\let\div\undefined
\DeclareMathOperator{\div}{div}
\newtheorem{theorem}{Theorem}[section]
\newtheorem{lemma}[theorem]{Lemma}
\newtheorem{proposition}[theorem]{Proposition}
\newtheorem{corollary}[theorem]{Corollary}
\theoremstyle{definition}
\newtheorem{definition}[theorem]{Definition}
\theoremstyle{remark}
\numberwithin{equation}{section}
\begin{document}

\setcounter{page}{1}

\title[Sobolev Trace Spaces and Dirichlet Problem]{ Variable Exponent Sobolev Trace Spaces and Dirichlet Problem in Axiomatic Nonlinear Potential Theory}

\author[MOHAMED BERGHOUT
]{MOHAMED BERGHOUT\\
	Ibn Tofail University –Kenitra – Morocco\\
	B.P.242-Kenitra 14000 ESEF.\\
	Mohamed.berghout@uit.ac.ma\\
moh.berghout@gmail.com}

%\address{$^{1}$Department of Mathematics and Computer Science, University of Hassan II, Faculty of sciences Ain Chock, B.P. 5366 Maarif, Casablanca, Morocco.}
%\email{\textcolor[rgb]{0.00,0.00,0.84}{abaalal@gmail.com}}

%\address{$^{}$Department of Mathematics and Computer Science, University of Hassan II, Faculty of sciences Ain Chock, B.P. 5366 Maarif, Casablanca, Morocco.
%\newline
%Tusi Mathematical Research Group (TMRG), Mashhad, Iran.}
%\email{\textcolor[rgb]{0.00,0.00,0.84}{moh.berghout@gmail.com}}

\dedicatory{\textbf {This paper is dedicated to my mother with deep estimate and  love} }

\subjclass[2010]{Primary 46E35, 31B15, 31D05, 31C45, 31C15, }

\keywords{Sobolev spaces with variable exponent, trace spaces, fine topology, axiomatic nonlinear potential theory, capacity, thinness.}

\date{Received: xxxxxx; Revised: yyyyyy; Accepted: zzzzzz.
\newline \indent $^{*}$Corresponding author}

\begin{abstract}
We give a news characterization of variable exponent Sobolev trace spaces. We construct The Perron-Weiner-Brelot operator in nonlinear harmonic space and
we give sufficient condition for which this operator is injective. 	 
\end{abstract} 
\maketitle
\section{\textbf {Introduction}}
This paper has dual goals. One goal is to use potential analysis to give a news characterization of variable exponent trace spaces. A second goal is to discuss the axiomatic nonlinear potential theory associated with the perturbed $p(.)$-Laplacian operator
\begin{equation*}
	\mathbf{\mathscr{L}_{p(.)}}u:=-\Delta_{p(.)}u+\mathscr{B}(.,u),	
\end{equation*}
where $p$ is a measurable function,  $\Delta_{p(.)}:=\div\left( \left| \nabla u \right|^{p(.)-2} \nabla u\right)$ is the $p(.)$-Laplacian operator and $\mathscr{B}$ is a given Carath\'{e}odory
functions satisfies some structural conditions. In the present introduction we motivate the questions that we address and we state the main results.

One of the most important motivations for the theory of Sobolev spaces with variable exponent comes from nonlinear potential theory. A typical task of nonlinear potential theory in variable exponent Sobolev spaces is to find a $p(.)$-harmonic function $ h $ on a bounded open set $ \Omega\subset \mathbb{R}^{n}$ which continuously extends the given boundary data $ f \in \mathscr{C}(\partial \Omega) $ so that the $ p(.) $-Laplace equation $\Delta_{p(.)}u=0 $
is satisfied on $ \Omega $  and $ u = f  $ on $ \partial \Omega $. This is the $ p(.)$-Dirichlet problem. Its reformulation in terms of Sobolev spaces is to extend $ f\in \mathscr{C}(\partial \Omega) $ to $ u\in \mathscr{C}(\overline{\Omega}) \cap \mathcal{W}^{1,p(.)}(\Omega) $, where $ \mathcal{W}^{1,p(.)}(\Omega) $ is the space of measurable functions $ u : \Omega\longrightarrow \mathbb{R}$ such that the modular 
$\rho_{1,p(.)}^{\Omega}(u):=\displaystyle\int_{\Omega}\left(\left|u(x) \right|^{p(x)}+\left|\nabla u(x) \right|^{p(x)}  \right)\ dx$ is finite, with $\nabla u$ is the distributional gradient. Already the theory of Sobolev spaces, one of the major themes of this paper, can serve as some kind of axiomatic approach to nonlinear potential theory, we refer to \cite{Heinonen}.   

Functions spaces with variable exponent have been intensely investigated in the recent years. One of such spaces is the Lebesgue and Sobolev spaces with variable exponent.
They were introduced by W. Orlicz in 1931 \cite{Orlicz}; their properties were further developed by H. Nakano as special cases of the theory of modular spaces \cite{Nakano}. In the ensuing decades they were primarily considered as important examples of modular spaces or the class of Musielak-Orlicz spaces. In the beginning these spaces had theoretical interest. Later, at the end of the last century, their first use beyond the function spaces theory itself, was in variational problems and studies of $p(.)$-Laplacian operator, which in its turn gave an essential impulse for the development of this theory. For more details on these spaces,  see  \cite{Hasto,kokivalish1,kokivalish2,FAN,kovacik}.

We now give the main results of the paper. First, we introduce some notations which will be observed in this paper. Throughout this paper we will use the following notations: $\mathbb{R}^{n}$ is the $n$-dimens\-ional Euclidean space, and $n\in \mathbb{N}$ always stands for the dimension of the space. $\Omega \subset \mathbb{R}^{n}$ is a open set equipped with the $n$-dimensional Lebesgue measure. For constants we use the letter $C$ whose value may change even within a string of estimates. The ball with radius $ r $ and center $ x\in \mathbb{R}^{n}$  will be denoted by 
$ B(x,r) $. The closure of a set $A$ is denoted by $\overline{A}$ and the  topological boundary of $ A $ is denoted by $ \partial A $. The complement of $ A $ will be denoted by $ A^{c}$. We use the usual convention of identifying two $\mu$-measurable function on $A$ (a.e. in $A$, for short) if they agree almost everywhere, i.e. if they agree up to a set of $\mu$-measure zero. The characteristic function of a set $ E \subset A $ will be denoted by $ \chi_{E} $.  The Lebesgue integral of a Lebesgue measurable function $f:\Omega\longrightarrow \mathbb{R}$, is defined in the standard way and denoted by $\displaystyle\int_{\Omega} f(x)\ dx$. We use the symbol $ := $ to define the left-hand side by the right-hand side. For measurable functions $u,v: \Omega\longrightarrow \mathbb{R}$, we set  $u^{+}:=\max \left\lbrace u,0\right\rbrace $, 
$u^{-}:=\max \left\lbrace -u,0\right\rbrace$, $ u\vee v:=\max\left\lbrace u,v \right\rbrace$ and $u\wedge v:=\min\left\lbrace u,v \right\rbrace$ . We denote by $L^{0}(\Omega)$ the space of all $\mathbb{R}$-valued measurable functions on $\Omega$. We denote by $\mathscr{C}(\Omega)$ the space of continuous functions on $\Omega$. By $\mathscr{C}_{c}(\overline{\Omega})$ we design the space of continuous functions on $\overline{\Omega}$ with compact support in $\overline{\Omega}$. We denote by $\mathscr{C}(\overline{\Omega})$ the space of uniformly continuous functions equipped with the supremum norm $\|f\|_{\infty}=\sup_{x\in\overline{\Omega}}|f(x)|$. By $\mathscr{C}^{k}(\overline{\Omega})$, $k\in \mathbb{N}$, we denote the space of all function $f$, such that $\partial_{\alpha} f :=\dfrac{\partial^{\left| \alpha \right|}f}{\partial^{\alpha_{1}}x_{1}........\partial^{\alpha_{n}}x_{n}} \in \mathscr{C}(\overline{\Omega})$ for all multi-index $\alpha = \left( \alpha_{1},\alpha_{2},...,\alpha_{n} \right)$,  $\left| \alpha \right|:=\alpha_{1}+\alpha_{2}+......+\alpha_{n} \leq k$. The space is equipped with the norm $\sup _{\left| \alpha \right|\leq k} {\left\| \partial_{\alpha} f\right\|}_{\infty}$, $\mathscr{C}^{\infty}(\overline{\Omega})=\bigcap_{k}\mathscr{C}^{k}(\overline{\Omega})$. The set of smooth functions in $\Omega$ is denoted by $\mathscr{C}^{\infty}(\Omega)$ - it consists of functions in $\Omega$ which are continuously differentiable arbitrarily many times. The set $\mathscr{C}^{\infty}_{0}(\Omega)$ is the subset of $\mathscr{C}^{\infty}(\Omega)$ of functions which have compact support.

Next, we introduce variable exponent Lebesgue and Sobolev spaces on open sets $ \Omega $ of $\mathbb{R}^{n}$. Let $p:  \Omega \longrightarrow \left[ 1,\infty \right)$ be a measurable function (called the variable exponent on $\Omega$). $ \mathcal{P}(\Omega)$ is already used as a set of variable exponent on $\Omega$. We set $p^{+}=\text{esssup}_{x\in \Omega} \ p(x)$ and $p^{-}=\text{essinf}_{x\in \Omega} \ p(x)$ and throughout this paper we assume that 
\begin{eqnarray*}
1<p^{-}\leq p(x)\leq p^{+} <\infty. 
\end{eqnarray*}
Notice that by \cite[ Proposition 4.1.7]{Hasto}, we can extend $p$ to all of $\mathbb{R}^{n}$.\\ 
 The variable exponent Lebesgue space $ L^{p(.)}(\Omega) $ is the family of the equivalence classes of functions defined by
\begin{equation*}
L^{p(.)}(\Omega):=\left\lbrace u\in L^{0}(\Omega):\rho_{p(.)}(\lambda u)=\int_{\Omega}\left|\lambda u(x)\right|^{p(x)} \ dx<\infty, \  
\text{for some} \ \lambda>0\right\rbrace. 
\end{equation*}
The function $
\rho _{p(.)}: L^{p(.) }(\Omega)\longrightarrow
\left[ 0,\infty \right)$ is called the modular of the space 
$L^{p(.)}(\Omega)$. We define a norm, the so-called Luxembourg norm, in this space by 
\begin{equation*}
\left\| u\right\|_{L^{p(.) }}=\inf \left\lbrace  \lambda >0:\rho
_{p(.) }\left( \frac{u}{\lambda }\right) \leq 1\right\rbrace.
\end{equation*}
As in the classical case, the dual variable exponent function $
p^{'}$ of $p$ is given by $\frac{1}{p(x) }+\frac{1}{{p^{'}(x)}}=1$ and dual space for $L^{p(.)}(\Omega) $ is $L^{p^{'}(.)}(\Omega)$.  
If $v\in
L^{p^{'}(.)}(\Omega)$ and $u\in L^{p(.)}(\Omega) $ then the following
H\"{o}lder's inequality holds:
\begin{equation*}
	\int_{\Omega } \left|uv\right| \ dx \leq 2\left\|   u\right\|_{L^{p(.) }(\Omega) }\left\| v\right\|_{L^{p^{'}(.)}(\Omega) }.
\end{equation*}
We define the variable exponent Sobolev space $ \mathcal{W}^{1,p(.)}(\Omega) $ as follows:
\begin{equation*}
\mathcal{W}^{1,p(.)}(\Omega):=\left\lbrace u\in L^{p(.)}(\Omega), \left| \nabla u \right|\in L^{p(.)}(\Omega)\right\rbrace. 
\end{equation*}
The space $ \mathcal{W}^{1,p(.)}(\Omega) $ is a Banach space endowed with the norm 
\begin{equation*}
\left\|  u\right\| _{ \mathcal{W}^{1,p(.)}(\Omega)}=\left\|  u\right\| _{ L^{p(.) }(\Omega)}+\left\| \nabla u\right\|_{  L^{p(.) }(\Omega)}.
\end{equation*}
We define a modular on $\mathcal{W}^{1,p(.)}(\Omega)$ by 
\begin{eqnarray*}
	\rho^{\Omega}_{1,p(.)}(u):=\int_{\Omega}\left| u(x)\right|^{p(x)}\ dx+\int_{\Omega}\left|\nabla u(x)\right|^{p(x)}\ dx,	
\end{eqnarray*}
when $\Omega=\mathbb{R}^{n}$ we denote it by 
$\rho_{1,p(.)}$. The definition of the space 
$L^{p(.)}(\mathbb{R}^{n})$ and $\mathcal{W}^{1,p(.)}(\mathbb{R}^{n})$ is analogous to $L^{p(.)}(\Omega)$ and $\mathcal{W}^{1,p(.)}(\Omega)$; one just changes every occurrence of $\Omega$ by $\mathbb{R}^{n}$.\\ 
The Sobolev space $ \mathcal{W}^{1,p(.)}_{0}(\Omega)$
with zero boundary values is the closure of the set of $ \mathcal{W}^{1,p(.)}(\Omega)$-functions with compact support, i.e.
\begin{equation*}
\left\lbrace u\in \mathcal{W}^{1,p(.)}(\Omega):u=u{\chi_{K}} \ \text{for a compact} \ K\subset\Omega  \right\rbrace 
\end{equation*}
in $ \mathcal{W}^{1,p(.)}(\Omega) $. When smooth functions are dense, we can also use the closure of $\mathscr{C}^{\infty}_{0}(\Omega)$ in $ \mathcal{W}^{1,p(.)}(\Omega)$. The space $ \mathcal{W}^{1,p(.)}_{0}(\Omega)$ is a closed Banach subspace of $ \mathcal{W}^{1,p(.)}(\Omega)$ which is separable if $  p^{+}<\infty $, and reflexive and uniformly convex if $ 1<p^{-}\leqslant  p^{+} <\infty$, we refer to \cite{Hasto3}.

A subspace $I$ of $\mathcal{W}^{1,p(.)}(\Omega)$ is called an ideal if for 
$u\in I,\ v\in \mathcal{W}^{1,p(.)}(\Omega)$, $ \left|v \right| \leqslant \left|u \right|$ a.e. implies that $v\in I$. The closed lattice ideals of the Sobolev spaces $\mathcal{W}^{1,p(.)}(\Omega)$ are those subspaces which consist of all functions which vanish on a prescribed set. To be precise, we have the following result:
\begin{theorem}
	Let $ p\in \mathcal{P}(\Omega) $ with $ 1<p^{-}\leqslant p^{+}<\infty$. Then the space $\mathcal{W}^{1,p(.)}_{0}(\Omega)$ is a closed ideal in $\mathcal{W}^{1,p(.)}(\Omega)$. Moreover, there exists a borel set 
	$\mathcal{B}$ such that $$\mathcal{W}^{1,p(.)}_{0}(\Omega)=\mathcal{W}^{1,p(.)}_{0}(\mathcal{B})$$. 
\end{theorem}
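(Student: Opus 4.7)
The statement contains two assertions: that $\mathcal{W}^{1,p(.)}_0(\Omega)$ is a closed ideal in $\mathcal{W}^{1,p(.)}(\Omega)$, and, granting this, that it coincides with $\mathcal{W}^{1,p(.)}_0(\mathcal{B})$ for some Borel $\mathcal{B}$. Closedness is immediate from the definition as a closure, so the genuine content of the first assertion is the ideal property, which I would prove by a truncation argument; the existence of $\mathcal{B}$ will then follow from the separability of $\mathcal{W}^{1,p(.)}_0(\Omega)$.

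For the ideal property, let $u \in \mathcal{W}^{1,p(.)}_0(\Omega)$ and $v \in \mathcal{W}^{1,p(.)}(\Omega)$ with $|v| \leq |u|$ a.e. Writing $v = v^+ - v^-$ reduces to the case $v \geq 0$. Pick compactly supported $u_k \in \mathcal{W}^{1,p(.)}(\Omega)$ with $u_k \to u$ in $\mathcal{W}^{1,p(.)}(\Omega)$ and set $v_k := v \wedge |u_k|$. Since $v_k \leq |u_k|$, each $v_k$ vanishes a.e.\ outside the support of $u_k$, so $v_k$ is compactly supported in $\Omega$ and hence lies in $\mathcal{W}^{1,p(.)}_0(\Omega)$. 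It remains to verify $v_k \to v$ in $\mathcal{W}^{1,p(.)}(\Omega)$. The pointwise identity $v \wedge |u| = v$ gives
$$|v_k - v| = (v - |u_k|)^+ \leq \bigl||u_k| - |u|\bigr| \leq |u_k - u|,$$
handling the $L^{p(.)}$-part. For the gradient I would apply the truncation chain rule to obtain $\nabla v_k - \nabla v = \chi_{\{v > |u_k|\}}(\nabla|u_k| - \nabla v)$ and conclude via the absolute continuity of the modular $\rho_{p(.)}$ combined with the fact that the measure of $\{v > |u_k|\}$ tends to zero along a subsequence; this is where the assumption $p^+ < \infty$ enters decisively.

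For the Borel set, I would use the separability of $\mathcal{W}^{1,p(.)}_0(\Omega)$. Choose a countable dense sequence $\{w_n\}$, fix Borel representatives, and put
$$\mathcal{B} := \bigcup_{n \in \mathbb{N}} \{x \in \mathbb{R}^n : w_n(x) \neq 0\}.$$
Then $\mathcal{B}$ is Borel and contained in $\Omega$ up to a Lebesgue-null set. Interpreting $\mathcal{W}^{1,p(.)}_0(\mathcal{B})$ as the closed ideal of functions in $\mathcal{W}^{1,p(.)}(\mathbb{R}^n)$ vanishing a.e.\ (or q.e., with respect to the $p(\cdot)$-capacity) on $\mathcal{B}^c$, the inclusion $\mathcal{W}^{1,p(.)}_0(\mathcal{B}) \subset \mathcal{W}^{1,p(.)}_0(\Omega)$ is immediate, while the reverse inclusion follows by extracting from $w_{n_k} \to u$ an a.e.-convergent subsequence and concluding that $u = 0$ a.e.\ on $\mathcal{B}^c$. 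The principal obstacle is not algebraic but analytic: the $L^{p(.)}$-continuity of the lattice operations on gradients. The pointwise identities for $\nabla(u \wedge v)$ and $\nabla|u|$ are classical, but passing to norm convergence relies on the absolute continuity of $\rho_{p(.)}$ afforded by $p^+ < \infty$. A secondary subtlety is pinning down the precise sense of ``vanishing on $\mathcal{B}^c$'' used to define $\mathcal{W}^{1,p(.)}_0(\mathcal{B})$ for a non-open Borel set, which is cleanest when formulated through quasi-continuous representatives and the variable-exponent capacity.
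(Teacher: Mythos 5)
Your argument for the ideal property is essentially the paper's own truncation argument: the paper takes $\varphi_n \in \mathscr{C}^\infty_0(\Omega)$ with $\varphi_n \to u$ and sets $v_n := v \wedge \varphi_n$, you take compactly supported $u_k \to u$ and set $v_k := v \wedge |u_k|$, and both then invoke the lemma that compactly supported Sobolev functions lie in $\mathcal{W}^{1,p(.)}_0(\Omega)$ together with continuity of the lattice operations. Your explicit reduction to $v \geq 0$ is a step the paper leaves implicit (and without which the paper's $v\wedge\varphi_n$ need not be compactly supported), and both proofs are equally brief about upgrading the pointwise identity $\nabla v_k - \nabla v = \chi_{\{v > |u_k|\}}(\nabla|u_k| - \nabla v)$ to $L^{p(.)}$-convergence. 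So on this half there is no substantive difference.

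For the Borel set $\mathcal{B}$, you take a genuinely different route from the paper, and the route has a gap. The paper checks that $\mathcal{W}^{1,p(.)}(\Omega)$ is a Banach lattice obeying a product estimate for bounded functions and then cites Stollmann's theorem on closed ideals in Dirichlet spaces. You instead build $\mathcal{B} = \bigcup_n \{w_n \neq 0\}$ from a countable dense family and assert that $\mathcal{W}^{1,p(.)}_0(\mathcal{B}) \subset \mathcal{W}^{1,p(.)}_0(\Omega)$ is immediate. That inclusion is in fact the entire nontrivial content of the statement. With the a.e.\ reading of ``$u$ vanishes on $\mathcal{B}^c$'' it is false in general: take $\Omega$ a disk in $\mathbb{R}^2$ with a radial slit removed; then $\mathcal{B}^c$ differs from the complement of the full disk only by a Lebesgue-null set, so a Sobolev function that jumps across the slit vanishes a.e.\ on $\mathcal{B}^c$ yet does not lie in $\mathcal{W}^{1,p(.)}_0(\Omega)$. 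To salvage the inclusion one must use quasi-continuous representatives and the q.e.\ reading, but that requires the characterization of $\mathcal{W}^{1,p(.)}_0(\Omega)$ via vanishing of quasi-continuous representatives on $\partial\Omega$ --- which in the paper is Theorem 3.5, proved \emph{from} the present theorem, so your argument as written would be circular. Stollmann's theorem, which the paper invokes, is precisely the general result that supplies this hard inclusion; treating it as ``immediate'' is where the proposal breaks down.
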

Let $p\in \mathcal{P}(\Omega)$. For a set $ E\subset \mathbb{R}^{n}$, we denote
\begin{eqnarray*}	
	\mathcal{A}_{p(.)}(E):=\left\lbrace u\in \mathcal{W}^{1,p(.)}(\mathbb{R}^{n}): u\geq 1 \ \textnormal{a.e. on a neighbourhood of } E \right\rbrace.    	
\end{eqnarray*}
The Sobolev $p(.)$-capacity of the set $E$ is the number defined by
\begin{eqnarray*}
	C_{p(.)}(E):=\inf_{u\in\mathcal{A}_{p(.)}(E)}\rho_{1,p(.)}(u).	   	
\end{eqnarray*}
In case $\mathcal{A}_{p(.)}(E)=\varnothing$, we set $C_{p(.)}(E)=\infty$. 

Now, we introduce an alternative to the Sobolev $p(.)$-capacity, in which the capacity of a set is taken relative to a open subset. For $ p\in \mathcal{P}(\Omega)$, we let
\begin{eqnarray*}
	\tilde{\mathcal{W}}^{1,p(.)}(\Omega):= \overline{{\mathcal{W}}^{1,p(.)}(\Omega)\cap \mathscr{C}_{c}(\overline{\Omega})}^{{\mathcal{W}}^{1,p(.)}(\Omega)}
\end{eqnarray*}
and
\begin{eqnarray*}
	\mathcal{R}^{ \overline{\Omega}}_{p(.)}(O):=\left\lbrace u\in \tilde{{\mathcal{W}}}^{1,p(.)}(\Omega): u\geq 1 \ \textnormal{ a.e. on}\ O \right\rbrace.    	
\end{eqnarray*}
\begin{definition}
Let $O\subset \overline{\Omega}$ be a relatively open set, that is, open with respect to the relative topology of $\overline{\Omega}$. 
We define the relative $p(.)$-capacity of $O$, with respect to $\Omega$, by
\begin{eqnarray*}
	C_{p(.)}^{\overline{\Omega}}(O):=\inf_{u\in\mathcal{R}^{ \overline{\Omega}}_{p(.)}(O)}{\rho}_{1,p(.)}^{\Omega}(u).	   	
\end{eqnarray*}
For any set $ E\subset \overline{\Omega}$,
\begin{eqnarray*} 
	C_{p(.)}^{\overline{\Omega}}(E)=\inf\left\lbrace C_{p(.)}^{\overline{\Omega}}(O): O \ \textnormal{relatively open in}\ \overline{\Omega} \ \textnormal{containing}\ E  \right\rbrace. 
\end{eqnarray*}
\end{definition}
\begin{definition}
	\item A set $P\subset\Omega$ is called $p(.)$-relatively polar if $C_{p(.)}^{\overline{\Omega}}(P)=0$.
	\item We say that a property holds on a  set $ A\subset\Omega$ $ p(.) $-relatively quasieverywhere ($ p(.) $-r.q.e., for short) if there exists a $p(.)$-relatively polar set $P\subset A$ such that the property holds everywhere on $ A\setminus P$.
	\item  A function $ u:\Omega\longrightarrow \mathbb{R}$ is said to be $p(.)$-relatively quasicontinuous ($ p(.) $-r.q.c., for short) if for every $ \varepsilon>0 $, there exists a relatively open set $ O_{\varepsilon}\subset\Omega $ such that $C_{p(.)}^{\Omega}(O_{\varepsilon})<\varepsilon$ and $ u $ is continuous on $ \Omega\setminus O_{\varepsilon}$.    
\end{definition}
In the following, we characterize $\mathcal{W}^{1,p(.)}_{0}(\Omega)$ in terms of the 
$p(.)$-relative capacity. 
\begin{theorem}
	Let $\Omega\subset \mathbb{R}^{n}$ be an open set. Then
	\begin{eqnarray*}
		\mathcal{W}^{1,p(.)}_{0}(\Omega)=\left\lbrace u\in \tilde{\mathcal{W}}^{1,p(.)}(\Omega):\ \tilde{u}=0 \ p(.)\textnormal{-r.q.e. on} \ \partial\Omega   \right\rbrace, 	
	\end{eqnarray*} 
	where $\tilde{u}$ denotes the relatively $p(.)$-quasicontinuous representative of $u$.
\end{theorem}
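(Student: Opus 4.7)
I would prove the two inclusions separately.

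\textbf{The inclusion $\mathcal{W}^{1,p(.)}_{0}(\Omega)\subset \{u\in\tilde{\mathcal{W}}^{1,p(.)}(\Omega):\tilde{u}=0\ p(.)\textnormal{-r.q.e.\ on }\partial\Omega\}$.} Given $u\in\mathcal{W}^{1,p(.)}_0(\Omega)$, pick a sequence $(u_k)\subset\mathcal{W}^{1,p(.)}(\Omega)$ with $u_k=u_k\chi_{K_k}$ for some compact $K_k\subset\Omega$ and $u_k\to u$ in $\mathcal{W}^{1,p(.)}(\Omega)$; mollifying inside $\Omega$, I may assume $u_k\in\mathscr{C}^{\infty}_{0}(\Omega)\subset\mathcal{W}^{1,p(.)}(\Omega)\cap\mathscr{C}_c(\overline{\Omega})$, so $u\in\tilde{\mathcal{W}}^{1,p(.)}(\Omega)$. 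Each $\tilde{u}_k=u_k$ is continuous and vanishes on $\partial\Omega$. The capacitary weak-type estimate $C_{p(.)}^{\overline{\Omega}}\bigl(\{|\tilde{v}|>\lambda\}\bigr)\le C\rho_{1,p(.)}^{\Omega}(v/\lambda)$ together with a Borel--Cantelli argument yields a subsequence with $\tilde{u}_{k_j}\to\tilde{u}$ $p(.)$-r.q.e.\ on $\overline{\Omega}$, forcing $\tilde{u}=0$ $p(.)$-r.q.e.\ on $\partial\Omega$.

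\textbf{The reverse inclusion.} Let $u\in\tilde{\mathcal{W}}^{1,p(.)}(\Omega)$ with $\tilde{u}=0$ $p(.)$-r.q.e.\ on $\partial\Omega$. Using the ideal property (Theorem 1.1) and writing $u=u^{+}-u^{-}$ I reduce to $u\ge 0$; truncating by $u\wedge M$ and sending $M\to\infty$ at the end reduces further to the bounded case. For each $\varepsilon>0$ I will construct $u_\varepsilon\in\mathcal{W}^{1,p(.)}_0(\Omega)$ with $u_\varepsilon\to u$ in $\mathcal{W}^{1,p(.)}(\Omega)$. Quasicontinuity of $\tilde{u}$ and the hypothesis provide a relatively open $O_\varepsilon\subset\overline{\Omega}$ with $C_{p(.)}^{\overline{\Omega}}(O_\varepsilon)<\varepsilon$ such that $\tilde{u}\big|_{\overline{\Omega}\setminus O_\varepsilon}$ is continuous and vanishes on $\partial\Omega\setminus O_\varepsilon$. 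Choose $\varphi_\varepsilon\in\mathcal{R}^{\overline{\Omega}}_{p(.)}(O_\varepsilon)$ with $0\le\varphi_\varepsilon\le 1$ and $\rho_{1,p(.)}^{\Omega}(\varphi_\varepsilon)<2\varepsilon$, and set
\[
u_\varepsilon:=(u-\varepsilon)^{+}\,(1-\varphi_\varepsilon).
\]
Its quasicontinuous representative vanishes $p(.)$-r.q.e.\ outside $F_\varepsilon:=\{\tilde{u}\ge\varepsilon\}\cap(\overline{\Omega}\setminus O_\varepsilon)$, which is closed in the compact set $\overline{\Omega}\setminus O_\varepsilon$ and disjoint from $\partial\Omega\setminus O_\varepsilon$, hence a compact subset of $\Omega$. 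Since $p(.)$-relatively polar sets have Lebesgue measure zero, $u_\varepsilon=u_\varepsilon\chi_{F_\varepsilon}$ a.e., so $u_\varepsilon\in\mathcal{W}^{1,p(.)}_0(\Omega)$.

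\textbf{Convergence and main obstacle.} Decompose $u-u_\varepsilon=\min(u,\varepsilon)+\varphi_\varepsilon(u-\varepsilon)^{+}$. The first summand tends to $0$ in $\mathcal{W}^{1,p(.)}(\Omega)$ by dominated convergence applied to $|\min(u,\varepsilon)|^{p(x)}$ and to $|\nabla u|^{p(x)}\chi_{\{0<u<\varepsilon\}}$. For the second, boundedness of $(u-\varepsilon)^{+}$ by $M$ and the product rule give
\[
\|\nabla(\varphi_\varepsilon(u-\varepsilon)^{+})\|_{L^{p(.)}(\Omega)}\le M\,\|\nabla\varphi_\varepsilon\|_{L^{p(.)}(\Omega)}+\|\varphi_\varepsilon\nabla u\|_{L^{p(.)}(\Omega)},
\]
whose first term vanishes with $\varepsilon$ by the modular estimate on $\varphi_\varepsilon$ and whose second vanishes by dominated convergence using $|\varphi_\varepsilon\nabla u|^{p(x)}\le|\nabla u|^{p(x)}$ and $\varphi_\varepsilon\to 0$ a.e.\ along a subsequence. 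The main obstacle I anticipate is step (i) of the reverse inclusion: passing rigorously from the quasicontinuous notion of ``support contained in $F_\varepsilon$'' to the essential-support condition $u_\varepsilon=u_\varepsilon\chi_{K}$ figuring in the definition of $\mathcal{W}^{1,p(.)}_0(\Omega)$. This requires the (standard but nontrivial) fact that $p(.)$-relatively polar sets are Lebesgue null, as well as the sharp pointwise interpretation of the inequality $\tilde{\varphi}_\varepsilon\ge 1$ on $O_\varepsilon$ (a neighbourhood condition in $\mathcal{R}^{\overline{\Omega}}_{p(.)}(O_\varepsilon)$) needed to locate the support of $u_\varepsilon$ cleanly inside a compact subset of $\Omega$.
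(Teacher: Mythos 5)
Your approach is genuinely different from the paper's. The paper proves this theorem by first invoking its Theorem 3.3 (that $\mathcal{W}^{1,p(.)}_0(\Omega)$ is a closed lattice ideal) together with the Stollmann-type abstract representation of closed ideals, obtaining directly that $\mathcal{W}^{1,p(.)}_0(\Omega)=\{u\in\tilde{\mathcal{W}}^{1,p(.)}(\Omega):\tilde{u}=0\ p(.)\text{-r.q.e.\ on }\mathcal{B}\}$ for some Borel set $\mathcal{B}$. It then shows, using smooth cut-offs $\varphi_i$ on an exhaustion $K_i\nearrow\Omega$, that $\mathcal{B}\cap\Omega$ is $p(.)$-relatively polar, so that $\mathcal{B}$ can be replaced by $\partial\Omega$; the reverse inclusion is a one-line observation that the right-hand side is itself a closed ideal containing $\mathscr{C}^{\infty}_0(\Omega)$. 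Your argument avoids the closed-ideal structure theorem entirely and replaces it with an explicit approximation: for the forward inclusion you rely on the capacitary weak-type estimate plus Borel–Cantelli to pass the boundary vanishing to the limit (essentially the content of the paper's Theorem 2.4 on r.q.e.\ convergence of subsequences), and for the reverse inclusion you construct $u_\varepsilon=(u-\varepsilon)^{+}(1-\varphi_\varepsilon)$ directly, using near-capacitary potentials $\varphi_\varepsilon$ of small relatively open sets. Yours is more constructive and elementary; the paper's buys brevity at the cost of importing Stollmann's result.

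There is, however, a genuine gap in your reverse inclusion that you did not flag. You argue that $F_\varepsilon=\{\tilde{u}\ge\varepsilon\}\cap(\overline{\Omega}\setminus O_\varepsilon)$ is a compact subset of $\Omega$ because it is ``closed in the compact set $\overline{\Omega}\setminus O_\varepsilon$.'' But the theorem only assumes $\Omega$ open, so $\overline{\Omega}$ need not be compact, and for unbounded $\Omega$ your $F_\varepsilon$ is merely a relatively closed subset of $\Omega$ with positive distance to $\partial\Omega$ locally, not obviously contained in a compact $K\subset\Omega$ as the definition of $\mathcal{W}^{1,p(.)}_0(\Omega)$ requires. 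To close this you need an extra step confining $\{\tilde{u}\ge\varepsilon\}$ to a bounded set up to a piece of small relative capacity — for instance, approximate $u$ in $\tilde{\mathcal{W}}^{1,p(.)}(\Omega)$ by a compactly supported continuous $v$ with $\|u-v\|<\delta$, apply the capacitary weak-type estimate to $u-v$ to show $C_{p(.)}^{\overline{\Omega}}\bigl(\{|\tilde{u}-\tilde{v}|\ge\varepsilon/2\}\bigr)$ is small, and absorb that set into $O_\varepsilon$; then $\{\tilde{u}\ge\varepsilon\}\setminus O_\varepsilon\subset\{\tilde{v}\ge\varepsilon/2\}$, which is bounded. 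The support-precision issue you flagged as the ``main obstacle'' is real but routine; the boundedness issue is the one that actually breaks the argument as written.
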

\begin{corollary}
	Let $\Omega\subset \mathbb{R}^{n}$ be an open set. Then the following assertions are equivalent.
	\begin{enumerate}
		\item $ C_{p(.)}^{\overline{\Omega} }(\partial\Omega)=0$;
		\item $\mathcal{W}^{1,p(.)} _{0}(\Omega)=\tilde{\mathcal{W}}^{1,p(.)}(\Omega)$.
	\end{enumerate}
\end{corollary}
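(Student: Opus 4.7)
The plan is to read the corollary off the preceding characterization theorem, which equates $\mathcal{W}^{1,p(.)}_{0}(\Omega)$ with the subspace of $\tilde{\mathcal{W}}^{1,p(.)}(\Omega)$ of functions whose quasicontinuous representative vanishes $p(.)$-r.q.e.\ on $\partial\Omega$. The direction $(1)\Rightarrow(2)$ is essentially tautological, while $(2)\Rightarrow(1)$ needs one small observation: $\tilde{\mathcal{W}}^{1,p(.)}(\Omega)$ contains enough test functions to detect any non-polar piece of $\partial\Omega$.

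For $(1)\Rightarrow(2)$: assuming $C_{p(.)}^{\overline{\Omega}}(\partial\Omega)=0$, the whole set $\partial\Omega$ is itself $p(.)$-relatively polar. Hence for any $u\in\tilde{\mathcal{W}}^{1,p(.)}(\Omega)$ we may take the exceptional set $P=\partial\Omega$, and the condition ``$\tilde{u}=0$ on $\partial\Omega\setminus P$'' holds vacuously. The theorem then gives $u\in\mathcal{W}^{1,p(.)}_{0}(\Omega)$, so $\tilde{\mathcal{W}}^{1,p(.)}(\Omega)\subset\mathcal{W}^{1,p(.)}_{0}(\Omega)$; the reverse inclusion is built into the statement of the theorem.

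For $(2)\Rightarrow(1)$: fix an arbitrary compact set $K\subset\partial\Omega$ and choose a cut-off $\varphi\in\mathscr{C}^{\infty}_{0}(\mathbb{R}^{n})$ with $\varphi\equiv 1$ on an open neighbourhood of $K$. The restriction $\varphi\!\restriction_{\Omega}$ belongs to $\mathcal{W}^{1,p(.)}(\Omega)\cap\mathscr{C}_{c}(\overline{\Omega})\subset \tilde{\mathcal{W}}^{1,p(.)}(\Omega)$. By hypothesis $(2)$ and the characterization theorem, there exists a $p(.)$-relatively polar set $P\subset\partial\Omega$ such that $\tilde{\varphi}(x)=0$ for every $x\in\partial\Omega\setminus P$. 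Since $\tilde{\varphi}=\varphi=1$ on $K$, this forces $K\subset P$, and monotonicity of capacity yields $C_{p(.)}^{\overline{\Omega}}(K)=0$. Writing $\partial\Omega=\bigcup_{k\in\mathbb{N}} K_{k}$ as a countable union of compacts (using $\sigma$-compactness of closed subsets of $\mathbb{R}^{n}$) and invoking countable subadditivity of the relative $p(.)$-capacity produces $C_{p(.)}^{\overline{\Omega}}(\partial\Omega)=0$, as required.

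The main obstacle is really just making sure that the cut-off $\varphi$ lands in $\tilde{\mathcal{W}}^{1,p(.)}(\Omega)$; this is immediate because $\varphi$ is smooth, compactly supported in $\mathbb{R}^{n}$ and hence continuous on $\overline{\Omega}$ with compact support there, so it belongs to the pre-closure set $\mathcal{W}^{1,p(.)}(\Omega)\cap\mathscr{C}_{c}(\overline{\Omega})$. Apart from this, the argument uses only monotonicity and countable subadditivity of $C_{p(.)}^{\overline{\Omega}}$, both of which are standard consequences of the infimum definition and the lattice properties of the modular $\rho_{1,p(.)}^{\Omega}$ in the reflexive range $1<p^{-}\le p^{+}<\infty$.
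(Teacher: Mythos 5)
Your proof is correct; the paper states this corollary without any explicit proof, treating it as an immediate consequence of the preceding characterization theorem, and your argument is exactly the natural derivation the author leaves implicit. The only small point you elide, but implicitly handle, is that passing from $\tilde\varphi=0$ $p(.)$-r.q.e.\ on $\partial\Omega$ to $\varphi=0$ $p(.)$-r.q.e.\ on $\partial\Omega$ uses uniqueness of the $p(.)$-relatively quasicontinuous representative up to polar sets (Theorem~\ref{quasicontinuous representative}), together with the fact that a continuous function is its own quasicontinuous representative; combined with monotonicity, countable subadditivity, and $\sigma$-compactness of the closed set $\partial\Omega$, this completes $(2)\Rightarrow(1)$, while $(1)\Rightarrow(2)$ is, as you say, vacuous once $\partial\Omega$ itself is polar.
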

The next theorem present another description of the space $\mathcal{W}^{1,p(.)} _{0}(\Omega)$.
\begin{theorem}
	Let $\Omega\subset \mathbb{R}^{n}$ be an open set. Suppose that 
	$F$ is a closed subset of $\mathbb{R}^{n}$ such that $\Omega=\mathbb{R}^{n}\setminus F$. If $ F $ is of $p(.) $-Sobolev capacity zero, then  
	\begin{eqnarray*}
		\mathcal{W}^{1,p(.)}_{0}(\Omega)=\left\lbrace u_{\arrowvert\Omega}: \  u\in \mathcal{W}^{1,p(.)}(\mathbb{R}^{n})\right\rbrace. 	
	\end{eqnarray*} 
\end{theorem}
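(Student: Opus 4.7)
The strategy is to prove the two inclusions separately, with only the inclusion $\supset$ requiring the capacity hypothesis on $F$.

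For $\subset$, I would start from a function $u\in\mathcal{W}^{1,p(.)}_{0}(\Omega)$ and pick a sequence $u_{k}\in\mathcal{W}^{1,p(.)}(\Omega)$ with $\mathrm{supp}(u_{k})\subset K_{k}\subset\Omega$ compact, converging to $u$ in $\mathcal{W}^{1,p(.)}(\Omega)$. Extending each $u_{k}$ by zero across $F$ produces $\tilde u_{k}\in\mathcal{W}^{1,p(.)}(\mathbb{R}^{n})$ (a standard fact: away from $K_{k}$ the extended function and its weak gradient vanish, so the modular on $\mathbb{R}^{n}$ agrees with that on $\Omega$). Since $(\tilde u_{k})$ is Cauchy in $\mathcal{W}^{1,p(.)}(\mathbb{R}^{n})$, it has a limit $\tilde u\in\mathcal{W}^{1,p(.)}(\mathbb{R}^{n})$ and clearly $\tilde u_{|\Omega}=u$.

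For $\supset$, let $u\in\mathcal{W}^{1,p(.)}(\mathbb{R}^{n})$. I would first reduce to the case where $u$ is bounded by using the truncations $u_{N}:=(-N)\vee(u\wedge N)$, which converge to $u$ in $\mathcal{W}^{1,p(.)}(\mathbb{R}^{n})$ (this uses $p^{+}<\infty$ and dominated convergence applied to the modular). Since $\mathcal{W}^{1,p(.)}_{0}(\Omega)$ is closed in $\mathcal{W}^{1,p(.)}(\Omega)$, it is enough to show $u_{|\Omega}\in\mathcal{W}^{1,p(.)}_{0}(\Omega)$ when $u\in L^{\infty}\cap\mathcal{W}^{1,p(.)}(\mathbb{R}^{n})$. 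Using $C_{p(.)}(F)=0$, choose admissible functions $\phi_{k}\in\mathcal{A}_{p(.)}(F)$ with $\rho_{1,p(.)}(\phi_{k})\to 0$; replacing $\phi_{k}$ by $0\vee(\phi_{k}\wedge 1)$, I may assume $0\leq\phi_{k}\leq 1$ and $\phi_{k}\equiv 1$ on an open neighbourhood $U_{k}$ of $F$, with $\|\phi_{k}\|_{\mathcal{W}^{1,p(.)}(\mathbb{R}^{n})}\to 0$. Pick cut-offs $\eta_{k}\in\mathscr{C}^{\infty}_{0}(\mathbb{R}^{n})$ with $0\leq\eta_{k}\leq 1$, $\eta_{k}\equiv 1$ on $B(0,k)$, and $|\nabla\eta_{k}|\leq 2/k$, and set
\begin{equation*}
v_{k}:=u\,(1-\phi_{k})\,\eta_{k}.
\end{equation*}
Because $1-\phi_{k}$ vanishes on $U_{k}\supset F$ and $\eta_{k}$ is compactly supported in $\mathbb{R}^{n}$, the function $v_{k}$ has compact support inside $\Omega$, and by the product rule it lies in $\mathcal{W}^{1,p(.)}(\mathbb{R}^{n})$ (hence its restriction lies in $\mathcal{W}^{1,p(.)}_{0}(\Omega)$).

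It then remains to show $v_{k}\to u$ in $\mathcal{W}^{1,p(.)}(\Omega)$. Writing $u-v_{k}=u(1-\eta_{k})+u\phi_{k}\eta_{k}$ and applying the product rule, each piece splits into a term where the derivative falls on $u$ (controlled since $(1-\eta_{k})\to 0$ pointwise and $\phi_{k}\to 0$ in measure, with an $L^{p(.)}$ dominating function, giving convergence to $0$ in $L^{p(.)}$ by variable-exponent dominated convergence) and a term where it falls on the cut-off ($\|u\|_{\infty}|\nabla\eta_{k}|\to 0$ and $\|u\|_{\infty}|\nabla\phi_{k}|\to 0$ in $L^{p(.)}$, using boundedness of $u$ and $\rho_{1,p(.)}(\phi_{k})\to 0$). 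The main obstacle I anticipate is exactly this last step: handling the term $u\nabla\phi_{k}$, where $u$ is not \emph{a priori} bounded and the capacity hypothesis only gives control on $\rho_{1,p(.)}(\phi_{k})$; this is why the preliminary reduction to bounded $u$ via truncation is essential, and why the interplay between the cut-off $\eta_{k}$ (providing compact support) and the capacitary potentials $\phi_{k}$ (absorbing $F$) must be kept separate.
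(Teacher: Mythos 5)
Your proposal is correct, but it follows a genuinely different route from the paper's. The paper's proof is essentially one line modulo prior machinery: it invokes the identification (from the reference \cite{Hasto3}, and implicitly Theorem 3.4 of this paper) of $\mathcal{W}^{1,p(.)}_0(\Omega)$ with the set of $u \in \mathcal{W}^{1,p(.)}(\mathbb{R}^n)$ whose $p(.)$-quasicontinuous representative $\tilde u$ vanishes $p(.)$-quasi-everywhere on $\mathbb{R}^n\setminus\Omega = F$, and then observes that $C_{p(.)}(F)=0$ makes this vanishing condition vacuous, so the zero-trace space already exhausts all restrictions. Your argument instead proves both inclusions by hand: extension by zero of compactly supported approximants for $\subset$, and for $\supset$ a reduction to bounded $u$ by truncation followed by the approximants $v_k = u(1-\phi_k)\eta_k$, where the truncated capacitary potentials $\phi_k$ of $F$ kill a neighbourhood of $F$ and the cut-offs $\eta_k$ provide compactness; the estimates you sketch for $\|u - v_k\|_{\mathcal{W}^{1,p(.)}(\Omega)}$ all go through (the step where $\nabla u\,\phi_k\eta_k \to 0$ needs a passage to an a.e.-convergent subsequence of $\phi_k$ before invoking dominated convergence, but that is harmless since convergence of a subsequence of $v_k$ already puts $u|_\Omega$ in the closed space $\mathcal{W}^{1,p(.)}_0(\Omega)$). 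The trade-off: the paper's proof is short but leans entirely on the quasicontinuous-representative characterization of the zero-trace space, whereas your construction is self-contained, makes the role of the capacity-zero hypothesis explicit through the potentials $\phi_k$, and separates cleanly the inclusion that is unconditional from the one that uses the hypothesis.
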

\begin{definition}
	Let $\Omega\subset \mathbb{R}^{n}$ be an open set. We say that $\Omega$ is $p(.)$-regular in capacity if $ C_{p(.)}(B(x,r)\setminus\Omega)>0$ for all $ x\in\partial\Omega $.	
\end{definition}	
\begin{definition}
	We say that an open set $\Omega\subset \mathbb{R}^{n}$ has the $p(.)$-zero property if $u\in \mathscr{C}(\partial\Omega)$ and $u=0\ p(.)$-q.e on $\partial\Omega$ implies that $u(x)=0$ for every 
	$x\in \partial\Omega$.
\end{definition}
The next theorem, says that our notions of $p(.)$-regularity in capacity and $p(.)$-zero property are good behavior under $p(.)$-capacities.

\begin{theorem}
	Let $ \Omega\subset \mathbb{R}^{n}$ be an open set. Then the following assertions are equivalent.
	\begin{enumerate}
		\item $\Omega$ has the $p(.)$-zero property;
		\item  Every function $u\in \mathcal{W}^{1,p(.)}_{0}(\Omega)\cap \mathscr{C}(\overline{\Omega})$ is zero everywhere on $\partial\Omega$;
		\item $\Omega$ is $p(.)$-regular in capacity;
		\item $ C_{p(.)}(B(x,r)\cap\partial\Omega)>0$ for all 
		$x\in \partial\Omega$;
		\item $ C_{p(.)}^{\overline{\Omega}}(B(x,r)\cap\partial\Omega)>0$ for all 
		$x\in \partial\Omega$;
		\item $u\in \mathscr{C}(\partial\Omega)$ and $u=0$ $p(.)$-r.q.e. on $ \partial\Omega $ implies that 
		$u(x)=0$ for every $x\in \partial\Omega$.  	 
	\end{enumerate}	 
\end{theorem}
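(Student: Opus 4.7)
My plan is to prove the six equivalences through the cycle $(1)\Rightarrow(2)\Rightarrow(6)\Rightarrow(5)\Rightarrow(4)\Rightarrow(1)$, and to treat $(3)\Leftrightarrow(4)$ as a separate geometric step. The organizing principle is that $(1)$--$(4)$ are phrased in terms of the Sobolev $p(.)$-capacity $C_{p(.)}$, while $(5)$ and $(6)$ use the relative capacity $C_{p(.)}^{\overline{\Omega}}$, and the bridge between the two sides is the elementary monotonicity
\[
C_{p(.)}^{\overline{\Omega}}(E)\le C_{p(.)}(E)\qquad\text{for every } E\subset\overline{\Omega},
\]
which follows because an admissible function for $C_{p(.)}(E)$, truncated by a smooth cutoff on $\overline{\Omega}$, restricts to an admissible function for $C_{p(.)}^{\overline{\Omega}}(E)$ whose modular can only decrease when integrated over $\Omega$ instead of $\mathbb{R}^{n}$. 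Consequently every $p(.)$-polar set is $p(.)$-relatively polar, so $p(.)$-q.e.\ vanishing is stronger than $p(.)$-r.q.e.\ vanishing.

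For $(1)\Rightarrow(2)$ I take $u\in\mathcal{W}^{1,p(.)}_{0}(\Omega)\cap\mathscr{C}(\overline{\Omega})$ and invoke the standard zero-extension property: $u$ extends by zero to an element of $\mathcal{W}^{1,p(.)}(\mathbb{R}^{n})$ whose $p(.)$-quasicontinuous representative vanishes $p(.)$-q.e.\ on $\partial\Omega$; continuity identifies this representative with $u$ on $\overline{\Omega}$, so $u|_{\partial\Omega}$ is a continuous function vanishing $p(.)$-q.e., and $(1)$ delivers $u=0$ on $\partial\Omega$. For $(2)\Rightarrow(6)$ I argue by contrapositive: assuming $u\in\mathscr{C}(\partial\Omega)$ with $u=0$ $p(.)$-r.q.e.\ and $u(x_{0})\neq 0$, continuity yields $r>0$ with $\partial\Omega\cap B(x_{0},r)\subset\{u\neq 0\}$, so this set has relative capacity zero. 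A smooth cutoff $\varphi$ with $\varphi(x_{0})=1$ and $\operatorname{supp}\varphi\subset B(x_{0},r/2)$ then lies in $\tilde{\mathcal{W}}^{1,p(.)}(\Omega)$, vanishes $p(.)$-r.q.e.\ on $\partial\Omega$, and by Theorem~1.2 belongs to $\mathcal{W}^{1,p(.)}_{0}(\Omega)\cap\mathscr{C}(\overline{\Omega})$; since $\varphi(x_{0})\neq 0$ this contradicts $(2)$. The step $(6)\Rightarrow(5)$ is the mirror argument, using the tent function $u(x):=\max(0,1-2|x-x_{0}|/r)$ when $C_{p(.)}^{\overline{\Omega}}(B(x_{0},r)\cap\partial\Omega)=0$. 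The step $(5)\Rightarrow(4)$ is the capacity comparison displayed above, and $(4)\Rightarrow(1)$ is an analogous direct continuity argument on the Sobolev side.

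The genuinely new input is in $(3)\Leftrightarrow(4)$. The direction $(4)\Rightarrow(3)$ is just monotonicity of $C_{p(.)}$ applied to $B(x,r)\cap\partial\Omega\subset B(x,r)\setminus\Omega$. For $(3)\Rightarrow(4)$, which I expect to be the main obstacle, the key geometric fact is that, since $p^{-}>1$, any closed set of $C_{p(.)}$-capacity zero has vanishing $(n-1)$-dimensional Hausdorff measure and therefore cannot disconnect an open ball. Supposing $C_{p(.)}(B(x_{0},r)\cap\partial\Omega)=0$, the set $B(x_{0},r)\setminus\partial\Omega$ is therefore connected; but it decomposes as the disjoint union of the open sets $B(x_{0},r)\cap\Omega$ and $B(x_{0},r)\cap\operatorname{int}(\Omega^{c})$, and the former is nonempty because $x_{0}\in\partial\Omega$, forcing the latter to be empty. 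Hence $B(x_{0},r)\setminus\Omega=B(x_{0},r)\cap\partial\Omega$ has capacity zero, contradicting $(3)$. Apart from invoking this non-separation property, all remaining implications reduce to cutoff constructions combined with Theorem~1.2 and the capacity comparison.
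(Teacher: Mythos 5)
Your proof is correct, but your route differs from the paper's in a meaningful way, concentrated in the implication $(3)\Rightarrow(4)$. The paper closes a single cycle $(1)\Rightarrow(2)\Rightarrow(3)\Rightarrow(4)\Rightarrow(5)\Rightarrow(6)\Rightarrow(1)$, and its proof of $(3)\Rightarrow(4)$ is constructive and self-contained: assuming $C_{p(.)}(B(x_0,r)\cap\partial\Omega)=0$, it uses countable subadditivity to force $C_{p(.)}(B(x_0,r)\setminus\overline{\Omega})>0$, chooses small balls $B(x_1,\varepsilon_1)\subset B(x_0,r)\setminus\overline{\Omega}$ and $B(x_2,\varepsilon_2)\subset B(x_0,r)\cap\Omega$, builds a cutoff $v\in\mathcal{W}^{1,p(.)}_0(\Omega)$ (extended by zero) whose gradient vanishes a.e.\ on a ball containing both, and derives a contradiction from $v$ being simultaneously $1$ on $B(x_2,\varepsilon_2)$ and $0$ on $B(x_1,\varepsilon_1)$ while being a.e.\ constant there. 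You instead run the cycle $(1)\Rightarrow(2)\Rightarrow(6)\Rightarrow(5)\Rightarrow(4)\Rightarrow(1)$ and attach $(3)\Leftrightarrow(4)$, proving $(3)\Rightarrow(4)$ by a topological non-separation argument: capacity-null implies $\mathcal{H}^{n-1}$-null, and an $\mathcal{H}^{n-1}$-null set cannot disconnect a ball, so $B(x_0,r)\setminus\partial\Omega$ is connected and the open component $B(x_0,r)\cap\operatorname{int}(\Omega^c)$ must be empty, giving $B(x_0,r)\setminus\Omega=B(x_0,r)\cap\partial\Omega$. Both arguments are sound, but yours imports two nontrivial external facts that the paper avoids: the comparison $C_{p(.)}(E)=0\Rightarrow\mathcal{H}^{n-1}(E)=0$ (which in the variable-exponent setting rests on a capacity-to-Hausdorff-dimension estimate via $p^-$, cf.\ the references \cite{Hasto,Hasto4}), and the non-separation theorem for $\mathcal{H}^{n-1}$-null sets. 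In exchange your decomposition is arguably cleaner conceptually, isolating $(3)\Leftrightarrow(4)$ as a purely geometric statement and keeping the cutoff constructions on the relative-capacity side $(2)\Rightarrow(6)\Rightarrow(5)$, whereas the paper interleaves the two capacities through the whole cycle. One small caution: when you invoke the non-separation fact, be explicit that $\mathcal{H}^{n-1}$-nullity of the removed set (not merely Hausdorff dimension $\le n-1$) is what powers the Fubini-type path-avoidance argument, since $\mathcal{H}^{n-1}(E)=0$ does not by itself force $\dim_H E<n-1$.
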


 For 
$f\in \mathscr{C}(\partial\Omega)$, we consider the Dirichlet problem:
\begin{equation*}	
\left\{
\begin{array}{rl}	
\mathbf{\mathscr{L}_{p(.)}}u:=-\Delta_{p(.)}u+\mathscr{B}(.,u)=0 & \text{in}\ \Omega; \\
u=f & \text{on}\ \partial\Omega. 
\end{array}
\right.
\end{equation*}
 We define the  
$\mathcal{H}_{\mathscr{L}_{p(.)}}$-harmonic sheaf as follows:
\begin{equation*}
\mathcal{H}_{\mathscr{L}_{p(.)}}(\Omega)=
\left\lbrace u\in \mathscr{C}(\Omega):\mathscr{L}_{p(.)}u=0 \right\rbrace. 
\end{equation*}
We construct the Perron-Wiener-Brelot operator
\begin{equation*}
f\mapsto {\mathscr{L}^{\Omega}_{p(.),f}} 	
\end{equation*}
from a set of real extended functions $ f : \partial\Omega \longrightarrow \mathbb{R}$ to the nonlinear harmonic space $ (\Omega,\mathcal{H}_{\mathscr{L}_{p(.)}} )$ of the $ \mathscr{L}_{p(.)}$-harmonic functions in $ \Omega $. 

The following theorem suggests that the mapping $ f\mapsto {\mathscr{L}^{\Omega}_{p(.),f}}$ from $ \mathscr{C}(\partial\Omega) $ into $ \mathcal{H}_{\mathscr{L}_{p(.)}}(\Omega) $ is injective whenever $ \Omega $ is $p(.)$-regular in capacity.  
\begin{theorem}
	Let $\Omega\subset \mathbb{R}^{n}$ be a bounded open set with non empty boundary $ \partial\Omega$. Assume that $ \Omega $ is $p(.)$-regular in capacity. Then the mapping $ f\mapsto \mathscr{L}^{\Omega}_{p(.),f}$ from $ \mathscr{C}(\partial\Omega) $ into $ \mathcal{H}_{\mathscr{L}_{p(.)}}(\Omega) $ is injective.
\end{theorem}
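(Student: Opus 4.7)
My plan is to reduce injectivity to a boundary attainment statement: under the $p(.)$-regular-in-capacity hypothesis, I will show that for every $h\in\mathscr{C}(\partial\Omega)$ and every $\xi\in\partial\Omega$,
\[
\lim_{\substack{x\to\xi\\ x\in\Omega}}\mathscr{L}^{\Omega}_{p(.),h}(x)=h(\xi).
\]
Once this is in hand, applying it to any $f,g\in\mathscr{C}(\partial\Omega)$ with $\mathscr{L}^{\Omega}_{p(.),f}=\mathscr{L}^{\Omega}_{p(.),g}$ in $\Omega$ and passing to the boundary limit at each $\xi\in\partial\Omega$ yields $f(\xi)=g(\xi)$, which is precisely the required injectivity.

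To prove the boundary attainment I would proceed in two stages. First, I would pick a Tietze-type extension $\tilde h\in\mathcal{W}^{1,p(.)}(\Omega)\cap\mathscr{C}(\overline{\Omega})$ of $h$ and identify $u_h:=\mathscr{L}^{\Omega}_{p(.),h}$ as the unique $\mathscr{L}_{p(.)}$-harmonic function in $\Omega$ lying in the affine class $\tilde h+\mathcal{W}^{1,p(.)}_{0}(\Omega)$. This identification is the variational counterpart of the Perron construction: the upper and lower Perron classes of $\mathscr{L}_{p(.)}$-super/subharmonic functions sandwich the energy minimizer for the functional associated with $\mathscr{L}_{p(.)}$ among competitors with the prescribed trace. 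Second, to upgrade the Sobolev-level identity $u_h-\tilde h\in\mathcal{W}^{1,p(.)}_{0}(\Omega)$ to pointwise convergence at every $\xi$, I would invoke a capacity-based barrier argument: the hypothesis $C_{p(.)}(B(\xi,r)\setminus\Omega)>0$ for all $r>0$ provides enough mass in the complement of $\Omega$ to produce an $\mathscr{L}_{p(.)}$-supersolution barrier $w$ at $\xi$, and a comparison between $u_h$ and $h(\xi)\pm\varepsilon\pm\eta w$ will give both $\limsup_{x\to\xi}u_h(x)\leq h(\xi)$ and $\liminf_{x\to\xi}u_h(x)\geq h(\xi)$.

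Combining the two stages yields a continuous extension of $u_h$ to $\overline{\Omega}$, so that $u_h-\tilde h\in\mathcal{W}^{1,p(.)}_{0}(\Omega)\cap\mathscr{C}(\overline{\Omega})$; at this point condition (2) of the preceding theorem of equivalences forces $u_h-\tilde h$ to vanish everywhere on $\partial\Omega$, which is exactly the boundary attainment claimed at the outset. The main obstacle is the barrier construction in the variable-exponent nonlinear setting with the lower-order perturbation $\mathscr{B}(\cdot,u)$: one has to verify that the structural conditions on $\mathscr{B}$ preserve the comparison principle on which the barrier argument relies, and that the positivity of the relative $p(.)$-capacity in every small ball around $\xi$ indeed suffices to produce a supersolution barrier. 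This is essentially a variable-exponent adaptation of the classical Maz'ya--Wiener criterion, and is the step where the axiomatic nonlinear potential theory developed in the paper carries most of the load; everything else is assembly of the preceding machinery.
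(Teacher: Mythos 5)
Your plan takes a genuinely different route from the paper, and the route has a gap that is not just hard to fill but appears to be impossible to fill with the stated hypotheses. You propose to establish pointwise boundary attainment $\lim_{x\to\xi}\mathscr{L}^{\Omega}_{p(.),h}(x)=h(\xi)$ at \emph{every} $\xi\in\partial\Omega$ via a barrier argument whose only input is that $C_{p(.)}(B(\xi,r)\setminus\Omega)>0$ for all $r>0$. This is far too weak to produce a barrier: positivity of the complement's capacity near $\xi$ is not a quantitative thickness statement, and, as you yourself note, what the barrier method actually requires is a Wiener-type integral condition. The two are not equivalent --- in the classical $p$-Laplace theory one can have domains whose complement has positive capacity in every ball around a boundary point $\xi$ (so the domain is regular in capacity) and yet $\xi$ is an irregular boundary point, where the Perron solution fails to attain the boundary value. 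So the central step of your argument would fail, and the later appeal to condition $(2)$ of the equivalence theorem is at best redundant (if the barrier argument gave attainment, you already have $u_h-\tilde h=0$ on $\partial\Omega$) and at worst circular (you cannot use it to \emph{produce} the continuous extension whose existence it presupposes).

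The paper's own proof is much softer and uses the $p(.)$-regularity hypothesis in a fundamentally different way. It cites Theorem \ref{Direchle solution} (imported from \cite{BA3}) to assert outright that $\mathscr{L}^{\Omega}_{p(.),f}$ and $\mathscr{L}^{\Omega}_{p(.),g}$ lie in $\mathscr{C}(\overline\Omega)$ and agree with $f,g$ on the boundary $p(.)$-r.q.e.; from $\mathscr{L}^{\Omega}_{p(.),f}=\mathscr{L}^{\Omega}_{p(.),g}$ it then concludes $f-g=0$ $p(.)$-r.q.e. on $\partial\Omega$. At this point it invokes Theorem \ref{equivalece p capacity}, specifically the implication from $p(.)$-regularity in capacity to the $p(.)$-zero property (item $(6)$): a continuous function on $\partial\Omega$ that vanishes $p(.)$-r.q.e. must vanish everywhere. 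That upgrades $f-g=0$ $p(.)$-r.q.e. to $f=g$ everywhere. In other words, the regularity-in-capacity hypothesis is used only for this soft topological/capacitary promotion of a quasi-everywhere identity to an everywhere identity, never for constructing barriers or establishing pointwise boundary limits. If you wish to salvage your approach, you must either replace the capacity-in-a-ball hypothesis by a genuine Wiener criterion (which would prove a stronger statement than the one at hand), or abandon the barrier route and rely on the continuity statement of Theorem \ref{Direchle solution} plus the $p(.)$-zero property, exactly as the paper does.
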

\begin{corollary}
	Let $\Omega\subset \mathbb{R}^{n}$ be a bounded open set with non empty boundary $ \partial\Omega$. Let $\mathscr{R}(\partial\Omega)$ the class of all real valued resolutive function on 
	$\partial\Omega$ and $f,g\in  \mathscr{R}(\partial\Omega)$. Suppose that $ \Omega $ is $p(.)$-regular in capacity. If 
	$\mathscr{L}^{\Omega}_{p(.),f}=\mathscr{L}^{\Omega}_{p(.),g}$, then the set $ \left\lbrace x\in \partial \Omega: f(x)\neq g(x)\right\rbrace$ is negligible.   
\end{corollary}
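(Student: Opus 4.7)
The plan is to leverage the preceding injectivity theorem, which controls the PWB map only on continuous boundary data, and to extend it to the resolutive class $\mathscr{R}(\partial\Omega)$ via the approximation built into the very definition of resolutivity. Each $f\in\mathscr{R}(\partial\Omega)$ is, by construction, sandwiched between continuous functions $\varphi_{n},\psi_{n}\in\mathscr{C}(\partial\Omega)$ with $\varphi_{n}\leq f\leq\psi_{n}$ and $\|\mathscr{L}^{\Omega}_{p(.),\psi_{n}}-\mathscr{L}^{\Omega}_{p(.),\varphi_{n}}\|_{\infty}\to 0$; similarly pick $\varphi'_{n}\leq g\leq\psi'_{n}$. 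Taking the joint envelopes $\Phi_{n}:=\varphi_{n}\vee\varphi'_{n}$ and $\Psi_{n}:=\psi_{n}\wedge\psi'_{n}$ in $\mathscr{C}(\partial\Omega)$, both $f$ and $g$ then sit inside $[\Phi_{n},\Psi_{n}]$ pointwise on $\partial\Omega$.

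Using the comparison principle in the nonlinear harmonic space $(\Omega,\mathcal{H}_{\mathscr{L}_{p(.)}})$, which plays the role of the missing linearity, together with the hypothesis $\mathscr{L}^{\Omega}_{p(.),f}=\mathscr{L}^{\Omega}_{p(.),g}$, I would obtain $\mathscr{L}^{\Omega}_{p(.),\Phi_{n}}\leq\mathscr{L}^{\Omega}_{p(.),f}=\mathscr{L}^{\Omega}_{p(.),g}\leq\mathscr{L}^{\Omega}_{p(.),\Psi_{n}}$, and passing to the limit forces the PWB solutions of $\Phi_{n}$ and $\Psi_{n}$ to converge uniformly on $\Omega$ to the same $\mathscr{L}_{p(.)}$-harmonic function. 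On the boundary, the disagreement set satisfies $\{x\in\partial\Omega:f(x)\neq g(x)\}\subseteq\bigcap_{n}\{x\in\partial\Omega:\Phi_{n}(x)<\Psi_{n}(x)\}$, since outside the right-hand side one has $f(x)=g(x)=\Phi_{n}(x)=\Psi_{n}(x)$. Applying the preceding injectivity theorem to the continuous pairs $(\Phi_{n},\Psi_{n})$ then shows that each of the exceptional sets is negligible, being the locus on which two continuous functions with vanishingly close PWB solutions disagree. Countable subadditivity of the class of negligible sets finishes the proof.

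\textbf{Main obstacle.} The absence of linearity in $\mathscr{L}_{p(.)}$ prevents the classical shortcut of setting $h:=f-g$ and reading off negligibility from $\mathscr{L}^{\Omega}_{p(.),h}=0$, since the latter identity simply does not make sense in the nonlinear Perron--Wiener--Brelot theory. The argument must therefore be carried out via the lattice operations $\vee$ and $\wedge$, together with a careful quantitative use of the comparison principle to squeeze $f$ and $g$ simultaneously between continuous envelopes. Translating the preceding injectivity theorem, which is a qualitative pointwise statement on $\mathscr{C}(\partial\Omega)$, into a quantitative negligibility estimate on the disagreement set is the delicate step, and it is where the hypothesis that $\Omega$ is $p(.)$-regular in capacity is used.
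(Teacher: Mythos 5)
The paper states this corollary without proof, so there is nothing to compare against on that side; the question is whether your argument itself holds up, and unfortunately it has two genuine gaps.

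First, the joint-envelope step is wrong. You start from $\varphi_n \le f \le \psi_n$ and $\varphi'_n \le g \le \psi'_n$ and set $\Phi_n := \varphi_n \vee \varphi'_n$, $\Psi_n := \psi_n \wedge \psi'_n$, then claim both $f$ and $g$ lie in $[\Phi_n,\Psi_n]$. But $\Phi_n \le f$ requires $\varphi'_n \le f$, and you only know $\varphi'_n \le g$; if $f<g$ somewhere then $\varphi'_n$ can easily sit strictly above $f$ there, so the sandwich fails. Symmetrically $f \le \Psi_n$ needs $f \le \psi'_n$, which is likewise not given. Since the containment $\{f\neq g\}\subseteq\bigcap_n\{\Phi_n<\Psi_n\}$ hangs entirely on that sandwich, the whole inclusion collapses.

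Second, even granting the sandwich, the step ``applying the preceding injectivity theorem to $(\Phi_n,\Psi_n)$ shows each exceptional set is negligible'' is a non sequitur. The injectivity theorem is a qualitative statement about exact equality: $\mathscr{L}^{\Omega}_{p(.),\Phi_n}=\mathscr{L}^{\Omega}_{p(.),\Psi_n}$ forces $\Phi_n=\Psi_n$ \emph{everywhere} on $\partial\Omega$. Here you only have that the two PWB solutions are uniformly close, not equal, and the theorem says nothing about the size of $\{\Phi_n<\Psi_n\}$ in that regime; the word ``negligible'' does not occur in its conclusion at all. Turning ``close PWB solutions'' into ``small disagreement set'' would need a quantitative estimate in terms of harmonic-measure-type negligibility, which is precisely the content one would have to supply and which your proposal assumes rather than proves. (A third, smaller issue: you invoke countable subadditivity of negligible sets, but the set you built is an intersection, not a union; once a single $\{\Phi_n<\Psi_n\}$ were shown negligible the intersection would be negligible by monotonicity, so the subadditivity appeal is misplaced.) A correct route would likely work directly with the upper and lower classes defining resolutivity for $f$ and $g$ and with the standard fact that the symmetric difference of two resolutive boundary functions with the same PWB solution is negligible in the harmonic-measure sense, using $p(.)$-regularity in capacity only where the injectivity theorem enters via Theorem 3.10, not as a device to upgrade approximate equality.
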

We now give the structure of the paper. More detailed descriptions appear at the beginnings of the sections.

Section $ 2 $ gives sufficient conditions for the existence of a $p(.)$-relatively quasicontinuous representative for functions in $\tilde{\mathcal{W}}^{1,p(.)}(\Omega)$ and studies $p(.)$-fine continuity. 

Section $ 3 $ defines $p(.)$-regularity in capacity and $ p(.)$-zero property. This is used to give a news characterization of variable exponent Sobolev trace spaces modulo the technical results of Section $ 2 $. The Perron-Weiner-Brelot operator in nonlinear harmonic spaces is constructed in Section $ 4 $, which study the injectivity  of this operator.
\section{\textbf{ $p(.)$-fine topology of equivalence class of Sobolev functions}}
Fine topology  which dates back to Henri Cartan \cite{Cartan} plays a central role in the theory of Sobolev spaces and in potential theory. When finding a representative with certain continuity properties in an equivalence class of almost everywhere equal functions, the Euclidean topology is not relevant in general, instead one can use the fine topology. In addition, fine topology is used much more extensively in nonlinear potential theory  literature than the Euclidean topology, which is a reason why its theory has been more developed. For references to nonlinear and fine nonlinear potential theory, we refer to \cite{Adams,Bjorn 1,Evans,Fuglede,Hedberg,Heinonen 1,Heinonen,Kilpel 2}. 

\subsection{\textbf{ $p(.)$-relative quasicontinuous representative  of equivalence class of Sobolev functions}}
Sobolev functions are defined only up to Lebesgue measure zero and thus it is not always clear how to use their point-wise properties. But one can also think of some representative of this equivalence class, perhaps defined at all points outside a set of measure zero. However, if continuous functions are dense in the variable exponent Sobolev space, then each
function in $ \mathcal{W}^{1,p(.)}(\mathbb{R}^{n}) $ has a $p(.)$-quasicontinuous representative, see \cite[ Theorem 5.2]{Hasto2}. Even when the Hardy-Littlewood
maximal operator  $\mathscr{M}: L^{p(.)}(\mathbb{R}^{n})\longrightarrow
L^{p(.)}(\mathbb{R}^{n})$ is bounded, the \cite[ Theorem 5.2]{Harjulehto}
tels us that every equivalence class contains a H\"{o}lder continuous function. Moreover, there are trace theorems that give the existence of distinguished elements in the equivalence class, so that restrictions to some sets of zero Lebesgue measure, we refer to \cite{Hasto,Diening and Hasto}.

In this subsection, we show that the equivalent class of Sobolev functions  in $ \tilde{{\mathcal{W}}}^{1,p(.)}(\Omega)$ are relatively 
$ p(.)$-quasicontinuous. This is a concept with deep roots in nonlinear potential theory associated with variable exponent spaces. For further applications of the $p(.)$-quasicontinuity, we refer the reader to \cite{Harjulehto 1} and the references therein.
\subsubsection{\textbf{ $p(.)$-Capacities}}
We begin by recalling the definition of the $p(.)$-Sobolev capacity appearing in the existing literature. We refer to \cite{Hasto,Hasto2,Hasto3}.\\
Let $p\in \mathcal{P}(\Omega)$. For a set $ E\subset \mathbb{R}^{n}$, we denote
\begin{eqnarray*}	
	\mathcal{A}_{p(.)}(E):=\left\lbrace u\in \mathcal{W}^{1,p(.)}(\mathbb{R}^{n}): u\geq 1 \ \textnormal{a.e. on a neighbourhood of } E \right\rbrace.    	
\end{eqnarray*}
The Sobolev $p(.)$-capacity of the set $E$ is the number defined by
\begin{eqnarray*}
	C_{p(.)}(E):=\inf_{u\in\mathcal{A}_{p(.)}(E)}\rho_{1,p(.)}(u).	   	
\end{eqnarray*}
In case $\mathcal{A}_{p(.)}(E)=\varnothing$, we set $C_{p(.)}(E)=\infty$.  Under assumption $ 1 < p^{-} \leq p^{+} < \infty $ the Sobolev $p(.)$-capacity  is an outer measure and a Choquet capacity. 

Next we present another version of the relative $ p(.)$-capacity. For $ p\in \mathcal{P}(\Omega)$, we let
\begin{eqnarray*}
	\tilde{{\mathcal{W}}}^{1,p(.)}(\Omega):= \overline{{\mathcal{W}}^{1,p(.)}(\Omega)\cap \mathscr{C}_{c}(\overline{\Omega})}^{{\mathcal{W}}^{1,p(.)}(\Omega)}
\end{eqnarray*}
and
\begin{eqnarray*}
	\mathcal{R}^{ \overline{\Omega}}_{p(.)}(O):=\left\lbrace u\in \tilde{{\mathcal{W}}}^{1,p(.)}(\Omega): u\geq 1 \ \textnormal{ a.e. on}\ O \right\rbrace.    	
\end{eqnarray*}

\begin{definition}
Let $O\subset \overline{\Omega}$ be a relatively open set, that is, open with respect to the relative topology of $\overline{\Omega}$. 
We define the relative $p(.)$-capacity of $O$, with respect to $\Omega$, by
\begin{eqnarray*}
	C_{p(.)}^{\overline{\Omega}}(O):=\inf_{u\in\mathcal{R}^{ \overline{\Omega}}_{p(.)}(O)}{\rho}_{1,p(.)}^{\Omega}(u).	   	
\end{eqnarray*}
For any set $ E\subset \overline{\Omega}$,
\begin{eqnarray*} 
	C_{p(.)}^{\overline{\Omega}}(E)=\inf\left\lbrace C_{p(.)}^{\overline{\Omega}}(O): O \ \textnormal{relatively open in}\ \overline{\Omega} \ \textnormal{containing}\ E  \right\rbrace. 
\end{eqnarray*}
\end{definition}
Notice that our $p(.)$-relative capacity  $ C_{p(.)}^{\overline{\Omega}}$ defined with the space $\tilde{{\mathcal{W}}}^{1,p(.)}(\Omega)$ slightly differs from the one introduced in \cite{Hasto,Hasto4}. However, the resulting capacities are equivalent, and hence for our purposes the difference is irrelevant.

In this paper, the motivation to use the $p(.)$-relative capacity  $ C_{p(.)}^{\overline{\Omega}}$ is to study
fine properties of equivalence class of Sobolev functions in $\tilde{{\mathcal{W}}}^{1,p(.)}(\Omega)$ and to give a news characterization of variable exponent Sobolev trace spaces with application to  Dirichlet problem in nonlinear harmonic space.

Recall from \cite{BA} that the set function $E\mapsto C_{p(.)}^{\overline{\Omega}}$ has the following properties:
 \begin{itemize}
 	\item $(C_{1})$ $C_{p(.)}^{\overline{\Omega}}(\varnothing)=0$;
 	\item $(C_{2})$ If $ E_{1}\subset E_{2} \subset \Omega_{2} \subset \Omega_{1}$, then
 	\begin{equation*}
 	C_{p(.)}^{\overline{\Omega}_{1}}(E_{1})\leqslant 	C_{p(.)}^{\overline{\Omega}_{1}}(E_{2});
 	\end{equation*}
 	\item $(C_{3})$ If $K_{1}\supset K_{2}\supset K_{3}\dots$ are compact subsets of $\Omega$, then
 	\begin{equation*}
 C_{p(.)}^{\overline{\Omega}}(\cap_{i=1}^{\infty} K_{i})=\lim _{i\rightarrow\infty}C_{p(.)}^{\overline{\Omega}}(K_{i}); 
 	\end{equation*}
 	\item $(C_{4})$ If $E_{1}\subset E_{2}\dots$ are subsets of $\Omega$, then
 	\begin{equation*}
 	C_{p(.)}^{\overline{\Omega}}(\cup_{i=1}^{\infty} E_{i})=\lim _{i\rightarrow\infty}C_{p(.)}^{\overline{\Omega}}(E_{i}); 
 	\end{equation*}
 	\item $(C_{5})$ For $E_{i}\subset \Omega$, $ i\in \mathbb{N}$, we have
 	\begin{equation*}
 	C_{p(.)}^{\overline{\Omega}}(\cup_{i=1}^{\infty} E_{i})\leqslant \sum ^{\infty}_{i=1} C_{p(.)}^{\overline{\Omega}}(E_{i}).
 	\end{equation*}
 \end{itemize}
 This means that the $p(.)$-relative capacity $ C_{p(.)}^{\overline{\Omega}}$ is an outer measure and a Choquet capacity.  
 \begin{proposition}
 	Let $F$ be a closed subset of $\Omega$. Then 	
 	\begin{equation*}
 	C_{p(.)}^{\overline{\Omega}}(\partial F)=C_{p(.)}^{\overline{\Omega}}(F)
 	\end{equation*}	
 \end{proposition}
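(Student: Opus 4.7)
The strategy is to establish the two inequalities separately.

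The inequality $C_{p(.)}^{\overline{\Omega}}(\partial F)\le C_{p(.)}^{\overline{\Omega}}(F)$ is the easy direction. Since $F$ is closed in $\Omega$ we have $\overline{F}=F$, hence $\partial F\subset F$, and the monotonicity property $(C_{2})$ of the relative capacity yields the inequality directly.

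For the reverse inequality $C_{p(.)}^{\overline{\Omega}}(F)\le C_{p(.)}^{\overline{\Omega}}(\partial F)$, my plan is to convert a near-minimizer for $C_{p(.)}^{\overline{\Omega}}(\partial F)$ into an admissible competitor for a relatively open neighborhood of $F$. Given $\varepsilon>0$, I would use the definition of the capacity on non-open sets to pick a relatively open set $O\supset\partial F$ in $\overline{\Omega}$ together with $u\in\mathcal{R}^{\overline{\Omega}}_{p(.)}(O)$ satisfying $\rho_{1,p(.)}^{\Omega}(u)<C_{p(.)}^{\overline{\Omega}}(\partial F)+\varepsilon$. The set $O':=O\cup\mathrm{int}(F)$ is then relatively open in $\overline{\Omega}$ and contains $F$, so it suffices to build $v\in\mathcal{R}^{\overline{\Omega}}_{p(.)}(O')$ with $\rho_{1,p(.)}^{\Omega}(v)$ close to $\rho_{1,p(.)}^{\Omega}(u)$. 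The natural candidate is $v:=u\vee\varphi$ for an appropriate cutoff $\varphi$ equal to $1$ on $\mathrm{int}(F)\setminus O$, supported inside $\mathrm{int}(F)$, with small modular. Passing to the limit $\varepsilon\to 0$ would then conclude the proof.

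The main obstacle is to choose the cutoff $\varphi$ so that $v\in\tilde{\mathcal{W}}^{1,p(.)}(\Omega)$ and the modular increment $\rho_{1,p(.)}^{\Omega}(v)-\rho_{1,p(.)}^{\Omega}(u)$ can be made arbitrarily small uniformly in $u$. The crucial geometric fact enabling this is that, since $F$ is closed in $\Omega$ and $u\ge 1$ a.e.\ on the relatively open neighborhood $O$ of $\partial F$, the region $\{\varphi>u\}$ where $v$ genuinely deviates from $u$ is confined to $\mathrm{int}(F)\setminus O$, a set strictly interior to $F$ and separated from $\Omega\setminus F$ by $O$. One would then exploit a distance-function construction together with the stability of $\tilde{\mathcal{W}}^{1,p(.)}(\Omega)$ under pointwise maxima (via Stampacchia-type truncation estimates) to bound the added modular in terms of $\varepsilon$ and conclude.
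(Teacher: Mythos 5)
The easy direction of your plan is fine and matches the paper. The reverse inequality is where the plan breaks down, and the gap is not a technical detail you left to fill but a structural obstruction. You want to take $v=u\vee\varphi$ with $\varphi=1$ on $\mathrm{int}(F)\setminus O$, $\varphi$ supported in $\mathrm{int}(F)$, and have the modular increment $\rho_{1,p(.)}^{\Omega}(v)-\rho_{1,p(.)}^{\Omega}(u)$ be arbitrarily small. But $\rho_{1,p(.)}^{\Omega}$ contains the $L^{p(.)}$ term $\int_{\Omega}|v|^{p(x)}\,dx$, and on the set where $\varphi>u$ (which contains essentially all of $\mathrm{int}(F)\setminus O$ once $u$ is a near-minimizer for $C_{p(.)}^{\overline{\Omega}}(\partial F)$, since such a $u$ decays away from $\partial F$) one picks up
\begin{equation*}
\int_{\{\varphi>u\}}\left(|\varphi|^{p(x)}-|u|^{p(x)}\right)dx,
\end{equation*}
which is bounded below by a fixed positive quantity of the order of $|\mathrm{int}(F)\setminus O|$ whenever $F$ has interior of positive measure. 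No Stampacchia-type truncation or distance-function construction touches this term: it is forced by the admissibility requirement $v\ge 1$ on a neighborhood of $F$, not by $\nabla\varphi$. Letting $\varepsilon\to 0$ would therefore only yield $C_{p(.)}^{\overline{\Omega}}(F)\le C_{p(.)}^{\overline{\Omega}}(\partial F)+|\mathrm{int}(F)|$, not the claimed equality.

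The paper avoids this precisely by \emph{not} working with the full modular. Its first step is to record that, because $\mathcal{W}^{1,p(.)}_{0}(\Omega)\subset\tilde{\mathcal{W}}^{1,p(.)}(\Omega)$, for $E\subset\Omega$ one may compute $C_{p(.)}^{\overline{\Omega}}(E)$ as $\inf\int_{\Omega}|\nabla u|^{p(x)}\,dx$ over $u\in\mathcal{W}^{1,p(.)}_{0}(\Omega)$ with $u\ge1$ near $E$ — a Dirichlet-integral (gradient-only) quantity. It then takes an admissible $u$ for $\partial F$ with $0\le u\le1$ and replaces it by $v$ equal to $1$ on $F$ and to $u$ on $\mathbb{R}^{n}\setminus F$; then $v$ is admissible for $F$, $\nabla v=0$ on $F$, and $\int_{\Omega}|\nabla v|^{p(x)}\,dx\le\int_{\Omega}|\nabla u|^{p(x)}\,dx$, which closes the argument. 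If you want your $u\vee\varphi$ route to work, you must first establish (or cite) that gradient-only characterization of the relative capacity; without it, the $L^{p(.)}$ part of the modular makes the modular increment uncontrollable.
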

 \begin{proof}
 Notice that since $ \mathcal{W} ^{1,p(.)}_{0}(\Omega)\subset  \tilde{{\mathcal{W}}}^{1,p(.)}(\Omega)$, then for every set $ E\subset\Omega $,	
 	\begin{equation*}
 	C_{p(.)}^{\overline{\Omega}}(E)=\inf \int_{\Omega}\left|\nabla u(x)\right|^{p(x)} \ dx, 
 	\end{equation*}	
 	where the infimum is taken over all $ u\in \mathcal{W}^{1,p(.)}_{0}(\Omega) $ (extended by zero outside $ \Omega $) which are at least one in a neighborhood of $ E $.\\
 	Let $ u $ be a $p(.)$-admissible function in the definition of 
 	$C_{p(.)}^{\overline{\Omega}}(\partial F)$, with $ 0\leqslant u \leqslant 1$ and $ u=0 $ on $\mathbb{R}^{n}\setminus \Omega$. Let 
 	\begin{equation*}	
 	v:=
 	\left\{
 	\begin{array}{rl}
 	1 & \text{in} \ F ,\\
 	u& \text{in} \ \mathbb{R}^{n}\setminus F. 
 	\end{array}
 	\right.
 	\end{equation*}
 	Then
 	\begin{eqnarray*}
 		\left\| v \right\|_{W^{1,p(.)}(\Omega)}&=&\left\| v \right\|_{L^{p(.)}(\Omega)}+\left\| \nabla v \right\|_{L^{p(.)}(\Omega)}\\
 		&\leqslant& \left\| u \right\|_{W^{1,p(.)}(\Omega)}+2\max \left\lbrace \left|F \right|^{\frac{1}{p^{+}}}, \left|F \right|^{\frac{1}{p^{-}}}  \right\rbrace\\
 		&<\infty&.    
 	\end{eqnarray*}
 	Hence $ v\in \mathcal{W}^{1,p(.)}(\Omega) $. Since $ v=u=0 $ in $ \mathbb{R}^{n}\setminus \Omega $, we have that $ v\in \mathcal{W}^{1,p(.)}_{0}(\Omega) $ and 
 	\begin{eqnarray*}
 		C_{p(.)}^{\overline{\Omega}}(F)&=&\inf \int_{\Omega}\left|\nabla (v(x))\right|^{p(x)} \ dx\\
 		&\leqslant& \inf \int_{\Omega}\left|\nabla (u(x))\right|^{p(x)} \ dx. 		
 	\end{eqnarray*}
 	Taking infimum over all $ u $, we get that
 	\begin{equation*}
 	C_{p(.)}^{\overline{\Omega}}(F)\leqslant C_{p(.)}^{\overline{\Omega}}(\partial F).
 	\end{equation*}
 	To prove the converse inequality, let $ F $ be a closed subset of $ \Omega $. Then $ \partial F \subset F $ and 
 	\begin{equation*}
 	C_{p(.)}^{\overline{\Omega}}(\partial F)\leqslant C_{p(.)}^{\overline{\Omega}}(F).
 	\end{equation*}  	
 \end{proof}
\subsubsection{\textbf{ $p(.)$-relatively quasicontinuous representative}}
In this sub subsection we study  $p(.)$-relative quasicontinuity property of Sobolev functions. It turns out that Sobolev functions are defined up to a set of $p(.)$-relative capacity zero.
\begin{definition}
	\item A set $P\subset\Omega$ is called $p(.)$-relatively polar if $C_{p(.)}^{\overline{\Omega}}(P)=0$.
\item We say that a property holds on a  set $ A\subset\Omega$ $ p(.) $-relatively quasieverywhere ($ p(.) $-r.q.e., for short) if there exists a $p(.)$-relatively polar set $P\subset A$ such that the property holds everywhere on $ A\setminus P$.
\end{definition}
By definition $\mathcal{W}^{1,p(.)}(\Omega)\cap \mathscr{C}_{c}(\overline{\Omega})$ is dense in $ \tilde{{\mathcal{W}}}^{1,p(.)}(\Omega)$ which is complete Banach space. The next result gives a way to find a $ p(.) $-relatively quasieverywhere converging subsequence.
\begin{theorem}\label{subsequence which converges $p(.)$-r.q.e.}
	Let $p\in\mathcal{P}(\Omega)$ satisfy $ 1<p^{-}\leqslant p^{+}<\infty$. For each Cauchy sequence with respect to the $\mathcal{W}^{1,p(.)}(\Omega)$-norm of functions in $\tilde{\mathcal{W}}^{1,p(.)}(\Omega)$ there exists a subsequence which converges $p(.)$-r.q.e. in $\Omega$. Moreover, the convergence is uniform outside a set of arbitrary small relative $p(.)$-capacity.
\end{theorem}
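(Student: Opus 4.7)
My plan is a Borel-Cantelli argument on level sets of consecutive differences, combined with the density of $\mathcal{W}^{1,p(.)}(\Omega)\cap\mathscr{C}_c(\overline{\Omega})$ in $\tilde{\mathcal{W}}^{1,p(.)}(\Omega)$ so that I may pass to continuous approximations whose level sets are relatively open in $\overline{\Omega}$; the capacity of each such level set is then bounded by the modular of an admissible test function via a Markov-type inequality.

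First I extract from the given Cauchy sequence a subsequence $(u_{k_j})$ with $\|u_{k_{j+1}}-u_{k_j}\|_{\mathcal{W}^{1,p(.)}(\Omega)}\leq 4^{-j}$ and, using density, pick $v_j\in\mathcal{W}^{1,p(.)}(\Omega)\cap\mathscr{C}_c(\overline{\Omega})$ with $\|u_{k_j}-v_j\|\leq 4^{-j}$, so that $\|v_{j+1}-v_j\|\leq 3\cdot 4^{-j}$. The set $O_j:=\{x\in\overline{\Omega}:|v_{j+1}(x)-v_j(x)|>2^{-j}\}$ is relatively open by continuity, and the admissible competitor $w_j:=2^j|v_{j+1}-v_j|\in\tilde{\mathcal{W}}^{1,p(.)}(\Omega)$ satisfies $w_j\geq 1$ on $O_j$. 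Since $\|w_j\|_{\mathcal{W}^{1,p(.)}(\Omega)}\leq 3\cdot 2^{-j}\leq 1$ for $j$ large, the modular--norm inequality yields
\[
C_{p(.)}^{\overline{\Omega}}(O_j)\leq \rho_{1,p(.)}^\Omega(w_j)\leq C\cdot 2^{-jp^{-}}.
\]
Setting $F_i:=\bigcup_{j\geq i}O_j$, countable subadditivity $(C_5)$ gives $C_{p(.)}^{\overline{\Omega}}(F_i)\leq C\sum_{j\geq i}2^{-jp^{-}}\to 0$. On $\overline{\Omega}\setminus F_i$ the tail bound $|v_{j+1}-v_j|\leq 2^{-j}$ holds for every $j\geq i$, so $(v_j)$ is uniformly Cauchy there, producing the uniform convergence outside sets of arbitrarily small relative $p(.)$-capacity. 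The set $P:=\bigcap_i F_i$ is $p(.)$-relatively polar, and $(v_j)$ converges pointwise on $\overline{\Omega}\setminus P$.

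To lift the conclusion to the original subsequence $(u_{k_j})$, I apply the same level-set argument to the differences $u_{k_j}-v_j$: a relatively open hull of $\{|u_{k_j}-v_j|>2^{-j}\}$ has capacity at most $C\cdot 2^{-jp^{-}}$ via the admissible test $2^j|u_{k_j}-v_j|\wedge 1$, and an additional Borel-Cantelli tail enlarges $P$ by a further polar set, outside of which the continuous $v_j$ may be chosen as pointwise representatives of $u_{k_j}$. The main obstacle is precisely this identification step, since each $u_{k_j}$ is an equivalence class and $\{|u_{k_j}-v_j|>2^{-j}\}$ is defined only modulo null sets; the resolution relies on $C_{p(.)}^{\overline{\Omega}}$ being defined on arbitrary subsets via the infimum over relatively open hulls, together with the closedness of $\tilde{\mathcal{W}}^{1,p(.)}(\Omega)$ under absolute value and truncation, so that $2^j|u_{k_j}-v_j|\wedge 1$ remains admissible for any such hull and the exceptional set may be absorbed into $P$.
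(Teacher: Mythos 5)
Your argument is the same Borel--Cantelli scheme the paper uses: extract a rapidly convergent subsequence, look at level sets of consecutive differences, bound their relative $p(.)$-capacity by the modular of a rescaled difference via the unit-ball/modular--norm inequality, and sum by countable subadditivity. The one place you genuinely diverge is that you insert an explicit density step, replacing $u_{k_j}$ by continuous $v_j\in\mathcal{W}^{1,p(.)}(\Omega)\cap\mathscr{C}_c(\overline{\Omega})$ so that the level sets $O_j$ are honestly relatively open. The paper skips this and asserts directly that $G_i=\{|u_{i+1}-u_i|>2^{-i}\}$ is open, which silently presupposes that the $u_i$ are continuous; in the paper's only application (Theorem 2.11) the Cauchy sequence is indeed taken from $\mathcal{W}^{1,p(.)}(\Omega)\cap\mathscr{C}_c(\overline{\Omega})$, so the omission is harmless there. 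In this sense your first two paragraphs are a tighter version of the paper's proof, and they already deliver what the paper actually proves: uniform convergence of a sequence of continuous functions outside sets of small relative capacity, hence r.q.e. pointwise convergence.

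The trouble is your third paragraph, the ``lifting'' back from the $v_j$ to the $u_{k_j}$. The set $\{|u_{k_j}-v_j|>2^{-j}\}$ is only determined modulo a Lebesgue-null set, since $u_{k_j}$ is an a.e.-equivalence class, and a Lebesgue-null set can carry positive relative $p(.)$-capacity. Thus ``a relatively open hull of $\{|u_{k_j}-v_j|>2^{-j}\}$'' depends on the chosen representative of $u_{k_j}$: with a bad representative the set contains a fixed null set of positive capacity, and no bound of the form $C\cdot 2^{-jp^-}$ can hold. The test function $2^j|u_{k_j}-v_j|\wedge 1$ is $\geq 1$ only a.e.\ on that level set, not on some specific relatively open neighbourhood of it, so it is not admissible for $C^{\overline{\Omega}}_{p(.)}$ of an open hull in the sense the definition requires. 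You recognise the obstacle, but the proposed ``resolution'' (that the capacity is defined by an infimum over open hulls) does not fix it, because the infimum is still taken over a set that is ill-defined before a pointwise representative of $u_{k_j}$ is chosen — which is exactly what the theorem is supposed to produce. The clean way out is the one the paper implicitly takes: either assume the Cauchy sequence consists of continuous functions, in which case your first two paragraphs finish the proof and the lifting step is superfluous; or else read the theorem's conclusion as asserting that the uniformly convergent $v_j$ yield a quasicontinuous representative of the limit, which is precisely how the result is consumed in Theorem 2.11.
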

\begin{proof}
	Let $(u_{i})$ be a Cauchy sequence in $\tilde{\mathcal{W}}^{1,p(.)}(\Omega)$. Without loss of generality, we denote again by $(u_{i})$ the subsequence of $(u_{i})$ such that
	\begin{eqnarray*}
		\left\| u_{i+1}-u_{i} \right\|_{\mathcal{W}^{1,p(.)}(\Omega)}\leq \dfrac{1}{8^{i}}, \ i\in \mathbb{N}.   
	\end{eqnarray*}
	Put
	\begin{eqnarray*}
		G_{i}:=\left\lbrace x\in\Omega: \ \left|u_{i+1}(x)-u_{i}(x) \right|>\dfrac{1}{2^{i}}, \ i\in \mathbb{N}\right\rbrace   
	\end{eqnarray*}
	and 
	\begin{eqnarray*}
		G_{k}:=\bigcup_{i=k}^{\infty}G_{i}.  
	\end{eqnarray*}
	Hence $G_{i}$ is an open set in $\Omega$ and $ 2^{i} \left|u_{i+1}(x)-u_{i}(x) \right|>1$ on $G_{i}$. Since 
	$\mathcal{W}^{1,p(.)}(\Omega)$ is a Banach lattice, we deduce that $\tilde{\mathcal{W}}^{1,p(.)}(\Omega)$ is also a Banach lattice. Therefore, 
	\begin{eqnarray*}
		2^{i} \left|u_{i+1}(x)-u_{i}(x) \right|\in\tilde{\mathcal{W}}^{1,p(.)}(\Omega)  
		\ \textnormal{and}\ \left\| 2^{i} \left|u_{i+1}(x)-u_{i}(x) \right|\right\|_{\mathcal{W}^{1,p(.)}(\Omega)}\leqslant \dfrac{1}{4^{i}}\leqslant 1. 
	\end{eqnarray*}
	By the unit ball property (\cite[Lemma 2.1.14]{Hasto}) we get that
	\begin{eqnarray*}
		\rho_{1,p(.)}^{\Omega}(2^{i} \left|u_{i+1}(x)-u_{i}(x) \right|)\leqslant 
		2^{i}\left\| u_{i+1}-u_{i}\right\|_{W^{1,p(.)}(\Omega)}\leqslant \dfrac{1}{4^{i}}. 
	\end{eqnarray*}
	Consequently
	\begin{eqnarray*}
		C_{p(.)}^{\overline{\Omega}}(G_{i})\leqslant \dfrac{1}{4^{i}}.
	\end{eqnarray*}
	By sub-additivity of the relative $p(.)$-capacity, we obtain that
	\begin{eqnarray*}
		C_{p(.)}^{\overline{\Omega}}(G_{k})&=&C_{p(.)}^{\overline{\Omega}}(\cup^{\infty}_{i=k} G_{i})\\
		&\leqslant& \sum_{i=k}^{\infty}C_{p(.)}^{\overline{\Omega}}(G_{i})\\
		&\leqslant& \sum_{i=k}^{\infty} \dfrac{1}{4^{i}}=\dfrac{1}{4^{k-1}}.
	\end{eqnarray*}
	Hence
	\begin{eqnarray*}
		C_{p(.)}^{\overline{\Omega}}(\cap_{k=1}^{\infty} G_{k})&=&C_{p(.)}^{\overline{\Omega}}(\cap_{k=1}^{\infty} \cup^{\infty}_{i=k} G_{i})\\
		&\leqslant& \lim_{k\rightarrow \infty}C_{p(.)}^{\overline{\Omega}}(\cup_{i=k}^{\infty}G_{i})\\
		&\leqslant& \lim_{k\rightarrow \infty} \dfrac{1}{4^{k-1}}=0.
	\end{eqnarray*} 
	Thus 
	\begin{eqnarray*}
		C_{p(.)}^{\overline{\Omega}}(\cap_{k=1}^{\infty} G_{k})=0.
	\end{eqnarray*}
	Consequently $\cap_{k=1}^{\infty} \cup^{\infty}_{i=k} G_{i}  $ is a $p(.)$-relatively polar set. Moreover $u_{i}$ converges pointwise in $ \Omega\setminus \cap_{k=1}^{\infty} \cup^{\infty}_{i=k} G_{i}  $. Since $ \left|u_{i+1}(x)-u_{i}(x) \right|\leqslant \dfrac{1}{2^{i}}   $ in $ \Omega\setminus \cap_{k=1}^{\infty} \cup^{\infty}_{i=k} G_{i}  $ for all $i\geq k $, we have that $(u_{i})$ is a sequence of continuous functions on $\Omega$ which converges uniformly in $\Omega\setminus G_{k}$.          
\end{proof}
In the following we give sufficient conditions for the existence of a $p(.)$-relatively quasicontinuous representative for functions in $\tilde{\mathcal{W}}^{1,p(.)}(\Omega)$.
\begin{definition}
	\item  A function $ u:\Omega\longrightarrow \mathbb{R}$ is said to be $p(.)$-relatively quasicontinuous ($ p(.) $-r.q.c., for short) if for every $ \varepsilon>0 $, there exists a relatively open set $ O_{\varepsilon}\subset\Omega $ such that $C_{p(.)}^{\overline{\Omega}}(O_{\varepsilon})<\varepsilon$ and $ u $ is continuous on $ \Omega\setminus O_{\varepsilon}$.    
\end{definition}
\begin{theorem}\label{quasicontinuous representative}
	Let $p\in\mathcal{P}(\Omega)$ with $ 1<p^{-}\leqslant p^{+}<\infty$. Then for every $ u\in\tilde{\mathcal{W}}^{1,p(.)}(\Omega) $, there exists a unique (up to a $p(.)$-relative polar set) $p(.)$-r.q.c. function $\tilde{u}:\Omega\longrightarrow \mathbb{R}$ such that $\tilde{u}=u$ a.e. in $\Omega$. 
\end{theorem}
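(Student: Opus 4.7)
The plan is to build $\tilde{u}$ as the pointwise $p(.)$-r.q.e.\ limit of a continuous approximating sequence, deduce its $p(.)$-r.q.c.\ property from the uniform control provided by the preceding Cauchy-sequence theorem, and establish uniqueness by a density-plus-continuity argument. Since $u \in \tilde{\mathcal{W}}^{1,p(.)}(\Omega)$ is a closure point of $\mathcal{W}^{1,p(.)}(\Omega) \cap \mathscr{C}_{c}(\overline{\Omega})$, I first fix an approximating sequence $(u_i)$ in that intersection with $u_i \to u$ in the $\mathcal{W}^{1,p(.)}(\Omega)$-norm. This sequence is Cauchy, and the preceding theorem yields a subsequence, still written $(u_i)$, together with open sets $G_k \subset \Omega$ satisfying $C^{\overline{\Omega}}_{p(.)}(G_k) \to 0$, such that $u_i$ converges pointwise on the complement of the polar set $N := \bigcap_{k} G_k$ and uniformly on $\Omega \setminus G_k$ for each $k$.

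Define $\tilde{u}(x) := \lim_i u_i(x)$ for $x \in \Omega \setminus N$ and $\tilde{u}(x) := 0$ on $N$. For any $\varepsilon > 0$, select $k$ so that $C^{\overline{\Omega}}_{p(.)}(G_k) < \varepsilon$: each $u_i$ is continuous on $\Omega$, hence on the relatively open set $\Omega \setminus G_k$, and uniform convergence transfers continuity to $\tilde{u}$ there, establishing the $p(.)$-r.q.c.\ property. For the a.e.\ identity, $u_i \to u$ in $L^{p(.)}(\Omega)$ entails convergence in measure and hence a.e.\ convergence along a further subsequence; any $p(.)$-relatively polar set has Lebesgue measure zero (any admissible $\phi \geq 1$ a.e.\ on a neighbourhood of the set with arbitrarily small modular bounds its Lebesgue measure from above), so the $p(.)$-r.q.e.\ limit coincides with the a.e.\ limit, giving $\tilde{u} = u$ a.e.

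For uniqueness, suppose $\tilde{u}_1, \tilde{u}_2$ are two $p(.)$-r.q.c.\ representatives of $u$, and set $v := \tilde{u}_1 - \tilde{u}_2$, which is $p(.)$-r.q.c.\ with $v = 0$ a.e. Fix $\delta, \varepsilon > 0$, pick relatively open sets $O^{j}_{\varepsilon}$ with $C^{\overline{\Omega}}_{p(.)}(O^{j}_{\varepsilon}) < \varepsilon / 2$ and $\tilde{u}_j$ continuous on $\Omega \setminus O^{j}_{\varepsilon}$ for $j = 1, 2$, and set $O_{\varepsilon} := O^{1}_{\varepsilon} \cup O^{2}_{\varepsilon}$. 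At every Lebesgue density point $x$ of $\Omega \setminus O_{\varepsilon}$ lying in $\Omega \setminus O_{\varepsilon}$, continuity of $v$ on $\Omega \setminus O_{\varepsilon}$ together with $v = 0$ on a positive-measure subset of each small neighbourhood forces $v(x) = 0$. The exceptional non-density points form a Lebesgue-null set; by outer regularity and subadditivity of $C^{\overline{\Omega}}_{p(.)}$ this set is absorbed into a marginally larger relatively open cover of $\{|v| > \delta\}$ whose capacity remains a small multiple of $\varepsilon$. Letting $\varepsilon \downarrow 0$ gives $C^{\overline{\Omega}}_{p(.)}(\{|v| > \delta\}) = 0$ for every $\delta > 0$, so $v = 0$ $p(.)$-r.q.e.

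The main obstacle is this last absorption step, since Lebesgue-null sets can in principle carry positive $p(.)$-capacity; one closes the gap by iterating the cover construction along a diagonal sequence $\varepsilon_k \downarrow 0$ and exploiting the outer-measure property $(C_{5})$ of the relative $p(.)$-capacity to control the total capacity of the absorbed null sets.
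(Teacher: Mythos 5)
Your existence argument tracks the paper: fix an approximating sequence from $\mathcal{W}^{1,p(.)}(\Omega)\cap\mathscr{C}_c(\overline{\Omega})$, extract via the Cauchy-sequence theorem a subsequence converging $p(.)$-r.q.e.\ and uniformly outside sets of small relative capacity, and define $\tilde u$ as the pointwise limit; your supplementary observation that $p(.)$-relatively polar sets are Lebesgue-null is correct and cleanly yields $\tilde u = u$ a.e. The gap is in uniqueness, and you correctly flag it yourself. The step asserting that the exceptional Lebesgue-null set ``is absorbed into a marginally larger relatively open cover whose capacity remains a small multiple of $\varepsilon$'' is unsupported: a Lebesgue-null set can carry positive (even infinite) relative $p(.)$-capacity, and iterating the cover along a diagonal sequence $\varepsilon_k\downarrow 0$ cannot shrink the capacity of a \emph{fixed} set below a positive lower bound, so the proposed repair does not close the gap. (For context, the paper's own proof of uniqueness is also incomplete on exactly this point: it records that $\tilde u - \tilde v$ is $p(.)$-r.q.c.\ and vanishes a.e.\ and simply stops.)

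The density-point detour is what leads you astray; the decisive structural fact is relative openness, which makes the null-set worry evaporate. With $v := \tilde u_1 - \tilde u_2$, choose $O_\varepsilon$ open with $C^{\overline{\Omega}}_{p(.)}(O_\varepsilon)<\varepsilon$ and $v|_{\Omega\setminus O_\varepsilon}$ continuous. Then $E_\delta := \{x\in\Omega\setminus O_\varepsilon : |v(x)|>\delta\}$ is relatively open in $\Omega\setminus O_\varepsilon$, so $E_\delta = U\cap(\Omega\setminus O_\varepsilon)$ for some open $U\subset\Omega$. Since $v=0$ a.e., $|E_\delta| = |U\setminus O_\varepsilon| = 0$, i.e.\ $U$ and $O_\varepsilon$ differ only on a Lebesgue-null set. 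Because the admissibility class $\mathcal{R}^{\overline{\Omega}}_{p(.)}(\cdot)$ is defined by an \emph{a.e.}\ inequality, every $w\in\mathcal{R}^{\overline{\Omega}}_{p(.)}(O_\varepsilon)$ automatically lies in $\mathcal{R}^{\overline{\Omega}}_{p(.)}(U)$, whence $C^{\overline{\Omega}}_{p(.)}(U)\le C^{\overline{\Omega}}_{p(.)}(O_\varepsilon)<\varepsilon$. Since $\{x\in\Omega:|v(x)|>\delta\}\subset O_\varepsilon\cup U$, subadditivity gives $C^{\overline{\Omega}}_{p(.)}(\{|v|>\delta\})<2\varepsilon$; letting $\varepsilon\downarrow 0$ and summing over $\delta=1/n$ via $(C_5)$ concludes. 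No density points and no absorption of abstract null sets are needed: the capacity functional is blind to the null discrepancy between $U$ and $O_\varepsilon$.
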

\begin{proof}
	Let $u\in\tilde{\mathcal{W}}^{1,p(.)}(\Omega)$. There exists a sequence $\mathcal{W}^{1,p(.)}(\Omega)\cap \mathscr{C}_{c}(\overline{\Omega})$ such that 
	$u_{i}\longrightarrow u$ in $\mathcal{W}^{1,p(.)}(\Omega)$. By Theorem \ref{subsequence which converges $p(.)$-r.q.e.} there exists a subsequence which converges $p(.)$-r.q.e. in $ \Omega $ and uniformly outside a $p(.)$-relative polar set. Let $\tilde{u}\longrightarrow u_{i}$ be the point-wise limit of 
	$(u_{i})$. By the uniform convergence we get that $\tilde{u}:\Omega\longrightarrow \mathbb{R}$ is $p(.)$-r.q.c. $\tilde{u}=u$ a.e. in $\Omega$. For the uniqueness, we assume that there exists another $\tilde{v}$ $p(.)$-r.q.c. in $\Omega$ such that $\tilde{v}=u$ a.e. in $\Omega$. Hence $\tilde{u}-\tilde{v}=0$ a.e. in $\Omega$ and $\tilde{u}-\tilde{v}=0$ is $p(.)$-r.q.c. in $\Omega$.    
\end{proof}
\begin{corollary}
	Let $p\in\mathcal{P}(\Omega)$ satisfy $ 1<p^{-}\leqslant p^{+}<\infty$. Let $(u_{i})$ be a sequence of $p(.)$-r.q.c. functions in $\tilde{\mathcal{W}}^{1,p(.)}(\Omega)$ which converges to a $p(.)$-r.q.c. function $u\in\tilde{\mathcal{W}}^{1,p(.)}(\Omega)$. Then there exists a subsequence which converges
	$p(.)$-r.q.e. to $u$ on $\Omega$.
\end{corollary}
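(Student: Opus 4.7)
The plan is to apply Theorem \ref{subsequence which converges $p(.)$-r.q.e.} to the sequence $(u_i)$ itself and then use the uniqueness half of Theorem \ref{quasicontinuous representative} to identify the pointwise limit with the given $p(.)$-r.q.c. function $u$. Since $(u_i)$ converges to $u$ in $\tilde{\mathcal{W}}^{1,p(.)}(\Omega)$, it is a Cauchy sequence with respect to the $\mathcal{W}^{1,p(.)}(\Omega)$-norm, so Theorem \ref{subsequence which converges $p(.)$-r.q.e.} produces a subsequence $(u_{i_k})$ and a $p(.)$-relatively polar set $P\subset\Omega$ such that $u_{i_k}$ converges pointwise on $\Omega\setminus P$ and in fact converges uniformly outside sets of arbitrarily small relative $p(.)$-capacity. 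Call the pointwise limit $\tilde{u}$; the uniform control on the complements of small-capacity sets forces $\tilde{u}$ to be $p(.)$-r.q.c. on $\Omega$.

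Next I would identify $\tilde{u}$ with $u$. Because $u_i\to u$ in $\mathcal{W}^{1,p(.)}(\Omega)$, we also have $u_{i_k}\to u$ in $L^{p(.)}(\Omega)$, and extracting a further subsequence if necessary we may assume $u_{i_k}\to u$ almost everywhere in $\Omega$. Hence $\tilde{u}=u$ a.e. on $\Omega$. Now both $u$ and $\tilde{u}$ are $p(.)$-r.q.c. functions in $\tilde{\mathcal{W}}^{1,p(.)}(\Omega)$ agreeing almost everywhere, so the uniqueness statement in Theorem \ref{quasicontinuous representative} yields $\tilde{u}=u$ everywhere outside a $p(.)$-relatively polar set.

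Combining these two observations, $u_{i_k}(x)\to \tilde{u}(x)=u(x)$ for every $x$ outside the union of two $p(.)$-relatively polar sets, which by subadditivity $(C_5)$ of $C_{p(.)}^{\overline{\Omega}}$ is itself $p(.)$-relatively polar. This gives the desired subsequence converging $p(.)$-r.q.e. to $u$ on $\Omega$. There is no genuine obstacle here: the argument is a direct consequence of Theorems \ref{subsequence which converges $p(.)$-r.q.e.} and \ref{quasicontinuous representative}, and the only point requiring a moment of care is to remember that $\mathcal{W}^{1,p(.)}$-convergence implies $L^{p(.)}$-convergence, so that one can legitimately pass to an a.e.\ convergent subsequence in order to invoke the uniqueness of the $p(.)$-r.q.c.\ representative.
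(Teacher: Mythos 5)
Your proof is correct, but it takes a genuinely different route from the paper's. You treat Theorem \ref{subsequence which converges $p(.)$-r.q.e.} as a black box applied to the Cauchy sequence $(u_i)$ itself, obtain a subsequence with a q.e.\ pointwise limit $\tilde u$, observe that $\tilde u$ is $p(.)$-r.q.c.\ by the uniform control outside sets of small relative capacity, and then identify $\tilde u$ with $u$ by passing to a further a.e.\ convergent sub-subsequence and invoking the uniqueness clause of Theorem \ref{quasicontinuous representative}. The paper instead reruns the mechanism of Theorem \ref{subsequence which converges $p(.)$-r.q.e.} directly on the differences $u_{i_k}-u$: it selects a subsequence with $\sum_k 2^{i_k}\|u_{i_k}-u\|_{\mathcal{W}^{1,p(.)}(\Omega)}\le 1$, forms the sets $G_k=\{x\in\Omega:\ |u_{i_k}(x)-u(x)|>2^{-i_k}\}$, and shows that $P=\cap_j\cup_{k\ge j}G_k$ has $C_{p(.)}^{\overline\Omega}(P)=0$ by the same capacity estimates, which gives $u_{i_k}\to u$ q.e.\ with no limit-identification step at all. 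Your route is more modular but carries two extra dependencies that the paper avoids: the uniqueness statement in Theorem \ref{quasicontinuous representative}, and the fact (which you assert but do not verify) that a uniform limit outside sets of arbitrarily small relative $p(.)$-capacity of $p(.)$-r.q.c.\ functions is again $p(.)$-r.q.c. Both dependencies are standard and check out, so your argument is sound; the paper's version is just shorter and more self-contained because it estimates the bad sets for $u_{i_k}-u$ directly rather than producing an intermediate limit $\tilde u$.
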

\begin{proof}
	Let $ (u_{i_{k}} )$ be a subsequence of $ (u_{i} )$ such that
	\begin{eqnarray*}
		\sum_{k=1}^{\infty} 2^{i_{k}}\left\| u_{i_{k}}-u \right\|_{\mathcal{W}^{1,p(.)}(\Omega)}\leq 1.
	\end{eqnarray*} 
	Put
	\begin{eqnarray*}
		P:=\cap_{j=1}^{\infty} \cup^{\infty}_{k=j} G_{k}, 
	\end{eqnarray*}
	where 
	\begin{eqnarray*}
		G_{k}:=\left\lbrace x\in\Omega: \ \left|u_{i_{k}}(x)-u(x) \right|>\dfrac{1}{2^{i_{k}}}.\right\rbrace. 
	\end{eqnarray*}
	There exists $ j_{0}\in \mathbb{N}$ such that  
	\begin{eqnarray*}
		\left|u_{i_{k}}(x)-u(x) \right|\leqslant \dfrac{1}{2^{i_{k}}},\ \forall \ k\geq j_{0}.  
	\end{eqnarray*}
	Hence $u_{i_{k}}(x)$ converges uniformly in $ \Omega\setminus \cup _{k=j_{0}}^{\infty}G_{k} $ and everywhere in $ \Omega\setminus P $. By the same way in the proof of Theorem \ref{subsequence which converges $p(.)$-r.q.e.} we get that
	\begin{eqnarray*}
		C_{p(.)}^{\overline{\Omega}}(P)=0.
	\end{eqnarray*}
	Hence $P$ is $p(.)$-relatively polar set and the proof is finished.
\end{proof}

\subsubsection{\textbf{ $p(.)$-fine continuity}}
In addition to the $ p(.)$-relative quasicontinuity studied in this section, the equivalent class of Sobolev functions enjoy another, subtler continuity property, called $p(.)$-fine continuity.

$p(.)$-fine continuity is closely related to the concept of a $p(.)$-thin set, which is the subject of the following definition.
\begin{definition}
Let $E\subset \mathbb{R}^{n}$ and $ p\in \mathcal{P}(\Omega)$. The set $ E $ is called $p(.)$-thin at a point $a\in \mathbb{R}^{n}$ if
\begin{equation*}
\int_{0}^{1} \left( \dfrac{C_{p(.)}^{\overline{B(a,2r)}}(E\cap B(a,r))}{C_{p(.)}^{\overline{B(a,2r)}}(B(a,r))}\right)^{p^{'}(a)-1}  \ \dfrac{dr}{r}<\infty.
\end{equation*}
We say that $ E $ is $p(.)$-thick at $ a $ if $ E $ is not $ p(.) $-thin
at $ a $.
\end{definition} 
We follow the outlines given in \cite[ Theorem 6.33] {Heinonen}, we deduce that if $a\in E$ and $ C_{p(.)}^{\overline{\Omega}}\left\lbrace a\right\rbrace>0 $, then $ E $ is $p(.)$-thick at $ a $. On the other hand, if $a\in \overline{E}\setminus E$, then $ E $ can be $p(.)$-thin at $a$ even if $ C_{p(.)}^{\overline{\Omega}}\left\lbrace a\right\rbrace>0 $. Therefore, if $E$ consists of a sequence $ x_{i}\neq 0$ converging to $ 0 $, then $ E $ is $p(.)$-thin at $a$.

It is useful to define an $p(.)$-fine topology associated to the concept of $p(.)$-thinness.
\begin{definition}
 Let $x \in \mathbb{R}^{n}$. Then a set $ U\subset \mathbb{R}^{n}$ is called an $p(.)$-fine neighborhood of $ x $ if $ x \in U $ and $U^{c}$ is $p(.)$-thin at $ x $.
 
A set $ G\subset \mathbb{R}^{n}$ is $p(.)$-finely open if it is an $p(.)$-fine neighborhood of each of its points.

A set $ F\subset \mathbb{R}^{n}$ is $p(.)$-finely closed if $ F^{c} $ is $p(.)$-finely open.
\end{definition}
This definition give a rise to a topology wich we call $p(.)$-fine topology. Its clear that $p(.)$-fine topology is finer than the Euclidean topology.
\begin{definition}\label{definition fine continuity}
 A real function $ u $ defined in a $p(.)$-fine open set $ U $ is $p(.)$-finely continuous at a point $ a\in U $ if the set
 $ \left\lbrace x\in U: \left|f(x)-f(a) \right|\geq \varepsilon \right\rbrace $ is $p(.)$-thin at $ a $ for each $ \varepsilon >0 $.    	
\end{definition}
 It follows from  Definition \ref{definition fine continuity} that, if $ u$ is defined on $ F\subset G $ such that $C_{p(.)}^{\overline{\Omega}}(G\setminus F)=0$, then $ u $ is $p(.)$-finely continuous at $ x\in F $ if and only if the set
 \begin{equation*}
  \left\lbrace y\in F: \left|u(y)-u(x) \right|\geq \varepsilon \right\rbrace \cup F^{c}
 \end{equation*}  
   is $p(.)$-thin at $ x $ for each $\varepsilon >0$.  
 \begin{theorem}\label{p(.)-fine continuity}
Let $u\in\tilde{\mathcal{W}}^{1,p(.)}(\Omega)$. Then $ u $ is $p(.)$-finely continuous relatively quasieverywhere in $ \Omega $. 
 \end{theorem}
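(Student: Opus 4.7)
The plan is to combine the approximation technology of Theorem~\ref{subsequence which converges $p(.)$-r.q.e.} with a Wiener-type capacity estimate. First I would replace $u$ by its $p(.)$-r.q.c.\ representative (Theorem~\ref{quasicontinuous representative}), keep the name $u$, and pick a sequence $(u_i)\subset \mathcal{W}^{1,p(.)}(\Omega)\cap\mathscr{C}_c(\overline{\Omega})$ with $\|u_{i+1}-u_i\|_{\mathcal{W}^{1,p(.)}(\Omega)}\leq 8^{-i}$. The argument inside the proof of Theorem~\ref{subsequence which converges $p(.)$-r.q.e.} then tells me that the sets $G_i=\{y\in\Omega:|u_{i+1}(y)-u_i(y)|>2^{-i}\}$ satisfy $C_{p(.)}^{\overline{\Omega}}(G_i)\leq 4^{-i}$, that the tails $\Gamma_k=\bigcup_{i\geq k}G_i$ obey $C_{p(.)}^{\overline{\Omega}}(\Gamma_k)\leq 4^{1-k}$, that $u_i\to u$ uniformly on $\Omega\setminus\Gamma_k$, and that $N=\bigcap_k \Gamma_k$ is $p(.)$-relatively polar.

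Next I would trap the level sets of $u$ inside the tails $\Gamma_k$. Fix $x\in\Omega\setminus N$ and $\varepsilon>0$, and write $E_\varepsilon=\{y\in\Omega:|u(y)-u(x)|\geq\varepsilon\}$. Pick $i_0$ with $|u_{i_0}(x)-u(x)|<\varepsilon/3$. The triangle inequality gives, for every $i\geq i_0$ and every $r>0$,
\begin{equation*}
E_\varepsilon\cap B(x,r)\ \subset\ \{y\in B(x,r):|u_i(y)-u_i(x)|\geq\varepsilon/3\}\ \cup\ \{y\in B(x,r):|u_i(y)-u(y)|\geq\varepsilon/3\}.
\end{equation*}
Continuity of $u_i$ empties the first set once $r$ is small (depending on $i$), while the uniform convergence of $u_i$ to $u$ on $\Omega\setminus\Gamma_i$ places the second set inside $\Gamma_i\cap B(x,r)$ for $i$ sufficiently large. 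Choosing $i=i(r)\to\infty$ as $r\to 0$ produces $E_\varepsilon\cap B(x,r)\subset\Gamma_{i(r)}\cap B(x,r)$ for all sufficiently small $r$.

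It then remains to feed this inclusion into the Wiener-type integral defining $p(.)$-thinness. A dyadic splitting $r\in[2^{-j-1},2^{-j}]$, combined with a sharp capacity bound of the form $C_{p(.)}^{\overline{B(x,2r)}}(B(x,r))\asymp r^{n-p(x)}$ (which is where the standing hypothesis $1<p^-\leq p^+<\infty$ and log-H\"older control of $p$ near $x$ enter), should reduce matters to the convergence of a geometric series whose ratio is controlled by $C_{p(.)}^{\overline{\Omega}}(\Gamma_{i(j)})\leq 4^{1-i(j)}$ raised to the power $p'(x)-1$. The main obstacle sits precisely here: one must verify that the global capacity decay $C_{p(.)}^{\overline{\Omega}}(\Gamma_i)\leq 4^{1-i}$ survives both the localization to small balls $B(x,2r)$ and the exponentiation by $p'(x)-1$, which in turn relies on a careful Poincar\'e-type inequality in the variable exponent setting. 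Once this is established, the points at which the Wiener sum diverges form a $p(.)$-relatively polar set, and adjoining them to $N$ yields a single $p(.)$-relatively polar exceptional set outside of which $u$ is $p(.)$-finely continuous.
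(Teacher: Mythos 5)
Your route is genuinely different from the paper's. You run a direct Wiener-integral estimate by trapping level sets of $u$ inside the tail sets $\Gamma_k$ coming from the rapidly converging approximating sequence, whereas the paper takes the decreasing quasicontinuity exceptional sets $G_n$ with $C_{p(.)}^{\overline{\Omega}}(G_n)\to 0$, picks admissible capacity test functions $u_n\geq 1$ on $G_n$, sends them to zero $p(.)$-r.q.e.\ via Theorem~\ref{subsequence which converges $p(.)$-r.q.e.}, and from that concludes each $G_n$ is $p(.)$-thin at r.q.e.\ points; fine continuity then follows because $\{|u-u(x)|\geq\varepsilon\}$ is locally contained in $G_n$. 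So the decomposition and the final mechanism (explicit Wiener sum versus a smallness-implies-thinness step applied to the $G_n$) are distinct.

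The obstacle you flag at the end is a real gap, not just a technicality to be routinely checked. The inclusion $E_\varepsilon\cap B(x,r)\subset\Gamma_{i(r)}\cap B(x,r)$ gives you control of $C_{p(.)}^{\overline{\Omega}}(\Gamma_{i(r)})$, but the Wiener integrand asks for the local ratio $C_{p(.)}^{\overline{B(x,2r)}}(\Gamma_{i(r)}\cap B(x,r))\,/\,C_{p(.)}^{\overline{B(x,2r)}}(B(x,r))$, and a set of arbitrarily small global relative capacity can still have capacity density bounded away from $0$ at a given point (think of a sequence of shrinking annuli or a cusp accumulating at $x$). Passing from global smallness to local density decay at r.q.e.\ $x$ is exactly a nonlinear Kellogg/Hedberg--Wolff type lemma, which you do not produce and which is not a consequence of the Poincar\'e-type inequality you invoke; the exponent $p'(x)-1$ and the localization to $B(x,2r)$ are not where the difficulty really lies. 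Without such a lemma, the geometric series argument in the last paragraph cannot close. For comparison, the paper's proof also leans on an implicit smallness-to-thinness implication, but it does so via the test functions $u_n$ rather than via a direct Wiener estimate, so the two sketches are incomplete in different places rather than equivalent.
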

\begin{proof}
Let $u\in\tilde{\mathcal{W}}^{1,p(.)}(\Omega)$. By Theorem \ref{quasicontinuous representative}, there exist a $p(.)$-r.q.c representative $\tilde{u}$ such that $\tilde{u}=u$ a.e. in $\Omega$. Hence for every $ \varepsilon >0 $, there is a decreasing sequence of open sets, $ (G_{n} )_{n\geq 1}\subset\Omega$ such that $ C_{p(.)}^{\overline{\Omega}}(G_{n})<\varepsilon$ and $ u $ is continuous on $ \Omega\setminus G_{n}$. In particular $\lim_{n\longrightarrow +\infty} G_{n}=0$. For each $ n=1,2,3,\ldots$, let $ u_{n} $ be a $p(.)$-admissible functions in the definition of $ C_{p(.)}^{\overline{\Omega}}(G_{n}) $. Hence $ u_{n}\geq 1$ a.e. in $ G_{n} $ and by Theorem \ref{subsequence which converges $p(.)$-r.q.e.}, we can assume that $\lim_{n\longrightarrow +\infty} u_{n}=0 \ p(.)\text{-r.q.e.}$, therefore, for all sufficiently large $ n $, we give that $G_{n}$ is $p(.)$-thin. Hence $ u $ is $p(.)$-finely continuous relatively quasieverywhere in $ \Omega $.    
\end{proof}
\begin{corollary}
Let $u\in\tilde{\mathcal{W}}^{1,p(.)}(\Omega)$. Then there is a set 
$E\subset\Omega$ such that $ E $ is $p(.)$-thin at $ x\in\Omega $ and \begin{equation*}
\lim_{y\longrightarrow x} u(y)=u(x),	
\end{equation*}
for all $ y\in \Omega\setminus E $.
\end{corollary}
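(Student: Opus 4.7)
The plan is to derive the corollary directly from Theorem \ref{p(.)-fine continuity} by the standard equivalence between $p(.)$-fine continuity at a point and the existence of a $p(.)$-thin exceptional set outside of which the ordinary limit exists. By that theorem, there is a $p(.)$-relatively polar set $N \subset \Omega$ such that $u$ is $p(.)$-finely continuous at every $x \in \Omega \setminus N$; fix such a point $x$. For each $n \in \mathbb{N}$, Definition \ref{definition fine continuity} tells us that the level set
\[ E_n := \bigl\{\, y \in \Omega : |u(y) - u(x)| \geq 1/n \,\bigr\} \]
is $p(.)$-thin at $x$. The goal is to assemble these into a single set $E = E_x$ that is still $p(.)$-thin at $x$ and whose complement eventually avoids every $E_n$.

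Since each $E_n$ is thin at $x$, the Wiener-type integral
\[ W_n(\rho) := \int_0^{\rho} \left( \frac{C_{p(.)}^{\overline{B(x,2r)}}(E_n \cap B(x,r))}{C_{p(.)}^{\overline{B(x,2r)}}(B(x,r))} \right)^{p'(x)-1} \frac{dr}{r} \]
is finite and tends to $0$ as $\rho \downarrow 0$. I would then choose a strictly decreasing sequence $r_n \downarrow 0$ with $r_{n+1} \leq r_n/2$ and $W_n(r_n) \leq 2^{-n}$, and set
\[ E := \bigcup_{n=1}^{\infty} \bigl( E_n \cap B(x, r_n) \bigr). \]
With this definition the limit property is immediate: given $\varepsilon > 0$, pick $n$ with $1/n < \varepsilon$; any $y \in (\Omega \setminus E) \cap B(x, r_n)$ lies outside $E_n$, hence $|u(y) - u(x)| < 1/n < \varepsilon$.

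The main obstacle is verifying that $E$ itself remains $p(.)$-thin at $x$, because a countable union of $p(.)$-thin sets need not be thin. The key observation is the nesting $E_m \subseteq E_n$ whenever $m \leq n$, so for $r$ in the dyadic annulus $(r_{N+1}, r_N]$ one has the inclusion
\[ E \cap B(x, r) \;\subseteq\; \bigl( E_N \cap B(x, r) \bigr) \cup B(x, r_{N+1}). \]
Applying monotonicity and subadditivity of $C_{p(.)}^{\overline{B(x,2r)}}$, together with the elementary inequality $(a+b)^{p'(x)-1} \leq C\bigl(a^{p'(x)-1} + b^{p'(x)-1}\bigr)$, splits the Wiener integral for $E$ into a main contribution bounded on each annulus by $W_N(r_N) \leq 2^{-N}$ (summable in $N$), plus a geometric tail involving the ratio $C_{p(.)}^{\overline{B(x,2r)}}(B(x, r_{N+1}))/C_{p(.)}^{\overline{B(x,2r)}}(B(x, r))$, which is controlled thanks to the doubling choice $r_{N+1} \leq r_N/2$. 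This will give $\int_0^1 \bigl( C_{p(.)}^{\overline{B(x,2r)}}(E \cap B(x,r)) / C_{p(.)}^{\overline{B(x,2r)}}(B(x,r)) \bigr)^{p'(x)-1} \, dr/r < \infty$, so $E$ is $p(.)$-thin at $x$, completing the proof.
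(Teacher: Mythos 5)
Your proof follows the same route as the paper's: invoke Theorem \ref{p(.)-fine continuity} for $p(.)$-fine continuity at $p(.)$-r.q.e.\ points, form the level sets $E_n = \{y\in\Omega : |u(y)-u(x)|\ge 1/n\}$, shrink each to $E_n\cap B(x,r_n)$ with $r_n$ chosen so that the corresponding Wiener integral is below $2^{-n}$, and set $E := \bigcup_n (E_n \cap B(x,r_n))$; the closing argument for $\lim_{y\to x}u(y)=u(x)$ off $E$ is identical. In that sense the proposal matches the paper.

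Where you depart is in trying to actually justify that $E$ is $p(.)$-thin, which the paper simply asserts. Your annulus decomposition and the inclusion $E\cap B(x,r)\subseteq (E_N\cap B(x,r))\cup B(x,r_{N+1})$ for $r\in(r_{N+1},r_N]$ (using $E_m\subseteq E_n$ for $m\le n$) are both correct. The weak point is the tail: the claim that the dyadic choice $r_{N+1}\le r_N/2$ controls
\[
\sum_{N}\int_{r_{N+1}}^{r_N}\left(\frac{C_{p(.)}^{\overline{B(x,2r)}}(B(x,r_{N+1}))}{C_{p(.)}^{\overline{B(x,2r)}}(B(x,r))}\right)^{p'(x)-1}\frac{dr}{r}
\]
does not hold as stated. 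For $r$ just above $r_{N+1}$ the ratio is close to $1$, so each annulus contributes an amount bounded below by a positive constant independent of $N$, and the sum diverges no matter how rapidly $r_n\downarrow 0$. Some extra input is needed here --- e.g.\ restricting to $p(x)\ge 2$ so that $p'(x)-1\le 1$ and the power can be passed through the subadditivity sum $C(E\cap B(x,r))\le\sum_n C(E_n\cap B(x,r_n)\cap B(x,r))$ term by term, or a genuinely different decomposition. Since the paper itself leaves the thinness of $E$ unargued, your attempt is a reasonable reconstruction of what the argument ought to be, but it does not yet close that step.
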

\begin{proof}
Let $u\in\tilde{\mathcal{W}}^{1,p(.)}(\Omega)$. By Theorem \ref{p(.)-fine continuity}, $ u $ is $p(.)$-finely continuous relatively quasieverywhere in $ \Omega $. Then for each $ n $, $ n=1,2,\ldots$, the set 
\begin{equation*}
	E_{n}=\left\lbrace y: y\in\Omega,\left|u(y)-u(x) \right|\geq \dfrac{1}{n}  \right\rbrace 
\end{equation*}
is $p(.)$-thin at $x\in\Omega$. Moreover, there are a positive numbers $ r_{n} $ so small such that 
\begin{equation*}
\int_{0}^{1} \left( \dfrac{C_{p(.)}^{\overline{B(x,2r)}}(E_{n}\cap B(x,r_{n}) \cap B(x,r))}{C_{p(.)}^{\overline{B(x,2r)}}(B(x,r))}\right)^{p^{'}(x)-1}  \ \dfrac{dr}{r}<\dfrac{1}{2^{n}}.
\end{equation*}
Then the set 
\begin{equation*}
E:=\bigcup_{n=1}^{\infty}(E_{n}\cap B(x,r_{n}))
\end{equation*}
is $p(.)$-thin at $x\in\Omega$, and for all $ y\in \Omega\setminus E $, we have that 
 \begin{equation*}
 \left|u(y)-u(x) \right| < \dfrac{1}{n}.
 \end{equation*}
 Then the set $ E $ is the desired subset of $ \Omega $.
\end{proof}       	   
\section{{\textbf{ Variable exposent Sobolev trace spaces }}}
We now turn our attention in the question of traces of Sobolev functions on the boundary of the set of definition. This problem is more delicate than the interior one, since under some regularity assumptions on the variable exponent $ p$ it is always possible to approximate a Sobolev function in the space $ \mathcal{W}^{1,p(.)}$ by smooth function; see \cite{Diening, Edmunds 2,FAN 1,Hasto5,Samko,Zhikov,Zhikov1}, the same is not true up to the boundary. Let $\Omega\subset \mathbb{R}^{n+1}$ be a Lipschitz domain and let $F\in \mathcal{W}^{1,p(.)}(\Omega)$ be a function. In a neighborhood of a boundary point $ x\in \partial\Omega$ we have a local bilipschitz chart which maps part of the boundary to $\mathbb{R}^{n}$ and transported back to $\partial\Omega$ using the inverse chart. Thus according to what was explained in \cite{Diening and Hasto,Diening and Hasto 1,Hasto}, the trace $Tr F$ of $F$ is defined as a function in $L_{loc}^{1}(\partial\Omega)$. Note that if $\mathcal{W}^{1,p(.)}(\Omega)\cap \mathscr{C}(\overline{\Omega})$, then 
$Tr F=F_{|\partial\Omega}$. The trace space 
$Tr \mathcal{W}^{1,p(.)}(\Omega)$ consists of the traces of all functions $F\in \mathcal{W}^{1,p(.)}(\Omega)$. The elements of 
$Tr \mathcal{W}^{1,p(.)}(\Omega)$ are functions on $\partial\Omega$. The quotient norm
\begin{eqnarray*}
	\left\|f \right\|_{Tr \mathcal{W}^{1,p(.)}(\Omega)}:=\inf \left\lbrace \left\|F \right\|_{\mathcal{W}^{1,p(.)}(\Omega)}:\ F\in \mathcal{W}^{1,p(.)}(\Omega)\ \textnormal{and} \ Tr F=f  \right\rbrace  	   
\end{eqnarray*}
makes Tr $\mathcal{W}^{1,p(.)}(\Omega)$ a Banach space. Moreover, we have the following characterization of $\mathcal{W}^{1,p(.)}_{0}(\Omega)$ in terms of traces: 
\begin{center}
	Let $F\in \mathcal{W}^{1,p(.)}(\Omega)$, then $F\in \mathcal{W}^{1,p(.)}_{0}(\Omega)$ if and only if $TrF=0 $.
\end{center}
In particular,  
\begin{eqnarray*}
	F\in\mathcal{W}^{1,p(.)}(\Omega)\cap \mathscr{C}(\overline{\Omega}) \ \textnormal{implies that} \ F(x)=0 \ \textnormal{for every} \ x\in\partial\Omega.
\end{eqnarray*} 
This property is not directly related to the fact that $ u\in \mathcal{W}^{1,p(.)}(\Omega)$, but to the geometry of $ \Omega$.  Notice that the Besov spaces can also be used to describe the traces of Sobolev functions, see \cite{Hasto}. For further results on variable exponent Sobolev trace spaces, see  \cite{Edmunds, Edmunds 1}.

 In view of potential theory a Sobolev function has a distinguished representative which is defined up to a set of capacity zero \cite{Hasto, Hasto2, Hasto3, Hasto4}. Therefore it is possible to look at traces of Sobolev functions on the boundary of the set of definition.
 According to Theorem \ref{quasicontinuous representative} every function $ u $ in $\tilde{\mathcal{W}}^{1,p(.)}(\Omega)$ is defined $p(.)$-relatively quasieverywhere. Moreover, if $ E \subset \mathbb{R}^{n} $ with $C_{p(.)}^{\overline{\Omega}}(E)>0$ then the trace of $ u $ to $ E $ is the restriction to $ E $ of any $p(.)$-relatively quasicontinuous representative of $ u $.     

The purpose of this section is to give a news characterization of variable exponent Sobolev zero trace space $ \mathcal{W}^{1,p(.)}_{0}(\Omega)$. Let us beginning by the following lemma.
\begin{lemma}\label{funct with comparct supp is in W1,P}
	Let $u\in \mathcal{W}^{1,p(.)}(\Omega)$. Assume that $u$ has compact support in $\Omega$. Then $ u\in \mathcal{W}^{1,p(.)}_{0}(\Omega)$.
\end{lemma}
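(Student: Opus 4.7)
The plan is to observe that the conclusion is essentially a restatement of how $\mathcal{W}^{1,p(.)}_{0}(\Omega)$ was defined earlier in the excerpt, so no analytic work is required. Recall that the paper introduced
\begin{equation*}
\mathcal{S}(\Omega) := \left\{ v \in \mathcal{W}^{1,p(.)}(\Omega) : v = v\chi_{K} \text{ for a compact } K \subset \Omega \right\}
\end{equation*}
and declared $\mathcal{W}^{1,p(.)}_{0}(\Omega)$ to be its closure in the $\mathcal{W}^{1,p(.)}(\Omega)$-norm.

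First I would interpret the hypothesis: saying that $u \in \mathcal{W}^{1,p(.)}(\Omega)$ has compact support in $\Omega$ means there is a compact set $K \subset \Omega$ (for instance $K = \operatorname{supp} u$) such that $u$ vanishes almost everywhere on $\Omega \setminus K$. Equivalently, $u = u\chi_{K}$ almost everywhere. This is precisely the defining condition for membership in $\mathcal{S}(\Omega)$, so $u \in \mathcal{S}(\Omega)$.

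Since every subset of a normed space is contained in its norm-closure, $\mathcal{S}(\Omega) \subset \overline{\mathcal{S}(\Omega)}^{\mathcal{W}^{1,p(.)}(\Omega)} = \mathcal{W}^{1,p(.)}_{0}(\Omega)$, and therefore $u \in \mathcal{W}^{1,p(.)}_{0}(\Omega)$. There is no genuine obstacle here; the only point one must be careful about is matching the two ways of expressing compact support (as an honest topological support contained in $\Omega$, or as the existence of a compact $K \subset \Omega$ with $u = u\chi_{K}$ a.e.), but for equivalence classes of measurable functions these coincide. The lemma is presumably stated as a named reference so it can be invoked inside the later trace-space characterization without reopening the definition each time.
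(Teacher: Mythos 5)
Your proof is correct and more elementary than the paper's. You invoke the paper's own definition of $\mathcal{W}^{1,p(.)}_{0}(\Omega)$ as the closure in $\mathcal{W}^{1,p(.)}(\Omega)$ of the set $\left\lbrace v\in \mathcal{W}^{1,p(.)}(\Omega): v=v\chi_{K}\ \text{for a compact}\ K\subset\Omega \right\rbrace$, and simply observe that a compactly supported $u$ already belongs to that set, hence trivially to its closure; no approximation is needed and no regularity hypothesis on the exponent $p$ is used. The paper instead takes a cutoff $\psi\in\mathscr{C}^{\infty}_{0}(\Omega)$ with $\psi=1$ on the support of $u$, invokes a sequence $\psi_{j}$ converging to $u$ in $\mathcal{W}^{1,p(.)}(\Omega)$, and argues that $\psi\psi_{j}\to\psi u=u$. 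That argument is really aimed at the alternative description of $\mathcal{W}^{1,p(.)}_{0}(\Omega)$ as the closure of $\mathscr{C}^{\infty}_{0}(\Omega)$ --- which the paper itself says is available only ``when smooth functions are dense'' --- and, as written, it does not explain where the $\psi_{j}$ come from; for $\psi\psi_{j}$ to lie in $\mathscr{C}^{\infty}_{0}(\Omega)$ the $\psi_{j}$ must be smooth, which requires an extra hypothesis on $p$ such as log-H\"{o}lder continuity. Your route is therefore both simpler and more general, and it matches the definition actually stated in the paper; the paper's route, under the density hypothesis, gives the additional information that such a $u$ can be approximated by $\mathscr{C}^{\infty}_{0}(\Omega)$ functions.
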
  
\begin{proof}
	Let $u\in \mathcal{W}^{1,p(.)}(\Omega)$ and let $\psi\in \mathscr{C}^{\infty}_{0}(\Omega)$ be such that $\psi=1$ on the support of $u$. If a sequence $\psi_{j}$ converges to $u$ in $u\in \mathcal{W}^{1,p(.)}(\Omega)$, then $\psi \psi_{j}$ converges to 
	$\psi u=u$ in $\mathcal{W}^{1,p(.)}(\Omega)$. Thus $ \psi u\in \mathcal{W}^{1,p(.)}_{0}(\Omega)$ and hence $u\in \mathcal{W}^{1,p(.)}_{0}(\Omega)$.    	
\end{proof}
Let us looking $ \mathcal{W}^{1,p(.)}_{0}(\Omega)$ as an ideal of $ \mathcal{W}^{1,p(.)}(\Omega)$. A subspace $I$ of $\mathcal{W}^{1,p(.)}(\Omega)$ is called an ideal if for 
$u\in I,\ v\in \mathcal{W}^{1,p(.)}(\Omega)$, $ \left|v \right| \leqslant \left|u \right|$ a.e. implies that $v\in I$. The closed lattice ideals of the Sobolev spaces $\mathcal{W}^{1,p(.)}(\Omega)$ are those subspaces which consist of all functions which vanish on a prescribed set. To be precise, we have the following result:
\begin{theorem}\label{closed ideal}
	Let $ 1<p^{-}\leqslant p^{+}<\infty$. Then the space $\mathcal{W}^{1,p(.)}_{0}(\Omega)$ is a closed ideal in $\mathcal{W}^{1,p(.)}(\Omega)$. Moreover, there exists a borel set 
	$\mathcal{B}$ such that $$\mathcal{W}^{1,p(.)}_{0}(\Omega)=\mathcal{W}^{1,p(.)}_{0}(\mathcal{B}).$$
\end{theorem}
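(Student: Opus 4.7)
The plan has three pieces: closedness, the ideal property, and constructing the Borel set $\mathcal{B}$. Closedness is immediate from the definition of $\mathcal{W}^{1,p(.)}_0(\Omega)$ as a closure in $\mathcal{W}^{1,p(.)}(\Omega)$. For the ideal property, given $u\in \mathcal{W}^{1,p(.)}_0(\Omega)$ and $v\in \mathcal{W}^{1,p(.)}(\Omega)$ with $|v|\le |u|$ a.e., I would first establish the sublattice property $|u|\in \mathcal{W}^{1,p(.)}_0(\Omega)$: pick approximants $u_n$ with compact support, observe that $|u_n|$ has the same support so still belongs to $\mathcal{W}^{1,p(.)}_0(\Omega)$, and use continuity of $w\mapsto |w|$ in the Sobolev norm (via passing to an a.e.\ convergent subsequence together with $\nabla w=0$ a.e.\ on $\{w=0\}$). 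Splitting $v=v^+-v^-$ with $v^\pm\le |u|$ then reduces the problem to the case $0\le v\le u$ with $u\ge 0$ in $\mathcal{W}^{1,p(.)}_0(\Omega)$.

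For this reduced case, choose nonnegative approximants $u_n\to u$ in $\mathcal{W}^{1,p(.)}(\Omega)$ with compact support $K_n\subset\Omega$, and set $v_n:=v\wedge u_n$. Each $v_n$ is supported in $K_n$, hence Lemma~\ref{funct with comparct supp is in W1,P} gives $v_n\in \mathcal{W}^{1,p(.)}_0(\Omega)$. The chain rule for lattice operations yields
\[
\nabla v_n = \nabla v\,\chi_{\{v\le u_n\}} + \nabla u_n\,\chi_{\{v>u_n\}} \quad \text{a.e.}
\]
Dominated convergence in $L^{p(.)}$ with envelope $|\nabla v|+|\nabla u_n|$, combined with the strong $L^{p(.)}$ convergence of $\nabla u_n$ to $\nabla u$, gives $v_n\to v$ in $\mathcal{W}^{1,p(.)}(\Omega)$, and closedness finishes the argument.

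For the Borel set, I would exploit separability of $\mathcal{W}^{1,p(.)}_0(\Omega)$. Fix a countable dense family $\{u_i\}_{i\ge 1}\subset \mathcal{W}^{1,p(.)}_0(\Omega)$, extend each $u_i$ by zero to $\mathbb{R}^n$, and select a Borel-measurable $p(.)$-quasicontinuous representative $\tilde u_i$ (for instance, via the pointwise Lebesgue limit of integral averages where defined, and zero elsewhere). Set
\[
\mathcal{B} := \bigcup_{i\ge 1}\{x\in \mathbb{R}^n: \tilde u_i(x)\neq 0\},
\]
which is Borel. The characterization of $\mathcal{W}^{1,p(.)}_0(\Omega)$ in terms of quasicontinuous representatives stated earlier shows each $\tilde u_i$ vanishes $p(.)$-quasieverywhere outside $\Omega$, hence $\mathcal{B}$ is contained in $\Omega$ up to a $p(.)$-polar set. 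Given $u\in \mathcal{W}^{1,p(.)}_0(\Omega)$, pass to a subsequence $u_{i_k}\to u$ converging $p(.)$-r.q.e.\ by Theorem~\ref{subsequence which converges $p(.)$-r.q.e.}; since every $\tilde u_{i_k}$ vanishes on $\mathcal{B}^c$, the limit satisfies $\tilde u=0$ $p(.)$-q.e.\ on $\mathcal{B}^c$, placing $u$ in $\mathcal{W}^{1,p(.)}_0(\mathcal{B}):=\{w\in \mathcal{W}^{1,p(.)}(\mathbb{R}^n): \tilde w=0\ p(.)\textnormal{-q.e.\ on }\mathcal{B}^c\}$. Conversely, any such $u$ vanishes $p(.)$-q.e.\ outside $\Omega$ and therefore lies in $\mathcal{W}^{1,p(.)}_0(\Omega)$.

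The main obstacle is promoting the a.e.\ identity for $\nabla v_n$ to strong convergence in the Luxemburg norm of $L^{p(.)}$; this hinges on a uniform modular bound and a Vitali-type convergence result adapted to the variable exponent setting. A secondary technical point is securing Borel representatives of all the $u_i$ uniformly, which, while classical, requires some care in the $p(.)$-framework.
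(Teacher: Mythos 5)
Your argument for closedness and the ideal property is, if anything, more careful than the paper's. The paper sets $v_n := v\wedge\varphi_n$ with $\varphi_n\in\mathscr{C}^\infty_0(\Omega)$ approximating $u$, which does not obviously have compact support when $v$ takes negative values and does not obviously converge to $v$ when $u$ is signed; your preliminary reduction to $0\le v\le u$ with $u\ge 0$ (via the sublattice closure and the split $v=v^+-v^-$) repairs exactly that, and the chain-rule/dominated-convergence step for $\nabla(v\wedge u_n)$ is the right way to fill in the convergence assertion. So on the ideal part you take essentially the same route as the paper but make the compact-support and convergence arguments watertight.

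The Borel-set part is where the two approaches genuinely diverge, and where your proposal has a gap. The paper does \emph{not} construct $\mathcal{B}$ directly: it verifies that $\mathcal{W}^{1,p(.)}(\Omega)$ is a Banach lattice and that $\|fg\|_{\mathcal{W}^{1,p(.)}(\Omega)}\le 2\|f\|_\infty\|g\|_{\mathcal{W}^{1,p(.)}(\Omega)}+2\|g\|_\infty\|f\|_{\mathcal{W}^{1,p(.)}(\Omega)}$ on bounded functions, and then invokes Stollmann's abstract classification of closed ideals in Dirichlet-type spaces \cite{Stollmann}, which produces $\mathcal{B}$ from the closed ideal $\mathcal{W}^{1,p(.)}_0(\Omega)$ without any prior knowledge of where its elements vanish. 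You instead take a countable dense family $\{u_i\}$ and set $\mathcal{B}=\bigcup_i\{\tilde u_i\ne 0\}$. The forward inclusion $\mathcal{W}^{1,p(.)}_0(\Omega)\subset\{w:\tilde w=0\ p(.)\text{-q.e. on }\mathcal{B}^c\}$ is fine, since it only uses the easy direction (limits of compactly supported functions vanish q.e. outside $\Omega$ and hence on $\mathcal{B}^c$). But the converse ``any such $u$ vanishes $p(.)$-q.e. outside $\Omega$ and therefore lies in $\mathcal{W}^{1,p(.)}_0(\Omega)$'' invokes the characterization $\{u:\tilde u=0\ \text{q.e. on }\Omega^c\}\subset\mathcal{W}^{1,p(.)}_0(\Omega)$. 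In the paper's logic that characterization (Theorem~\ref{description of W^{1,p}_{0} by relative capacity}, and the version cited from \cite{Hasto3}) is itself a \emph{consequence} of the closed-ideal theorem you are trying to prove, so relying on it here is circular. Moreover, if that characterization were available a priori, the whole construction would be moot: one could simply take $\mathcal{B}=\Omega$, which is already open, hence Borel. The fact that you need a non-obvious $\mathcal{B}$ at all is precisely because that converse inclusion is not yet known; this is the gap Stollmann's abstract ideal theorem is designed to close, and your explicit construction does not supply a substitute for it.
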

\begin{proof}
	$\mathcal{W}^{1,p(.)}_{0}(\Omega)$ is a closed Banach subspace of $\mathcal{W}^{1,p(.)}(\Omega)$ . Let $u\in \mathcal{W}^{1,p(.)}_{0}(\Omega),\ v\in \mathcal{W}^{1,p(.)}(\Omega)$, $0 \leqslant\left|v \right| \leqslant \left|u \right|$ a.e. Let 
	$\varphi_{n}\in \mathscr{C}^{\infty}_{0}(\Omega)$ such that $\varphi_{n}$ converges to $u$ in $\mathcal{W}^{1,p(.)}_{0}(\Omega)$. Then $ v_{n}:=v\wedge \varphi_{n}$ has compact support and belongs to $\mathcal{W}^{1,p(.)}(\Omega)$. Hence by Lemma \ref{funct with comparct supp is in W1,P}, we have that 
	$v_{n}\in \mathcal{W}^{1,p(.)}_{0}(\Omega)$. Moreover, since $ v_{n}\longrightarrow v\wedge u=v$ in $\mathcal{W}^{1,p(.)}(\Omega)$, we have that $v\in \mathcal{W}^{1,p(.)}_{0}(\Omega)$.\\
	To prove that there exists a borel set 
	$\mathcal{B}$ such that $\mathcal{W}^{1,p(.)}_{0}(\Omega)=\mathcal{W}^{1,p(.)}_{0}(\mathcal{B})$, it suffices to observe two basic facts:
	\begin{itemize}
		\item $\mathcal{W}^{1,p(.)}(\Omega)$ is a Banach lattice, hence 
		$f^{+}\in \mathcal{W}^{1,p(.)}(\Omega)$ and $f \wedge 1\in \mathcal{W}^{1,p(.)}(\Omega)$, whenever $f\in \mathcal{W}^{1,p(.)}(\Omega)$.
		\item By using the H\"{o}lder inequality's, we get that 
		$f,g \in \mathcal{W}^{1,p(.)}(\Omega)\cap L^{\infty}(\Omega)$ implies that
		\begin{eqnarray*}
			\left\|fg\right\|_{\mathcal{W}^{1,p(.)}(\Omega)}\leq 2 	\left\|f\right\|_{\infty}	\left\|g\right\|_{\mathcal{W}^{1,p(.)}(\Omega)}+2\left\|g\right\|_{\infty}	\left\|f\right\|_{\mathcal{W}^{1,p(.)}(\Omega)}.   		
		\end{eqnarray*}    
	\end{itemize}
	Hence the same proof as \cite{Stollmann} yields.
\end{proof}  

Now we can describe $\mathcal{W}^{1,p(.)} _{0}(\Omega)$ as a subspace of $\tilde{\mathcal{W}}^{1,p(.)}(\Omega)$ in the following way:  
\begin{theorem}\label{description of W^{1,p}_{0} by relative capacity}
Let $\Omega\subset \mathbb{R}^{n}$ be an open set. Then
\begin{eqnarray*}
\mathcal{W}^{1,p(.)}_{0}(\Omega)=\left\lbrace u\in \tilde{\mathcal{W}}^{1,p(.)}(\Omega):\ \tilde{u}=0 \ p(.)\textnormal{-r.q.e. on} \ \partial\Omega   \right\rbrace, 	
\end{eqnarray*} 
where $\tilde{u}$ denotes the relatively $p(.)$-quasicontinuous representative of $u$.
\end{theorem}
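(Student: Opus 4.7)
My plan is to prove the two inclusions separately: the forward inclusion is a direct consequence of Theorem \ref{subsequence which converges $p(.)$-r.q.e.} applied to an approximating sequence of test functions, while the reverse inclusion requires a truncation argument combined with the $p(.)$-fine continuity of the representative and a capacitary excision of the polar exceptional set on $\partial\Omega$.

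For the inclusion $\subseteq$, I pick $u\in \mathcal{W}^{1,p(.)}_{0}(\Omega)$ and approximate it by $\varphi_{n}\in \mathscr{C}^{\infty}_{0}(\Omega)$ in $\mathcal{W}^{1,p(.)}(\Omega)$. Extended by zero, each $\varphi_{n}$ lies in $\mathcal{W}^{1,p(.)}(\Omega)\cap \mathscr{C}_{c}(\overline{\Omega})$, so $u\in \tilde{\mathcal{W}}^{1,p(.)}(\Omega)$ immediately. Reading the proof of Theorem \ref{subsequence which converges $p(.)$-r.q.e.} with the exceptional sets $G_{i}$ now regarded as relatively open subsets of $\overline{\Omega}$ (which is legitimate because the $\varphi_{n}$ are continuous on $\overline{\Omega}$ and the modular $\rho^{\Omega}_{1,p(.)}$ is unaffected by the Lebesgue-null boundary), I extract a subsequence $(\varphi_{n_{k}})$ converging $p(.)$-r.q.e.\ on $\overline{\Omega}$ to the relatively $p(.)$-quasicontinuous representative $\tilde{u}$. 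Since $\varphi_{n_{k}}\equiv 0$ on $\partial\Omega$, the limit $\tilde{u}$ vanishes $p(.)$-r.q.e.\ on $\partial\Omega$, as required.

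For the inclusion $\supseteq$, let $u\in \tilde{\mathcal{W}}^{1,p(.)}(\Omega)$ satisfy $\tilde{u}=0$ $p(.)$-r.q.e.\ on $\partial\Omega$. Using that $\mathcal{W}^{1,p(.)}_{0}(\Omega)$ is a closed lattice ideal (Theorem \ref{closed ideal}), I split $u=u^{+}-u^{-}$ and reduce to $u\geq 0$. For $\epsilon>0$ let $u_{\epsilon}:=(u-\epsilon)^{+}$; by continuity of the lattice operations, $u_{\epsilon}\to u$ in $\mathcal{W}^{1,p(.)}(\Omega)$ as $\epsilon\downarrow 0$, so by closedness of $\mathcal{W}^{1,p(.)}_{0}(\Omega)$ it suffices to prove $u_{\epsilon}\in \mathcal{W}^{1,p(.)}_{0}(\Omega)$ for every $\epsilon>0$. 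The r.q.c.\ representative $\tilde{u}_{\epsilon}=(\tilde{u}-\epsilon)^{+}$ still vanishes $p(.)$-r.q.e.\ on $\partial\Omega$. I approximate $u_{\epsilon}$ by $w_{n}\in \mathcal{W}^{1,p(.)}(\Omega)\cap \mathscr{C}_{c}(\overline{\Omega})$ and, via Theorem \ref{subsequence which converges $p(.)$-r.q.e.}, pass to a subsequence converging uniformly to $\tilde{u}_{\epsilon}$ outside a relatively open set $O_{\delta}\subset \overline{\Omega}$ with $C^{\overline{\Omega}}_{p(.)}(O_{\delta})<\delta$, arranged to contain the polar exceptional set where $\tilde{u}_{\epsilon}$ may fail to vanish on $\partial\Omega$. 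For $n$ large, $w_{n}<\epsilon/2$ on $\partial\Omega\setminus O_{\delta}$; I then cut off $(w_{n}-\epsilon/2)^{+}$ by subtracting a capacitary potential of $O_{\delta}$ (whose Sobolev norm is controlled by $\delta$ through the unit-ball property), obtaining a function with compact support in $\Omega$. Lemma \ref{funct with comparct supp is in W1,P} places it in $\mathcal{W}^{1,p(.)}_{0}(\Omega)$, and letting $n\to\infty$ then $\delta\to 0$ gives $u_{\epsilon}\in \mathcal{W}^{1,p(.)}_{0}(\Omega)$.

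The main obstacle I anticipate is precisely this last capacitary excision: converting the $p(.)$-r.q.e.\ vanishing on $\partial\Omega$ into honest compact support inside $\Omega$ without blowing up the $\mathcal{W}^{1,p(.)}$-norm. This step leans on the outer-measure and Choquet properties $(C_{1})$--$(C_{5})$ of $C^{\overline{\Omega}}_{p(.)}$, the unit-ball property of the modular, and ultimately on the $p(.)$-fine continuity and $p(.)$-thinness at boundary points furnished earlier in Section 2.
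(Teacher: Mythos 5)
Your proof is correct in outline, but it takes a genuinely different route from the paper's. The paper does not perform any hands-on truncation or capacitary cut-off. Instead, it invokes Theorem~\ref{closed ideal} and the Stollmann-type structure theorem for closed lattice ideals: since $\mathcal{W}^{1,p(.)}_{0}(\Omega)$ is a closed ideal of $\tilde{\mathcal{W}}^{1,p(.)}(\Omega)$, it must equal $\{u : \tilde{u}=0\ p(.)\text{-r.q.e. on } \mathcal{B}\}$ for some Borel set $\mathcal{B}\subset\overline{\Omega}$; a separate argument with exhausting cut-off functions then shows $C^{\overline{\Omega}}_{p(.)}(\mathcal{B}\cap\Omega)=0$, which lets one replace $\mathcal{B}$ by $\partial\Omega$ in both inclusions. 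The inclusion $\subseteq$ in the paper is purely soft (the right-hand side is a closed subspace containing $\mathscr{C}^{\infty}_{0}(\Omega)$), whereas you derive it from Theorem~\ref{subsequence which converges $p(.)$-r.q.e.}; both work, though you should flag, as you do, that the exceptional sets $G_i$ in that theorem must be taken relatively open in $\overline{\Omega}$ rather than in $\Omega$. For the inclusion $\supseteq$, your construction is more elementary and avoids the abstract ideal theorem, which is a real advantage. The delicate point — whether subtracting (or multiplying by the complement of) a capacitary potential actually yields honest compact support in $\Omega$ — does work out, but not for the reason you gesture at: the potential $\psi_\delta$ need not be continuous, so one cannot argue pointwise. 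What saves the construction is that $O_\delta$ together with the relatively open set $\{w_n<\epsilon/2\}\subset\overline{\Omega}$ covers $\partial\Omega$, so their union is a relatively open neighborhood of $\partial\Omega$ in $\overline{\Omega}$, and its complement in $\overline{\Omega}$ is a compact subset of $\Omega$ in which the cut-off function is a.e. supported (and one still needs to let $n\to\infty$ and $\delta\to 0$ and invoke closedness of $\mathcal{W}^{1,p(.)}_{0}(\Omega)$, with the norm of $\psi_\delta$ controlled by the unit-ball property). In short, your approach is viable and more constructive, but it shoulders the technical burden that the paper delegates to the Stollmann characterization; spelling out the compact-support step as above is essential, since as written the ``subtracting a capacitary potential'' sentence would not survive scrutiny.
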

\begin{proof}
From Theorem \ref{closed ideal}, we deduce that $\mathcal{W}^{1,p(.)} _{0}(\Omega)$ is also a closed ideal of $\tilde{\mathcal{W}}^{1,p(.)}(\Omega)$. Hence there exist a Borel set 
$\mathcal{B}\subset \Omega$ such that
\begin{eqnarray*}
	\mathcal{W}^{1,p(.)}_{0}(\Omega)=\left\lbrace u\in \tilde{\mathcal{W}}^{1,p(.)}(\Omega):\ \tilde{u}=0 \ p(.)\textnormal{-r.q.e on} \ \mathcal{B} \right\rbrace. 	
\end{eqnarray*}
\begin{itemize}
	\item \textbf{Step 1}: we show that $\mathcal{B}\cap\Omega$ is $p(.)$-relatively polar set.\\
	Let $(K_{i})_{i}$ be a sequence of compact subset of 
	$\Omega$ such that $K_{i}\subset K_{i+1}$ and $ \Omega=\bigcup_{i\in\mathbb{N}}K_{i}$. For each $i\in\mathbb{N}  $ there exist a cut-off function $ \varphi_{i}\in \mathscr{C}^{\infty}_{0}(\Omega)$ such that 
	$0\leqslant\varphi_{i}(x)\leqslant 1$ and $\varphi_{i}(x)=1$ on $K_{i}$. Since $\mathscr{C}^{\infty}_{0}(\Omega)\subset \mathcal{W}^{1,p(.)} _{0}(\Omega)$ it follows that $ \varphi_{i}\in \mathcal{W}^{1,p(.)} _{0}(\Omega)$ and $\varphi_{i}(x)=0\ p(.)$-r.q.e on $\mathcal{B}$. Hence $ C_{p(.)}^{\overline{\Omega}}(K_{i}\cap \mathcal{B})=0$. Consequently
\begin{eqnarray*}
C_{p(.)}^{\overline{\Omega}}(\mathcal{B}\cap \Omega)&=&C_{p(.)}^{\overline{\Omega}}(\mathcal{B}\cap \bigcup_{i\in\mathbb{N}}K_{i})\\
&=&\lim_{i\longrightarrow\infty} C_{p(.)}^{\overline{\Omega}}(\mathcal{B}\cap K_{i})\\
&=&0.	
\end{eqnarray*} 
Hence $\mathcal{B}\cap\Omega$ is $p(.)$-relatively polar set.
	\item \textbf{Step 2}: let now $ u\in \tilde{\mathcal{W}}^{1,p(.)}(\Omega)$ such that $\tilde{u}=0 \ p(.)\textnormal{-r.q.e on} \ \partial\Omega$. Then $\tilde{u}=0 \ p(.)\textnormal{-r.q.e on} \ \mathcal{B}$ and hence $\tilde{u}\in \mathcal{W}^{1,p(.)}_{0}(\Omega)$. Consequently
	\begin{eqnarray*}
		\left\lbrace u\in \tilde{\mathcal{W}}^{1,p(.)}(\Omega):\ \tilde{u}=0 \ p(.)\textnormal{-r.q.e. on} \ \partial\Omega \right\rbrace \subset \mathcal{W}^{1,p(.)}_{0}(\Omega). 	
	\end{eqnarray*} 
To prove the converse, observe that the set 	\begin{eqnarray*}
	\left\lbrace u\in \tilde{\mathcal{W}}^{1,p(.)}(\Omega):\ \tilde{u}=0 \ p(.)\textnormal{-r.q.e. on} \ \partial\Omega \right\rbrace	
\end{eqnarray*}
is a closed ideal of $\tilde{\mathcal{W}}^{1,p(.)}(\Omega)$ containing 
$ \mathscr{C}^{\infty}_{0}(\Omega)$. Hence it also contains 
$\mathcal{W}^{1,p(.)}_{0}(\Omega)$.      
\end{itemize}    
\end{proof}
The following corollary gives a necessary and sufficient condition in term of the relative $p(.)$-capacity for the equality $\mathcal{W}^{1,p(.)} _{0}(\Omega)=\tilde{\mathcal{W}}^{1,p(.)}(\Omega)$.
\begin{corollary}
Let $\Omega\subset \mathbb{R}^{n}$ be open. Then the following assertions are equivalent.
\begin{enumerate}
	\item $ C_{p(.)}^{\overline{\Omega}}(\partial\Omega)=0$;
	\item $\mathcal{W}^{1,p(.)} _{0}(\Omega)=\tilde{\mathcal{W}}^{1,p(.)}(\Omega)$.
\end{enumerate}
\end{corollary}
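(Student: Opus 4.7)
Both implications reduce at once to Theorem~\ref{description of W^{1,p}_{0} by relative capacity}, which characterizes $\mathcal{W}^{1,p(.)}_{0}(\Omega)$ as the subspace of $\tilde{\mathcal{W}}^{1,p(.)}(\Omega)$ whose relatively $p(.)$-quasicontinuous representative $\tilde{u}$ vanishes $p(.)$-r.q.e.\ on $\partial\Omega$. The plan is simply to read that identity in both directions.

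For (1) $\Rightarrow$ (2): if $C_{p(.)}^{\overline{\Omega}}(\partial\Omega)=0$, then $\partial\Omega$ is itself $p(.)$-relatively polar, so the requirement ``$\tilde{u}=0$ $p(.)$-r.q.e.\ on $\partial\Omega$'' is vacuous (take the exceptional polar set to be $\partial\Omega$ itself). Hence every element of $\tilde{\mathcal{W}}^{1,p(.)}(\Omega)$ already lies in $\mathcal{W}^{1,p(.)}_0(\Omega)$ by Theorem~\ref{description of W^{1,p}_{0} by relative capacity}, and the reverse inclusion is contained in that same theorem.

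For (2) $\Rightarrow$ (1) I would use a localization by cut-offs. Fix $x_0\in\partial\Omega$ and $r>0$, and pick $\eta\in\mathscr{C}^{\infty}_{0}(\mathbb{R}^{n})$ with $0\leq\eta\leq 1$, $\eta\equiv 1$ on $B(x_0,r)$ and $\mathrm{supp}\,\eta\subset B(x_0,2r)$. Then $\eta|_{\Omega}\in\mathcal{W}^{1,p(.)}(\Omega)\cap\mathscr{C}_{c}(\overline{\Omega})\subset\tilde{\mathcal{W}}^{1,p(.)}(\Omega)$, and since $\eta$ is already continuous it is, up to a $p(.)$-relatively polar set, its own relatively $p(.)$-quasicontinuous representative by the uniqueness part of Theorem~\ref{quasicontinuous representative}. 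Hypothesis (2) yields $\eta|_{\Omega}\in \mathcal{W}^{1,p(.)}_{0}(\Omega)$, so Theorem~\ref{description of W^{1,p}_{0} by relative capacity} forces $\tilde{\eta}=0$ $p(.)$-r.q.e.\ on $\partial\Omega$. Because $\eta\equiv 1$ on $B(x_0,r)\cap\partial\Omega$, this whole set must lie in the exceptional polar set, whence $C_{p(.)}^{\overline{\Omega}}(B(x_0,r)\cap\partial\Omega)=0$. Covering $\partial\Omega$ by countably many such balls (possible by second countability of $\mathbb{R}^n$) and applying the countable subadditivity property $(C_5)$ of $C_{p(.)}^{\overline{\Omega}}$, we conclude $C_{p(.)}^{\overline{\Omega}}(\partial\Omega)=0$.

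The only delicate point is the argument in (2) $\Rightarrow$ (1): one must know that a continuous function agrees (up to a $p(.)$-relatively polar set) with its $p(.)$-r.q.c.\ representative, so that the vanishing statement provided by Theorem~\ref{description of W^{1,p}_{0} by relative capacity} really propagates to the value $1$ on $B(x_0,r)\cap\partial\Omega$. Everything else is routine capacity bookkeeping via $(C_5)$.
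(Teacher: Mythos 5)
Your proof is correct, and since the paper presents this corollary without proof (treating it as an immediate consequence of Theorem~\ref{description of W^{1,p}_{0} by relative capacity}), your write-up fills exactly the implicit gap. The implication (1)~$\Rightarrow$~(2) is just reading the characterization $\mathcal{W}^{1,p(.)}_{0}(\Omega)=\{u\in\tilde{\mathcal{W}}^{1,p(.)}(\Omega):\tilde{u}=0\ p(.)\text{-r.q.e.\ on}\ \partial\Omega\}$ together with the observation that when $\partial\Omega$ is itself $p(.)$-relatively polar the defining condition becomes vacuous; (2)~$\Rightarrow$~(1) genuinely needs an argument, and your localization is the natural one: a cutoff $\eta\in\mathscr{C}^{\infty}_{0}(\mathbb{R}^{n})$ equal to $1$ near $x_0\in\partial\Omega$ restricts to an element of $\mathcal{W}^{1,p(.)}(\Omega)\cap\mathscr{C}_{c}(\overline{\Omega})\subset\tilde{\mathcal{W}}^{1,p(.)}(\Omega)$, continuity makes it its own $p(.)$-r.q.c.\ representative (up to a polar set) by the uniqueness in Theorem~\ref{quasicontinuous representative}, and hypothesis (2) plus Theorem~\ref{description of W^{1,p}_{0} by relative capacity} then force $C_{p(.)}^{\overline{\Omega}}(B(x_0,r)\cap\partial\Omega)=0$; countable subadditivity $(C_5)$ finishes it. One presentational remark you may wish to make explicit: Theorem~\ref{quasicontinuous representative} as literally stated in the paper gives $\tilde{u}:\Omega\to\mathbb{R}$, whereas both Theorem~\ref{description of W^{1,p}_{0} by relative capacity} and your argument evaluate $\tilde{u}$ on $\partial\Omega$; this is a small inconsistency in the paper that your proof silently (and correctly) resolves by taking the r.q.c.\ representative to live on $\overline{\Omega}$, which is indeed the natural domain given that $\tilde{\mathcal{W}}^{1,p(.)}(\Omega)$ is built from $\mathscr{C}_{c}(\overline{\Omega})$ and the relative capacity is a set function on $\overline{\Omega}$.
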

The next theorem present another description of the space $\mathcal{W}^{1,p(.)} _{0}(\Omega)$.
\begin{theorem}
	Let $\Omega\subset \mathbb{R}^{n}$ be open. Suppose that 
	$F$ is a closed subset of $\mathbb{R}^{n}$ such that $\Omega=\mathbb{R}^{n}\setminus F$. If $ F $ is of $p(.) $-Sobolev capacity zero, then  
		\begin{eqnarray*}
			\mathcal{W}^{1,p(.)}_{0}(\Omega)=\left\lbrace u_{\arrowvert\Omega}: \  u\in \mathcal{W}^{1,p(.)}(\mathbb{R}^{n})\right\rbrace. 	
		\end{eqnarray*} 
\end{theorem}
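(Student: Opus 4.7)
The plan is to prove the two inclusions separately. The inclusion $\mathcal{W}^{1,p(.)}_{0}(\Omega)\subseteq \{u_{|\Omega}:u\in\mathcal{W}^{1,p(.)}(\mathbb{R}^{n})\}$ is the easy direction: if $u\in \mathcal{W}^{1,p(.)}_{0}(\Omega)$, pick a sequence $v_j\in \mathcal{W}^{1,p(.)}(\Omega)$ with each $v_j$ supported in a compact subset $K_j\subset\Omega$ and $v_j\to u$ in $\mathcal{W}^{1,p(.)}(\Omega)$. Each $v_j$ extended by zero belongs to $\mathcal{W}^{1,p(.)}(\mathbb{R}^n)$, and the extended sequence is Cauchy; its limit $\bar u\in\mathcal{W}^{1,p(.)}(\mathbb{R}^n)$ restricts to $u$ on $\Omega$. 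So I would dispatch this direction quickly.

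For the nontrivial inclusion, fix $u\in\mathcal{W}^{1,p(.)}(\mathbb{R}^{n})$; the task is to prove $u_{|\Omega}\in\mathcal{W}^{1,p(.)}_{0}(\Omega)$. Since $F$ has measure zero (being of zero Sobolev $p(.)$-capacity), the restriction $u_{|\Omega}$ is in $\mathcal{W}^{1,p(.)}(\Omega)$ with the same distributional gradient. The strategy is to build, for each $\varepsilon>0$, an approximant with compact support in $\Omega$ and then invoke Lemma \ref{funct with comparct supp is in W1,P}. Using $C_{p(.)}(F)=0$, I would choose admissible $v_k\in\mathcal{A}_{p(.)}(F)$ with $\rho_{1,p(.)}(v_k)\to 0$, and replace $v_k$ by $(v_k\wedge 1)\vee 0$ so that $0\le v_k\le 1$ with $v_k=1$ on an open neighborhood $U_k\supset F$. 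Then $(1-v_k)$ vanishes on $U_k$, so $(1-v_k)u$ is supported in the closed set $\mathbb{R}^n\setminus U_k\subset \Omega$. To secure compactness I multiply further by a smooth cutoff $\psi_R\in \mathscr{C}_c^\infty(\mathbb{R}^n)$ with $\psi_R=1$ on $B(0,R)$; and to control products I first truncate $u$ at height $M$, writing $u_M=(u\wedge M)\vee(-M)$. The candidate approximant is then
\begin{equation*}
w_{k,R,M}:=\psi_R(1-v_k)u_M,
\end{equation*}
which has compact support contained in $\Omega$; Lemma \ref{funct with comparct supp is in W1,P} (extended obviously to the bounded case) places it in $\mathcal{W}^{1,p(.)}_{0}(\Omega)$.

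It remains to show that, letting $M,R\to\infty$ and $k\to\infty$ in an appropriate order, $w_{k,R,M}\to u_{|\Omega}$ in $\mathcal{W}^{1,p(.)}(\Omega)$. Standard lattice properties give $u_M\to u$ in $\mathcal{W}^{1,p(.)}$ as $M\to\infty$, and $\psi_R u_M\to u_M$ as $R\to\infty$ by dominated convergence applied to $|u_M|^{p(.)}$ and $|\nabla u_M|^{p(.)}$. The delicate step is the limit in $k$: expanding the Leibniz rule,
\begin{equation*}
\nabla\bigl((1-v_k)\psi_R u_M\bigr)=(1-v_k)\nabla(\psi_R u_M)-\psi_R u_M\nabla v_k,
\end{equation*}
the first term converges to $\nabla(\psi_R u_M)$ in $L^{p(.)}$ by dominated convergence (after passing to a subsequence with $v_k\to 0$ a.e., which exists since $\rho_{1,p(.)}(v_k)\to 0$ implies convergence to $0$ in $L^{p(.)}$). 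The second term is bounded in $L^{p(.)}$-norm by $M\|\psi_R\|_\infty\,\|\nabla v_k\|_{L^{p(.)}}$, and since $\|\nabla v_k\|_{L^{p(.)}}\to 0$ by the unit ball property applied to $\rho_{1,p(.)}(v_k)\to 0$, this term vanishes in the limit. The $L^{p(.)}$ convergence of the functions themselves is even easier.

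The main obstacle I anticipate is keeping the three parameters $M,R,k$ in a consistent diagonal order so that the approximation really lands in $\mathcal{W}^{1,p(.)}_{0}(\Omega)$; a standard $3\varepsilon$ argument choosing $M$ first (to within $\varepsilon/3$ of $u$), then $R=R(M)$ large, then $k=k(R,M)$ large handles this. Everything else is the modular/norm equivalence machinery already used in Theorem \ref{subsequence which converges $p(.)$-r.q.e.}.
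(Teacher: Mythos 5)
Your proof is correct, but it takes a genuinely different route from the paper. The paper's argument is a one-line reduction: it invokes the characterization from \cite{Hasto3} that, under the standing hypotheses,
\begin{equation*}
\mathcal{W}^{1,p(.)}_{0}(\Omega)\cong\left\lbrace \tilde{u}\in \mathcal{W}^{1,p(.)}(\mathbb{R}^{n}):\ \tilde{u}=0 \ p(.)\text{-q.e. on } \mathbb{R}^{n}\setminus\Omega \right\rbrace,
\end{equation*}
where $\tilde{u}$ is the quasicontinuous representative. Since $C_{p(.)}(F)=0$, the vanishing condition on $F=\mathbb{R}^n\setminus\Omega$ is automatic, so the right-hand side collapses to all restrictions of $\mathcal{W}^{1,p(.)}(\mathbb{R}^n)$-functions, and the identity follows immediately. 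You, instead, give a direct constructive approximation: you build a compactly supported approximant $\psi_R(1-v_k)u_M$ in $\Omega$ from admissible capacity functions $v_k$ for $F$, truncations $u_M$, and radial cutoffs $\psi_R$, then pass to the limit via the Leibniz rule, modular–norm equivalence, and dominated convergence (after extracting a subsequence with $v_k\to 0$ a.e.). Your argument is longer but more self-contained: it relies only on the lattice structure, Lemma~\ref{funct with comparct supp is in W1,P}, and elementary convergence facts, whereas the paper's proof imports the q.e.-vanishing identification of $\mathcal{W}^{1,p(.)}_{0}(\Omega)$ wholesale. One remark: in your final $3\varepsilon$-scheme it would be cleaner to say explicitly that, because $\rho_{1,p(.)}(v_k)\to 0$, the unit ball property yields $\|v_k\|_{\mathcal{W}^{1,p(.)}}\to 0$, so both $\|\nabla v_k\|_{L^{p(.)}}\to 0$ and (along a subsequence) $v_k\to 0$ a.e.; your sketch implicitly uses both facts but names only the second.
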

\begin{proof}
According to \cite{Hasto3} we can identify $\mathcal{W}^{1,p(.)} _{0}(\Omega)$ with a subspace of $\mathcal{W}^{1,p(.)}(\mathbb{R}^{n})$ in the following way:  
\begin{eqnarray*}
	\mathcal{W}^{1,p(.)}_{0}(\Omega)=\left\lbrace \tilde{u}\in \mathcal{W}^{1,p(.)}(\mathbb{R}^{n}):\ \tilde{u}=0 \ p(.)\textnormal{-q.e. in} \ \mathbb{R}^{n}\setminus \Omega  \right\rbrace, 	
\end{eqnarray*} 
where $\tilde{u}$ denotes the $p(.)$-quasi continuous representative of $u$. Since 
\begin{eqnarray*}
C_{p(.)}^{\overline{\Omega}}(\partial\Omega)=C_{p(.)}^{\overline{\Omega}}(F)\leqslant C_{p(.)}(F)=0,	
\end{eqnarray*} 
we deduce that
\begin{eqnarray*}
	\mathcal{W}^{1,p(.)}_{0}(\Omega)=\left\lbrace u_{\arrowvert\Omega}: \  u\in \mathcal{W}^{1,p(.)}(\mathbb{R}^{n})\right\rbrace. 	
\end{eqnarray*}   
\end{proof}
In the sequel, we prove that every function
$ u\in\mathcal{W}^{1,p(.)}_{0}(\Omega)\cap \mathscr{C}(\overline{\Omega})$ is zero everywhere on the boundary $\partial\Omega$ if and only if $\Omega$ is $p(.)$-regular in capacity.
\begin{definition}
	Let $\Omega\subset \mathbb{R}^{n}$ be an open set. We say that $\Omega$ is $p(.)$-regular in capacity if $ C_{p(.)}(B(x,r)\setminus\Omega)>0$ for all $ x\in\partial\Omega $.	
\end{definition}	
\begin{definition}
	We say that an open set $\Omega\subset \mathbb{R}^{n}$ has the $p(.)$-zero property if $u\in C(\partial\Omega)$ and $u=0\ p(.)$-q.e on $\partial\Omega$ implies that $u(x)=0$ for every 
	$x\in \partial\Omega$.
\end{definition}
\begin{theorem}\label{equivalece p capacity}
Let $ \Omega\subset \mathbb{R}^{n}$ be an open set. Then the following assertions are equivalent.
\begin{enumerate}
	\item $\Omega$ has the $p(.)$-zero property;
	\item  Every function $ u\in\mathcal{W}^{1,p(.)}_{0}(\Omega)\cap \mathscr{C}(\overline{\Omega})$ is zero everywhere on the boundary $\partial\Omega$;
	\item $\Omega$ is $p(.)$-regular in capacity;
	\item $ C_{p(.)}(B(x,r)\cap\partial\Omega)>0$ for all 
	$x\in \partial\Omega$;
	\item $ C_{p(.)}^{\overline{\Omega}}(B(x,r)\cap\partial\Omega)>0$ for all 
	$x\in \partial\Omega$;
	\item $u\in \mathscr{C}(\partial\Omega)$ and $u=0$ $p(.)$-r.q.e. on $ \partial\Omega $ implies that 
	$u(x)=0$ for every $x\in \partial\Omega$.  	 
\end{enumerate}	 
\end{theorem}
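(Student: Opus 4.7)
The plan is to split the six conditions into two natural groups and bridge them. Group I consists of $(1),(3),(4)$, which are formulated with the Sobolev $p(.)$-capacity $C_{p(.)}$; Group II consists of $(2),(5),(6)$, which involve the relative capacity $C_{p(.)}^{\overline{\Omega}}$ and the space $\mathcal{W}^{1,p(.)}_{0}(\Omega)$. The bridge is the elementary inequality $C_{p(.)}^{\overline{\Omega}}(E)\leq C_{p(.)}(E)$ for every $E\subset\overline{\Omega}$, obtained by restricting an admissible function from $\mathcal{W}^{1,p(.)}(\mathbb{R}^{n})$ to $\Omega$ and noting that smooth, compactly supported restrictions lie in $\mathcal{W}^{1,p(.)}(\Omega)\cap\mathscr{C}_{c}(\overline{\Omega})\subset\tilde{\mathcal{W}}^{1,p(.)}(\Omega)$.

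For Group I, I would first prove $(1)\Leftrightarrow(4)$ by contrapositive and continuity. If $(4)$ fails at $(x_{0},r_{0})$, a continuous bump $v$ on $\partial\Omega$ with $v(x_{0})=1$ supported in $B(x_{0},r_{0}/2)\cap\partial\Omega$ vanishes outside a $C_{p(.)}$-null set, contradicting $(1)$; conversely, a counterexample $v$ to $(1)$ localizes $\{v\neq 0\}$ near $x_{0}$ by continuity, forcing some $B(x_{0},r)\cap\partial\Omega$ to have zero capacity and contradicting $(4)$. Next, $(4)\Rightarrow(3)$ is immediate from monotonicity of $C_{p(.)}$ since $B(x,r)\cap\partial\Omega\subset B(x,r)\setminus\Omega$. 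The reverse $(3)\Rightarrow(4)$ is the main obstacle: assuming $C_{p(.)}(B(x_{0},r_{0})\cap\partial\Omega)=0$, decompose
\begin{equation*}
B(x_{0},r_{0})\setminus\Omega = \bigl(B(x_{0},r_{0})\cap\partial\Omega\bigr)\cup\bigl(B(x_{0},r_{0})\cap\mathrm{int}(\Omega^{c})\bigr);
\end{equation*}
if the second piece is empty then $B(x_{0},r_{0})\setminus\Omega$ has capacity zero and $(3)$ fails, while if it is nonempty then the two disjoint nonempty open sets $\Omega\cap B(x_{0},r_{0})$ and $\mathrm{int}(\Omega^{c})\cap B(x_{0},r_{0})$ inside the connected ball $B(x_{0},r_{0})$ would be separated only by a set of zero $p(.)$-capacity, contradicting the removability of $p(.)$-capacity-zero sets for Sobolev functions from nonlinear potential theory \cite{Heinonen}.

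For Group II, I would use Theorem \ref{description of W^{1,p}_{0} by relative capacity} throughout. For $(6)\Rightarrow(2)$: any $u\in\mathcal{W}^{1,p(.)}_{0}(\Omega)\cap\mathscr{C}(\overline{\Omega})$ coincides with its quasicontinuous representative, so the theorem gives $u=0$ $p(.)$-r.q.e.\ on $\partial\Omega$, and applying $(6)$ to $u|_{\partial\Omega}\in\mathscr{C}(\partial\Omega)$ forces $u\equiv 0$ on $\partial\Omega$. For $(2)\Rightarrow(5)$: if $C_{p(.)}^{\overline{\Omega}}(B(x_{0},r_{0})\cap\partial\Omega)=0$, pick $\psi\in\mathscr{C}^{\infty}_{0}(\mathbb{R}^{n})$ with $\psi(x_{0})=1$ and support in $B(x_{0},r_{0}/2)$; the restriction $\psi|_{\Omega}$ lies in $\tilde{\mathcal{W}}^{1,p(.)}(\Omega)$ and vanishes $p(.)$-r.q.e.\ on $\partial\Omega$, so by Theorem \ref{description of W^{1,p}_{0} by relative capacity} it belongs to $\mathcal{W}^{1,p(.)}_{0}(\Omega)\cap\mathscr{C}(\overline{\Omega})$, yet $\psi(x_{0})=1$, contradicting $(2)$. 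The remaining $(5)\Rightarrow(6)$ is the same contrapositive continuity argument as in $(4)\Rightarrow(1)$, with $C_{p(.)}^{\overline{\Omega}}$ replacing $C_{p(.)}$.

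The two groups are linked by $(1)\Rightarrow(2)$ and $(6)\Rightarrow(1)$. For $(1)\Rightarrow(2)$, I would invoke the standard identification $\mathcal{W}^{1,p(.)}_{0}(\Omega)=\{u\in\mathcal{W}^{1,p(.)}(\mathbb{R}^{n}):\tilde{u}=0\ p(.)\text{-q.e.\ on }\mathbb{R}^{n}\setminus\Omega\}$ from \cite{Hasto3}, already used in the preceding proof, which forces $u|_{\partial\Omega}=0$ $p(.)$-q.e.\ for any $u\in\mathcal{W}^{1,p(.)}_{0}(\Omega)\cap\mathscr{C}(\overline{\Omega})$, so $(1)$ concludes. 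For $(6)\Rightarrow(1)$, the bridge inequality shows that any $v\in\mathscr{C}(\partial\Omega)$ with $v=0$ $p(.)$-q.e.\ also satisfies $v=0$ $p(.)$-r.q.e., whence $(6)$ forces $v\equiv 0$. The main difficulty throughout is the removability step underlying $(3)\Rightarrow(4)$, which rests on a careful adaptation of nonlinear potential theory to the variable exponent setting.
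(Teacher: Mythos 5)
Your proposal is logically sound and reaches the same conclusion, but it reorganizes the argument: the paper proves a single cycle $(1)\Rightarrow(2)\Rightarrow(3)\Rightarrow(4)\Rightarrow(5)\Rightarrow(6)\Rightarrow(1)$, whereas you split the assertions into two strongly-connected groups ($\{1,3,4\}$ around $C_{p(.)}$ and $\{2,5,6\}$ around $C_{p(.)}^{\overline{\Omega}}$) and link them via the inequality $C_{p(.)}^{\overline{\Omega}}\leq C_{p(.)}$ together with the zero-trace identification. The bump-function contrapositives and the appeal to Theorem \ref{description of W^{1,p}_{0} by relative capacity} are the same ideas the paper uses in its $(2)\Rightarrow(3)$ and $(4)\Rightarrow(5)$ steps; your pairing of $(1)\Leftrightarrow(4)$ as a direct continuity argument is a clean shortcut that the paper only obtains by composing several implications. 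The one place where the proposal is noticeably weaker than the paper is the hard implication $(3)\Rightarrow(4)$: you invoke ``removability of $p(.)$-capacity-zero sets'' and cite \cite{Heinonen}, which is a constant-exponent reference, and you flag the adaptation as a difficulty without resolving it. The paper sidesteps any black-box removability theorem by an explicit construction: it takes $u\in\mathscr{C}^{\infty}_{0}(B(x_{0},r))$ with $u=1$ on a smaller ball, sets $v=u\chi_{\Omega}$, uses the characterization of $\mathcal{W}^{1,p(.)}_{0}(\Omega)$ to show $v$ has a Sobolev extension to $\mathbb{R}^{n}$, observes that the weak gradient vanishes a.e.\ on a connected sub-ball because $\partial\Omega$ there has Lebesgue measure zero (capacity zero implies measure zero), and derives a contradiction with $v$ taking both values $0$ and $1$ on sets of positive measure. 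If you carry out that explicit construction instead of appealing to an unadapted removability theorem, your argument becomes fully self-contained and equivalent to the paper's.
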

\begin{proof}
Before proving the theorem, we recall that from Theorem \ref{description of W^{1,p}_{0} by relative capacity} we have the following characterization of $ \mathcal{W}^{1,p(.)}_{0}(\Omega)$:
\begin{eqnarray*}
	\mathcal{W}^{1,p(.)}_{0}(\Omega)=\left\lbrace u\in \tilde{\mathcal{W}}^{1,p(.)}(\Omega):\ \tilde{u}=0 \ p(.)\textnormal{-r.q.e. on} \ \partial\Omega   \right\rbrace, 	
\end{eqnarray*} 
where $\tilde{u}$ denotes the relatively $p(.)$-quasicontinuous representative of $u$.\\ 
$ (1)\Longrightarrow (2) $: This follows from the fact that every function $ u\in\mathcal{W}^{1,p(.)}_{0}(\Omega)\cap \mathscr{C}(\overline{\Omega})$ is continuous and is zero everywhere on the boundary $\partial\Omega$.\\ 
$(2)\Longrightarrow (3)$: Assume that $\Omega$ is not $p(.)$-regular in capacity. Then there exist $x_{0}\in \partial\Omega$ such that $ C_{p(.)}(B(x,r)\cap\partial\Omega)=0$. Since 
\begin{eqnarray*}
C_{p(.)}^{\overline{\Omega}}(B(x_{0},r)\setminus\Omega)\leqslant C_{p(.)}(B(x_{0},r)\setminus\Omega),		
\end{eqnarray*}
we get that
\begin{eqnarray*}
	C_{p(.)}^{\overline{\Omega}}(B(x_{0},r)\cap\partial\Omega)=0.		
\end{eqnarray*}
Let $ \varphi\in \mathscr{C}^{\infty}_{0}(B(x_{0},r))$ be such that $ \varphi =1 $ on $B(x_{0},\frac{r}{2})$. Then $\varphi_{{|}_{\Omega}}\in \mathcal{W}^{1,p(.)}_{0}(\Omega)\cap \mathscr{C}(\overline{\Omega})$, but $ \varphi(x_{0})\neq0 $.\\ 
$(3)\Longrightarrow (4)$: Let  $x_{0}\in \partial\Omega$ such that $ C_{p(.)}(B(x,r)\cap\partial\Omega)=0$. We have that 
\begin{eqnarray*}
	B(x_{0},r)\setminus\Omega=(B(x_{0},r)\setminus\overline{\Omega})\cup((B(x_{0},r)\cap \partial\Omega)). 		
\end{eqnarray*} 
By the monotonicity of the $p(.)$-Sobolev capacity, we get that
\begin{eqnarray*}
	C_{p(.)}(B(x_{0},r)\setminus\Omega)&\leqslant& C_{p(.)}(B(x_{0},r)\setminus\overline{\Omega})+C_{p(.)}(B(x_{0},r)\cap \partial\Omega)\\
	&\leqslant& C_{p(.)}(B(x_{0},r)\setminus\overline{\Omega}).  		
\end{eqnarray*}
Since $ \Omega $ is $p(.)$-regular in capacity, we get that
$ C_{p(.)}(B(x_{0},r)\setminus\Omega)>0$.
It follows that $C_{p(.)}(B(x_{0},r)\setminus\overline{\Omega})>0$. Hence
$ B(x_{0},r)\setminus\overline{\Omega}\neq\varnothing$. Consequently there exist $x_{1}$ and $ \varepsilon_{1}>0$ such that $\overline{B(x_{1},\varepsilon_{1})}\subset B(x_{0},r)\setminus\overline{\Omega}$. Moreover there exist $x_{2}$ and $ \varepsilon_{2}>0$ such that $\overline{B(x_{2},\varepsilon_{2})}\subset B(x_{0},r)\cap\Omega$.\\
Let $\varepsilon\in(0,r)$ be such that
$ B(x_{1},\varepsilon_{1})\cup B(x_{2},\varepsilon_{2})\subset B(x_{0},\varepsilon)$. Let $u\in \mathscr{C}^{\infty}_{0}(B(x_{0},r))$ be such that $ u=1 $ on $B(x_{0},\varepsilon)$. Define the function $ v $ by 
	\begin{equation*}	
v:=
\left\{
\begin{array}{rl}
u & \text{on}\ B(x_{0},r)\cap\Omega; \\
0 & \text{on}\ \Omega^{c}. 
\end{array}
\right.
\end{equation*} 		   
Then $ v\in  \mathcal{W}^{1,p(.)}_{0}(\Omega)$ and $\nabla v=0$ a.e. on $B(x_{0},\varepsilon)$. Consequently $ v=C $ a.e. on $B(x_{0},\varepsilon)$. This contradicts the fact that $ v=1 $ on $ B(x_{0},\varepsilon_{2})$ and $ v=0$ on $ B(x_{0},\varepsilon_{1})$.\\ 
$(4)\Longrightarrow (5)$: Suppose that there exist $ x_{0}\in \partial\Omega$ such that \begin{equation*}
C_{p(.)}^{\overline{\Omega}}(B(x_{0},r)\cap \partial\Omega)=0. 
\end{equation*}
Let $ u\in \mathscr{C}^{\infty}_{0}(B(x_{0},r)) $ be such that $ u=1 $ on $ B(x_{0},\frac{r}{2})$. Then $u_{{|}_{\Omega}}\in \mathcal{W}^{1,p(.)}_{0}(\Omega)$. We define the function $ v $ by 
\begin{equation*}	
v:=
\left\{
\begin{array}{rl}
u & \text{on}\ \Omega; \\
0 & \text{on}\ \Omega^{c}. 
\end{array}
\right.
\end{equation*}
Then $ v\in \mathcal{W}^{1,p(.)}_{0}(\Omega)\subset \mathcal{W}^{1,p(.)}(\Omega)$ and $\nabla v=0$ a.e. on $B(x_{0},\frac{r}{2})$. Consequently $ v=1 $ a.e. on $B(x_{0},\frac{r}{2})$. This implies that $\tilde{v}=1 \ p(.) \textnormal{-q.e. on} \ B(x_{0},\frac{r}{2})$, where $ \tilde{v} $ denotes the $ p(.)$-quasicontinuous representative of $v$. In particular, $\tilde{v}=1 \ p(.) \textnormal{-q.e. on} \ B(x_{0},\frac{r}{2})\cap\partial\Omega$ and $\tilde{v}=0 \ p(.) \textnormal{-q.e.}$ on $ \mathbb{R}^{n} \setminus \Omega$. It follows that $ C_{p(.)}(B(x_{0},\frac{r}{2})\cap \partial\Omega)=0$.\\
$(5)\Longrightarrow (6)$: We suppose that there exist $ x_{0}\in\partial\Omega$ such that $ u(x_{0})\neq 0$. Since $ u\in \mathscr{C}(\partial\Omega)$, we deduce that there exist an open ball 
$ B(x_{0},r)$ such that $ u(x)\neq 0$ for all $ x\in  B(x_{0},r)\cap \partial\Omega $. Using the fact that $u=0 \ p(.) \textnormal{-q.e. on} \ \partial\Omega$ it follow that $ C_{p(.)}^{\overline{\Omega}}(B(x_{0},r)\cap \partial\Omega)=0$.\\
$(6)\Longrightarrow (1)$: Let $ u\in \mathscr{C}(\partial\Omega)$ be such that $ u=0 \ p(.) $-q.e. on $\partial\Omega$. By assertion 
$(6)$, we deduce that $ u(x)=0 $ for every 
$x\in\partial\Omega$.   		                
\end{proof}
\section{\textbf{Dirichlet problem in  nonlinear harmonic space} }

Let $\Omega\subset \mathbb{R}^{n}$ be a bounded open set and $p\in\mathscr{P}(\Omega)$ with $ 1<p^{-}\leqslant p(x)\leqslant p^{+}<n$. 
We will assume that the variable exponent $ p $ satisfies the logarithmic H\"{o}lder continuity condition introduced by Zhikov in \cite{Zhikov}, namely we suppose that
\begin{equation*}
\left|p(x)-p(y) \right|\leq \dfrac{C}{-\log(\left|x-y \right| )} \ \text{for} \ (x,y)\in\Omega\times\Omega \ \text{with} \ \left|p(x)-p(y) \right|<\dfrac{1}{2},  
\end{equation*}
where $ C > 0 $ is independent of $ (x, y) $. Under this assumption and as it was observed in \cite{Zhikov}, the space of smooth functions is dense in the variable exponent Sobolev space. We refer also to the monograph \cite{Hasto} for the details in this direction.

For $f\in \mathscr{C}(\partial\Omega)$, we consider the Dirichlet problem:
\begin{equation*}	
	\left\{
	\begin{array}{rl}	
		 \mathbf{\mathscr{L}_{p(.)}}u:=-\Delta_{p(.)}u+\mathscr{B}(.,u)=0 & \text{in}\ \Omega; \\
		u=f & \text{on}\ \partial\Omega, 
	\end{array}
	\right.
\end{equation*}
where $\Delta_{p(.)}:=\div\left( \left| \nabla u \right|^{p(.)-2} \nabla u \right)$ is the $p(.)$-Laplacian operator and  $\mathscr{B}:\Omega
\times \mathbb{R}\rightarrow \mathbb{R}$ is a given Carath\'{e}odory
functions satisfies the following structural condition for a positive constant $ C $:
\begin{itemize}
	\item $ \left| \mathscr{B}(.,\zeta)  \right|\leqslant a(.)+C\left|\zeta\right|^{p(.)-1}$ a.e. in $ \Omega $ and for all $\zeta \in \mathbb{R}^{n}$, where $ a $ is a positive measurable function lying in 
	$ L^{p^{'}}(\Omega)$.
	\item $ \zeta\mapsto \mathscr{B}(.,\zeta)  $ is increasing function on $ \Omega $.  
\end{itemize}
A typical example when the above structural condition hold is $ \mathscr{B}(.,u) = \left|u \right|^{p(.)-2}u$.

In the sequel, we discuss the nonlinear potential theory associated with the 
$\mathcal{H}_{\mathscr{L}_{p(.)}}$-harmonic sheaf defined as follows:
\begin{equation*}
\mathcal{H}_{\mathscr{L}_{p(.)}}(\Omega)=
\left\lbrace u\in \mathscr{C}(\Omega):\mathscr{L}_{p(.)}u=0 \right\rbrace. 
\end{equation*}
Element in the set $ \mathcal{H}_{\mathscr{L}_{p(.)}}(\Omega)$ are called $\mathscr{L}_{p(.)}$-harmonic on $ \Omega $. First, we give some topics from the theory of abstract nonlinear harmonic spaces.\\
Let $ (X, \mathcal{T} )$ be a topological Hausdorff space, locally connected and locally compact. We also assume that the topology $ \mathcal{T} $ has a countable basis.  Let $\mathcal{H}$ be a sheaf of continuous real-valued functions on $ X $. The sheaf $\mathcal{H}$ is usually called a harmonic sheaf and its functions harmonic functions.
\begin{definition}
 We say that an open set $ \mathscr{V} \in \mathcal{T}$ is $\mathcal{H}$-regular if the following conditions are satisfied:\\
($ C_{1} $): $\overline{\mathscr{V}}$ is compact and $\partial \mathscr{V}\neq \varnothing$;\\
($ C_{2} $): for every continuous function $ f : \partial \mathscr{V} \longrightarrow \mathbb{R}$, there exists a unique $\mathcal{H}$-harmonic function in $\mathscr{V}$, denoted by $H^{\mathscr{V}}_{f}$, such that
	\begin{equation*}
		\lim_{x\longrightarrow y} H^{\mathscr{V}}_{f}(x)=f(y) \ \ \text{for every} \ y\in \partial \mathscr{V};  
	\end{equation*}
($ C_{3} $): For each pair $  f, g \in \mathscr{C}(\partial U)$ the condition $ f \leqslant g $ implies $ H^{\mathscr{V}}_{f} \leqslant H^{\mathscr{V}}_{g} $ in $ U $.
\end{definition}
\begin{definition}
The pair $ (X, \mathcal{H}) $ is called a nonlinear harmonic space, with respect to the sheaf $\mathcal{H}$ if the following three axioms are satisfied:\\
($ A_{1} $) The family of the $\mathcal{H}$-regular open sets is a basis for the topology of $ X$.\\
($ A_{2} $) the sheaf $\mathcal{H}$ is non degenerate: For every $ x\in X $  there exists an open neighborhood $ V $ of $ x $ and a function $ h \in \mathcal{H}(V)$ such that $ h(x) \neq 0$.\\
($ A_{3} $) Convergence property: 
for every subset $ U $ of $ X $ and every monotone locally bounded sequence $ (h_{n})_{n} $ in $\mathcal{H}(U)$ , we have $ h= \lim_{n\longrightarrow +\infty}h_{n}\in \mathcal{H}(U) $.
\end{definition}
Using the notion of traces of functions and uniform approximation in $\mathcal{W}^{1,p(.)}(\Omega)$, Baalal
and Berghout \cite{BA3} showed the following theorem:
\begin{theorem}\label{Direchle solution}
	Let $f\in \mathscr{C}(\partial\Omega)$. Then there exists a unique continuous $\mathscr{L}_{p(.)}$-harmonic extension of $f$ in $\overline{\Omega}$. 
\end{theorem}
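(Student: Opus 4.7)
The plan is to solve the Dirichlet problem first for smooth boundary data and then extend to general continuous $f$ by a stability argument based on a comparison principle. Concretely, I would extend $f$ to a function $F \in \mathscr{C}(\overline{\Omega}) \cap \mathcal{W}^{1,p(.)}(\Omega)$ using the Tietze extension theorem together with the density of smooth functions in $\mathcal{W}^{1,p(.)}(\Omega)$ (guaranteed under the log-Hölder hypothesis on $p$), and then look for a solution of the form $u = F + v$ with $v \in \mathcal{W}^{1,p(.)}_{0}(\Omega)$. By Theorem \ref{description of W^{1,p}_{0} by relative capacity} this choice of $v$ enforces $\tilde{u} = f$ $p(.)$-r.q.e. on $\partial\Omega$, which is the proper interpretation of the boundary condition at the level of traces.

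To produce $v$, I would apply the direct method of the calculus of variations to the functional
\begin{equation*}
J(w) := \int_{\Omega} \frac{1}{p(x)} \left| \nabla (F+w) \right|^{p(x)} \, dx + \int_{\Omega} \Phi(x, F(x)+w(x)) \, dx,
\end{equation*}
where $\Phi(x,\cdot)$ is a primitive of $\mathscr{B}(x,\cdot)$. The growth assumption on $\mathscr{B}$ makes $J$ well defined and coercive on $\mathcal{W}^{1,p(.)}_{0}(\Omega)$; the monotonicity of $\mathscr{B}$ together with the strict convexity of $\xi \mapsto |\xi|^{p(x)}$ yields strict convexity of $J$, so a unique minimizer exists on this reflexive space. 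This minimizer is the unique weak solution of $\mathscr{L}_{p(.)} u = 0$ in $\Omega$ with trace $f$. Interior continuity follows from standard Hölder regularity theory for $p(.)$-harmonic-type equations, and continuity at $p(.)$-regular boundary points can then be deduced from the log-Hölder assumption via barrier constructions.

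For an arbitrary $f \in \mathscr{C}(\partial\Omega)$ I would uniformly approximate $f$ by smooth data $f_n$ amenable to the previous step, obtain the corresponding solutions $u_n$, and apply a weak comparison principle (which itself follows from the monotonicity of $\mathscr{B}$ and of $\xi \mapsto |\xi|^{p(x)-2}\xi$) to obtain
\begin{equation*}
\left\| u_n - u_m \right\|_{\mathscr{C}(\overline{\Omega})} \leq \left\| f_n - f_m \right\|_{\mathscr{C}(\partial\Omega)}.
\end{equation*}
The uniform limit $u$ is then continuous on $\overline{\Omega}$, equals $f$ on $\partial\Omega$, and remains $\mathscr{L}_{p(.)}$-harmonic in $\Omega$ because weak solutions are stable under local uniform convergence. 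Uniqueness follows from the same comparison principle applied to two candidate continuous extensions sharing the boundary data. The main obstacle is the boundary continuity of the variational solution: without additional regularity of $\partial\Omega$, a minimizer need not be continuous up to $\partial\Omega$, so the technical heart of the argument is a uniform modulus of continuity at the boundary, which is precisely the point at which the citation to \cite{BA3} becomes indispensable.
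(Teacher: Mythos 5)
The paper itself does not prove this theorem: it simply attributes it to Baalal--Berghout \cite{BA3}, remarking only that the proof there uses ``the notion of traces of functions and uniform approximation in $\mathcal{W}^{1,p(.)}(\Omega)$.'' Your proposal is therefore a genuine alternative scaffolding rather than a reconstruction of the paper's argument, and as scaffolding the variational outline (extend $f$, minimize the energy over $F + \mathcal{W}^{1,p(.)}_0(\Omega)$, pass to the limit via comparison) is the natural one. However, there are real gaps.

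The central one is a circularity. The sup-norm estimate
$\left\| u_n - u_m \right\|_{\mathscr{C}(\overline{\Omega})} \le \left\| f_n - f_m \right\|_{\mathscr{C}(\partial\Omega)}$
obtained from the comparison principle already presupposes that each $u_n$ is continuous on $\overline{\Omega}$ and attains its boundary values pointwise at \emph{every} boundary point; that is precisely the content of the theorem, and it is not a consequence of the variational step. For an arbitrary bounded open set with no geometric hypothesis on $\partial\Omega$, a boundary point may fail to be Wiener-regular for the $p(.)$-Laplacian, in which case the energy minimizer does \emph{not} attain its boundary value continuously there. So the step ``interior continuity $\ldots$ boundary continuity via barriers under log-H\"older'' is not available without a regularity assumption on $\partial\Omega$; you acknowledge this and say the citation to \cite{BA3} is indispensable, but that amounts to outsourcing the entire non-variational content of the theorem to the same reference the paper cites, rather than closing the gap. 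Secondly, a smaller technical slip: you attribute coercivity of $J$ to the growth condition $|\mathscr{B}(\cdot,\zeta)|\le a(\cdot)+C|\zeta|^{p(\cdot)-1}$ alone, but integrating this gives $\Phi(x,\zeta)\ge -a(x)|\zeta|-\tfrac{C}{p(x)}|\zeta|^{p(x)}+\Phi(x,0)$, which is a \emph{critical} competitor to the gradient term; coercivity only follows once you also invoke the monotonicity of $\mathscr{B}$ (hence convexity of $\Phi$) via $\Phi(x,\zeta)\ge\Phi(x,0)+\mathscr{B}(x,0)\zeta$ together with Poincar\'e. The monotonicity you use only for strict convexity is in fact also needed for coercivity, and the growth bound by itself would not suffice.
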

Note that the comparison principle given in \cite{BA2} can be extended immediately to functions in $ \mathscr{C}(\partial \Omega)$ in the following way:
\begin{proposition}(Comparison principal).\label{Comparison principal}
Let $  f, g \in \mathscr{C}(\partial \Omega)$. If $ f \leqslant g $ on $ \partial \Omega $, then $\mathscr{L}^{\Omega}_{p(.),f}  \leqslant \mathscr{L}^{\Omega}_{p(.),g}$ in $ \Omega $.
\end{proposition}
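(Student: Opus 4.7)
The plan is to reduce the statement to the comparison principle of \cite{BA2}, which is presumably formulated for a narrower class of boundary data---typically, traces of functions in $\mathcal{W}^{1,p(.)}(\Omega)\cap \mathscr{C}(\overline{\Omega})$. The route is a two-step density argument: first, to establish Lipschitz continuity of the operator $f\mapsto \mathscr{L}^{\Omega}_{p(.),f}$ in the supremum norm on $\partial\Omega$; second, to uniformly approximate $f$ and $g$ by admissible boundary data preserving the order, apply the known comparison, and pass to the limit.

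First I would verify the Lipschitz estimate
\begin{equation*}
\left\|\mathscr{L}^{\Omega}_{p(.),f}-\mathscr{L}^{\Omega}_{p(.),h}\right\|_{L^{\infty}(\overline{\Omega})}\leqslant \|f-h\|_{L^{\infty}(\partial\Omega)}
\end{equation*}
for $f,h\in\mathscr{C}(\partial\Omega)$. By Theorem \ref{Direchle solution}, both $\mathscr{L}^{\Omega}_{p(.),f}$ and $\mathscr{L}^{\Omega}_{p(.),h}$ are continuous $\mathscr{L}_{p(.)}$-harmonic extensions of their respective boundary data. Comparing, via a weak maximum principle that is a particular case of the comparison principle of \cite{BA2}, the extension of $f$ with the extension of $h\pm \|f-h\|_{\infty}$ yields the desired bound; here we use that the constants $\pm\|f-h\|_{\infty}$ lie in $\mathcal{W}^{1,p(.)}(\Omega)\cap \mathscr{C}(\overline{\Omega})$ because $\Omega$ is bounded.

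Second, any continuous function on $\partial\Omega$ extends to $\overline{\Omega}$ via Tietze's theorem and can then be uniformly approximated by functions in $\mathcal{W}^{1,p(.)}(\Omega)\cap \mathscr{C}(\overline{\Omega})$, yielding sequences $(f_{n})$ and $(g_{n})$ of admissible boundary data converging uniformly to $f$ and $g$. To preserve the order, I would replace $(f_{n},g_{n})$ by $(f_{n}\wedge g_{n},\, f_{n}\vee g_{n})$, which remain in the admissible class because $\mathcal{W}^{1,p(.)}(\Omega)\cap \mathscr{C}(\overline{\Omega})$ is a lattice; uniform convergence to $f$ and $g$ persists because $f\leqslant g$. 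Applying the comparison of \cite{BA2} to each resulting pair yields
\begin{equation*}
\mathscr{L}^{\Omega}_{p(.),f_{n}}\leqslant \mathscr{L}^{\Omega}_{p(.),g_{n}}\quad \text{in } \Omega,
\end{equation*}
and the Lipschitz continuity from the previous step allows us to pass to the limit and conclude.

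The expected main obstacle is not the limiting procedure but the verification of the sup-norm Lipschitz continuity of the Perron-Wiener-Brelot operator, which hinges on a preliminary application of the weak maximum principle for $\mathscr{L}_{p(.)}$-harmonic functions with constant comparison data. Once this is granted---and it should be, since constants are trivially admissible and Theorem \ref{Direchle solution} supplies the required uniqueness---the density and order-preserving approximation argument proceeds routinely, which justifies the author's assertion that the extension from \cite{BA2} is immediate.
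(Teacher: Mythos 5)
The paper offers no proof of this proposition at all: it simply cites \cite{BA2} and asserts the extension to $\mathscr{C}(\partial\Omega)$ is ``immediate.'' So your job was to fill in a gap, and the density route you chose is a natural one. That said, there are two points where the argument as written is not tight.

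First, the Lipschitz estimate. Comparing $\mathscr{L}^{\Omega}_{p(.),f}$ with $\mathscr{L}^{\Omega}_{p(.),h\pm c}$ (where $c=\|f-h\|_\infty$) only yields an \emph{ordering of solutions}, namely $\mathscr{L}^{\Omega}_{p(.),h-c}\leqslant\mathscr{L}^{\Omega}_{p(.),f}\leqslant\mathscr{L}^{\Omega}_{p(.),h+c}$. This does not by itself give the numerical bound $\|\mathscr{L}^{\Omega}_{p(.),f}-\mathscr{L}^{\Omega}_{p(.),h}\|_\infty\leqslant c$, because for a nonlinear operator with a zeroth-order term there is no reason for $\mathscr{L}^{\Omega}_{p(.),h+c}$ to equal $\mathscr{L}^{\Omega}_{p(.),h}+c$. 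The missing step is to observe that $u:=\mathscr{L}^{\Omega}_{p(.),h}$ satisfies $\mathscr{L}_{p(.)}(u+c)=-\Delta_{p(.)}u+\mathscr{B}(\cdot,u+c)\geqslant-\Delta_{p(.)}u+\mathscr{B}(\cdot,u)=0$, since $\Delta_{p(.)}$ depends only on $\nabla u$ and $\zeta\mapsto\mathscr{B}(\cdot,\zeta)$ is increasing by the paper's structural hypothesis; hence $u+c$ is a supersolution with boundary data $h+c$, so $\mathscr{L}^{\Omega}_{p(.),h+c}\leqslant u+c$ by comparison of a solution with a supersolution, and symmetrically for $u-c$. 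Without that translate-is-a-supersolution observation, the Lipschitz claim is unsupported, and it is precisely here that the specific structure of $\mathscr{L}_{p(.)}$ enters.

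Second, there is a latent circularity in invoking \cite{BA2}'s comparison at the level of generality needed in Step 1. If \cite{BA2} only proves comparison for boundary data in a nice class (traces of $\mathcal{W}^{1,p(.)}(\Omega)\cap\mathscr{C}(\overline{\Omega})$), you cannot apply it to a general pair $(f,h+c)$ to prove the Lipschitz estimate for general $f,h$ — that would be the statement you are trying to establish. The correct ordering is: establish the Lipschitz estimate for nice data, then use Tietze plus Stone--Weierstrass to approximate and pass to the limit, and finally identify the uniform limit with $\mathscr{L}^{\Omega}_{p(.),f}$ via the uniqueness in Theorem~\ref{Direchle solution}. Your steps are essentially in this order but should say so explicitly, otherwise Step 1 quietly assumes the conclusion. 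With these two points repaired, the argument is sound and corresponds to what the paper's ``immediate'' extension most plausibly means.
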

Combining Theorem \ref{Direchle solution} with Proposition \ref{Comparison principal}, we obtain the following corollary.
\begin{corollary}\label{basis}
Let $\Omega\subset \mathbb{R}^{n}$ be a bounded open set, then 
$\Omega $ is $\mathcal{H}_{\mathscr{L}_{p(.)}}$-regular. Moreover, the family of the $\mathcal{H}_{\mathscr{L}_{p(.)}}$-regular open sets is a basis for the topology of $ \Omega$.  
\end{corollary}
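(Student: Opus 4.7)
The plan is to verify directly the three conditions $(C_1)$, $(C_2)$, $(C_3)$ in the definition of an $\mathcal{H}$-regular open set for $\mathscr{V}=\Omega$, leveraging Theorem \ref{Direchle solution} and Proposition \ref{Comparison principal}. The basis statement will then follow by localizing to small Euclidean balls, which are themselves bounded open sets to which the same two results apply.

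For $(C_1)$, since $\Omega$ is a nonempty bounded open set in $\mathbb{R}^{n}$, its closure $\overline{\Omega}$ is compact and its topological boundary $\partial\Omega$ is nonempty (a bounded proper open subset of $\mathbb{R}^{n}$ cannot have empty boundary). For $(C_2)$, given $f\in\mathscr{C}(\partial\Omega)$, Theorem \ref{Direchle solution} supplies a function $H^{\Omega}_{f}\in\mathscr{C}(\overline{\Omega})$ that is $\mathscr{L}_{p(.)}$-harmonic in $\Omega$ and coincides with $f$ on $\partial\Omega$; by continuity of $H^{\Omega}_{f}$ on $\overline{\Omega}$, the boundary limit condition $\lim_{x\to y}H^{\Omega}_{f}(x)=f(y)$ for every $y\in\partial\Omega$ is immediate. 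Uniqueness is extracted from Proposition \ref{Comparison principal}: if $u_{1},u_{2}$ are two continuous $\mathscr{L}_{p(.)}$-harmonic extensions of $f$, then applying the comparison principle with boundary datum $f\leq f$ in both orderings yields $u_{1}\leq u_{2}$ and $u_{2}\leq u_{1}$ on $\Omega$. Finally, $(C_3)$ is exactly the content of Proposition \ref{Comparison principal}.

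For the basis property, fix $x\in\Omega$ and let $U\subset\Omega$ be an open neighborhood of $x$ in the relative topology of $\Omega$. Choose $r>0$ small enough that $\overline{B(x,r)}\subset U$. Since $B(x,r)$ is itself a nonempty bounded open subset of $\mathbb{R}^{n}$ lying in $\Omega$, the same three-step verification (with $B(x,r)$ in place of $\Omega$ in Theorem \ref{Direchle solution} and Proposition \ref{Comparison principal}, both of which are stated for arbitrary bounded open sets under the standing hypotheses on $p(.)$ and $\mathscr{B}$) shows that $B(x,r)$ is $\mathcal{H}_{\mathscr{L}_{p(.)}}$-regular. This yields a basis of $\mathcal{H}_{\mathscr{L}_{p(.)}}$-regular open neighborhoods around each point of $\Omega$.

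The main obstacle is essentially bookkeeping rather than mathematical depth: one must ensure that the Dirichlet solvability of Theorem \ref{Direchle solution} and the comparison principle of Proposition \ref{Comparison principal} carry over verbatim to an arbitrary bounded open subset (in particular to the sub-balls $B(x,r)\subset\Omega$), so that the regularity transfers from $\Omega$ to each such ball; once this is granted, the axiom verifications $(C_1)$, $(C_2)$, $(C_3)$ reduce to direct citation and the basis statement follows instantly.
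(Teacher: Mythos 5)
Your proof is correct and takes essentially the same approach as the paper, which itself offers no written proof beyond the remark that Corollary \ref{basis} follows by ``combining Theorem \ref{Direchle solution} with Proposition \ref{Comparison principal}''; you have simply spelled out the verification of $(C_1)$--$(C_3)$ and the ball-localization for the basis claim that the paper leaves implicit. One very small remark: your derivation of uniqueness in $(C_2)$ via the comparison principle is unnecessary (Theorem \ref{Direchle solution} already asserts uniqueness directly) and, as worded, slightly circular, since Proposition \ref{Comparison principal} is phrased for the Dirichlet solutions $\mathscr{L}^{\Omega}_{p(.),f}$ rather than for arbitrary continuous $\mathscr{L}_{p(.)}$-harmonic extensions, so invoking it presupposes the identification you are trying to establish; citing the uniqueness clause of Theorem \ref{Direchle solution} outright is cleaner.
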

\begin{theorem}
Let $ \mathcal{H}_{\mathscr{L}_{p(.)}} $ be  the sheaf of $\mathscr{L}_{p(.)}$-harmonic functions. Then the pair $ (\Omega,\mathcal{H}_{\mathscr{L}_{p(.)}} )$ is a nonlinear harmonic space.
\end{theorem}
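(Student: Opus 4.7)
The plan is to verify the three axioms $(A_{1})$, $(A_{2})$, $(A_{3})$ in the definition of a nonlinear harmonic space for the sheaf $\mathcal{H}_{\mathscr{L}_{p(.)}}$. Axiom $(A_{1})$ is immediate from Corollary \ref{basis}, which already states that the family of $\mathcal{H}_{\mathscr{L}_{p(.)}}$-regular open sets is a basis for the topology of $\Omega$.

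For axiom $(A_{2})$, I would fix $x\in\Omega$, choose a small ball $V=B(x,r)$ with $\overline{V}\subset\Omega$, and invoke Corollary \ref{basis} to ensure $V$ is $\mathcal{H}_{\mathscr{L}_{p(.)}}$-regular. Applying Theorem \ref{Direchle solution} with the continuous boundary datum $f\equiv 1$ on $\partial V$ produces a function $h:=\mathscr{L}^{V}_{p(.),1}\in\mathcal{H}_{\mathscr{L}_{p(.)}}(V)$ which is continuous up to $\overline{V}$ and equals $1$ on $\partial V$. The comparison principle (Proposition \ref{Comparison principal}) applied against the boundary datum $0$, together with the strong maximum principle for $\mathscr{L}_{p(.)}$-harmonic functions in \cite{BA2}, forces $h>0$ throughout $V$; in particular $h(x)\neq 0$. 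Alternatively, shrinking $r$ and appealing to an oscillation estimate keeps $h$ close to its boundary value $1$ throughout $V$, which again yields $h(x)\neq 0$.

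The substantive part is axiom $(A_{3})$, the Brelot convergence property. Given a monotone locally bounded sequence $(h_{n})$ in $\mathcal{H}_{\mathscr{L}_{p(.)}}(U)$ and a relatively compact $V\subset\subset U$, the plan is: first, derive a Caccioppoli-type estimate by testing the weak form of $\mathscr{L}_{p(.)}(h_{n}-h_{m})=0$ against $(h_{n}-h_{m})\varphi^{p^{+}}$ for a cutoff $\varphi\in\mathscr{C}^{\infty}_{0}(U)$ with $\varphi\equiv 1$ on $V$; combined with the monotonicity of $\zeta\mapsto\mathscr{B}(\cdot,\zeta)$ and of the sequence $(h_{n})$, this gives a uniform bound on $\int_{V}|\nabla h_{n}|^{p(\cdot)}\,dx$. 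Second, pass to a subsequence along which $h_{n}\to h$ monotonically and pointwise a.e., and $\nabla h_{n}\rightharpoonup \nabla h$ weakly in $L^{p(.)}(V)$. Third, apply a Minty--Browder argument exploiting the monotonicity of both $-\Delta_{p(.)}$ and $\mathscr{B}(\cdot,\zeta)$ to identify $h$ as a weak solution of $\mathscr{L}_{p(.)}h=0$ on $U$. Finally, invoke the local $\mathscr{C}^{0,\alpha}$-regularity of bounded weak solutions established in \cite{BA3} under the log-Hölder continuity hypothesis on $p$, which promotes $h$ into $\mathscr{C}(U)$ and hence into $\mathcal{H}_{\mathscr{L}_{p(.)}}(U)$.

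The principal obstacle lies in the passage to the limit in the nonlinear principal part $-\Delta_{p(.)}h_{n}$: a priori only weak $L^{p(.)}$ convergence of the gradients is available, so the strong a.e.\ convergence of $\nabla h_{n}$ required to identify the limit as a solution must be extracted from the monotone structure of the operator via Minty's trick. Once this is established, the lower-order term behaves well by dominated convergence thanks to the growth hypothesis $|\mathscr{B}(\cdot,\zeta)|\leq a(\cdot)+C|\zeta|^{p(\cdot)-1}$, and the regularity promotion in the last step is precisely the content of the theory developed in \cite{BA3}.
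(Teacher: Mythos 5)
Your handling of $(A_{1})$ matches the paper exactly (cite Corollary \ref{basis}), but for $(A_{2})$ and especially $(A_{3})$ you take a genuinely different route.

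For $(A_{2})$, you solve a Dirichlet problem in a small ball with boundary datum $1$ and use comparison/oscillation estimates to show the solution is nonzero at $x$; the paper instead invokes a Harnack inequality from \cite{BA5} and declares non-degeneracy a consequence. Both are plausible, though you should be careful that with the lower-order term $\mathscr{B}$ present, constants need not be $\mathscr{L}_{p(.)}$-harmonic, so the comparison against the boundary datum $0$ requires either $\mathscr{B}(\cdot,0)=0$ or some maximum principle tailored to the perturbed operator; your alternative via an oscillation estimate is the safer fallback.

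For $(A_{3})$, the divergence is substantial. You run a hard-analysis argument: Caccioppoli estimate, extraction of weak $L^{p(.)}$ limits of gradients, a Minty--Browder identification of the limit as a weak solution, and then local H\"older regularity to land back in $\mathscr{C}(U)$. The paper's proof is soft: the Harnack inequality (together with local continuity estimates from \cite{BA5}) gives uniform equicontinuity of $(h_{n})$ on compacta; Ascoli--Arzel\`a then yields locally uniform convergence to a continuous $u$; finally the comparison principle (Proposition \ref{Comparison principal}) squeezes $u$ between $\mathscr{L}_{p(.)}$-harmonic functions with boundary data $u\pm\varepsilon$, and letting $\varepsilon\to 0$ identifies $u$ as $\mathscr{L}_{p(.)}$-harmonic. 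The paper's route is considerably shorter and stays entirely at the level of the axiomatic framework, whereas your route is self-contained at the PDE level but more technical. One slip in your writeup that you should correct: there is no meaningful weak form of ``$\mathscr{L}_{p(.)}(h_{n}-h_{m})=0$,'' since the operator is nonlinear and $h_{n}-h_{m}$ is not $\mathscr{L}_{p(.)}$-harmonic. The Caccioppoli step must instead test the two weak equations for $h_{n}$ and $h_{m}$ separately against $(h_{n}-h_{m})\varphi^{p^{+}}$ and subtract, using the monotonicity of $\xi\mapsto|\xi|^{p(\cdot)-2}\xi$ and of $\zeta\mapsto\mathscr{B}(\cdot,\zeta)$ to control the resulting difference.
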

\begin{proof}
By Corollary \ref{basis}, the family of the $\mathcal{H}_{\mathscr{L}_{p(.)}}$-regular open sets is a basis for the topology of $ \Omega$. From \cite{BA5}, we deduce  the  following form of the Harnack inequality: for every non empty open set $ \Omega \subset \mathbb{R}^{n}$, for every constant $ M > 0  $ and every compact $K\subset\Omega $, there exists a constants $ C_{1} $ and $ C_{2} $ such hat
\begin{equation*}
\text{esssup}_{x\in K}u(x)\leqslant C_{1} (\text{essinf}_{x\in K}u(x)+C_{2}),
\end{equation*}
for every positive $ u $ in $ \mathcal{H}_{\mathscr{L}_{p(.)}}(\Omega) $ with $  u \leqslant M $. It follows that the sheaf $ \mathcal{H}_{\mathscr{L}_{p(.)}}(\Omega) $ is non degenerate.\\
Let $ (u_{n})_{n} $ be a locally bounded increasing sequence in $ \mathcal{H}_{\mathscr{L}_{p(.)}}(\Omega) $. Then by Harnack inequality the family 
\begin{equation*}
\mathcal{F}:=\left\lbrace u_{n}(x), x\in \overline{V},n\in \mathbb{N}, \ \text{for every } \ V\subset \overline{V} \subset \Omega   \right\rbrace 
\end{equation*}  
is uniformly equicontinuous on $\overline{V} $ and it is a bounded subset of $ \mathscr{C}(\overline{V})$. By Ascoli–Arzelà theorem $\mathcal{F} $ has compact closure in $ \mathscr{C}(\overline{V})$; consequently the sequence $ (u_{n} )_{n}$ converges locally and uniformly in $ \Omega $ to a continuous function $ u $. Hence, for every $ \varepsilon >0 $, there exists $ n_{0}\in \mathbb{N}$ such that:
\begin{equation*}
(\forall n\geq n_{0}); \sup_{x\in \overline{V}}\left|u_{n}(x)-u(x) \right|\leq \varepsilon.  
\end{equation*}    
By the comparison principal
 \begin{equation*}
 \mathscr{L}_{p(.)}u-\varepsilon\leq u_{n}\leq \mathscr{L}_{p(.)}u+\varepsilon,
 \end{equation*}    
letting $ n\longrightarrow +\infty $, we get 
\begin{equation*}
\mathscr{L}_{p(.)}u-\varepsilon\leq u\leq \mathscr{L}_{p(.)}u+\varepsilon.
\end{equation*}
Now, letting $ \varepsilon\longrightarrow 0$, we get $\mathscr{L}_{p(.)}u=u$. Hence the space $ (\Omega,\mathcal{H}_{\mathscr{L}_{p(.)}} )$ satisfies the convergence property.    
\end{proof}
Because of the nonlinearity of the sheaf 
$\mathcal{H}_{\mathscr{L}_{p(.)}}$ there is no standard connection between the Dirichlet problem and harmonic measure as in linear axiomatic potential theories.\\    
Notice that by the uniqueness of the solution, we deduce that the $\mathscr{L}_{p(.)}$-harmonic function and the solution given by Brelot-Perron-Wiener method coincide. In this case the space of continuous functions on $\partial\Omega$ is resolutive. Hence, in a nonlinear harmonic space $ (\Omega,\mathcal{H}_{\mathscr{L}_{p(.)}} )$ we construct the Perron–Wiener– Brelot operator
\begin{equation*}
f\mapsto \mathscr{L}^{\Omega}_{p(.),f} 	
\end{equation*}
from a set of real extended functions $ f : \partial\Omega \longrightarrow \mathbb{R}$ to the nonlinear harmonic space $ (\Omega,\mathcal{H}_{\mathscr{L}_{p(.)}} )$ of the $ \mathscr{L}_{p(.)}$-harmonic functions in $ \Omega $.
  
The following theorem suggests that the mapping $ f\mapsto {\mathscr{L}^{\Omega}_{p(.),f}}$ from $ \mathscr{C}(\partial\Omega) $ into $ \mathcal{H}_{\mathscr{L}_{p(.)}} $ is injective whenever $ \Omega $ is $p(.)$-regular in capacity.  
\begin{theorem}
Let $\Omega\subset \mathbb{R}^{n}$ be a bounded open set. Assume that $ \Omega $ is $p(.)$-regular in capacity. Then the mapping $ f\mapsto \mathscr{L}^{\Omega}_{p(.),f}$ from $ \mathscr{C}(\partial\Omega) $ into $ \mathcal{H}_{\mathscr{L}_{p(.)}}(\Omega) $ is injective.
\end{theorem}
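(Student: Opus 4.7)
The strategy is to reduce injectivity of $f\mapsto\mathscr{L}^{\Omega}_{p(.),f}$ to the $p(.)$-zero property, namely to the implication $(6)$ in Theorem \ref{equivalece p capacity}. Fix $f,g\in\mathscr{C}(\partial\Omega)$ with $\mathscr{L}^{\Omega}_{p(.),f}=\mathscr{L}^{\Omega}_{p(.),g}$ and set $h:=f-g\in\mathscr{C}(\partial\Omega)$; the goal is to prove $h\equiv 0$.

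First I would invoke Theorem \ref{Direchle solution} to produce, for each of $f$ and $g$, a continuous $\mathscr{L}_{p(.)}$-harmonic extension on $\overline{\Omega}$. Using the comparison principle (Proposition \ref{Comparison principal}), this extension is squeezed between the upper and lower Perron classes defining the PWB solution, and hence coincides on $\Omega$ with the respective PWB solution. Since the two PWB solutions are equal by hypothesis, the continuous extension $U\in\mathscr{C}(\overline{\Omega})$ is one and the same for both $f$ and $g$, and at every $\mathscr{L}_{p(.)}$-regular boundary point $y$ one has $U(y)=f(y)=g(y)$, so $h$ vanishes at every regular boundary point of $\partial\Omega$.

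Next I would apply the Kellogg-type theorem from axiomatic nonlinear potential theory, which asserts that the set $I\subset\partial\Omega$ of irregular boundary points is $p(.)$-relatively polar, i.e.\ $C^{\overline{\Omega}}_{p(.)}(I)=0$. In the present framework this follows from a Wiener-type criterion for $\mathscr{L}_{p(.)}$-regularity built on the Harnack inequality recalled in \cite{BA5} together with the barrier construction in \cite{BA3}; cf.\ \cite{Heinonen} for the corresponding result in the constant-exponent case. Consequently $h=0$ $p(.)$-relatively quasieverywhere on $\partial\Omega$.

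Finally, since $\Omega$ is $p(.)$-regular in capacity, assertion $(6)$ of Theorem \ref{equivalece p capacity}, applied to $h\in\mathscr{C}(\partial\Omega)$, yields $h(x)=0$ for every $x\in\partial\Omega$, i.e.\ $f=g$, establishing injectivity. The main obstacle is the Kellogg/Wiener step: verifying that the irregular set has zero relative $p(.)$-capacity is where the variable exponent and nonlinearity intervene, and where the logarithmic Hölder assumption on $p(\cdot)$ together with the Harnack inequality of \cite{BA5} and the resolutivity results of \cite{BA,BA2,BA3} are essential; once it is granted, the rest of the argument reduces cleanly to the boundary characterizations already established in Section 3.
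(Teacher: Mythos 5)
Your overall route matches the paper's: reduce to showing $h:=f-g=0$ $p(.)$-r.q.e.\ on $\partial\Omega$, and then invoke assertion~(6) of Theorem \ref{equivalece p capacity} (valid because $\Omega$ is $p(.)$-regular in capacity) to upgrade to $h\equiv 0$ on $\partial\Omega$. That final step is identical in both arguments. Where you diverge is in how you obtain the r.q.e.\ vanishing of $h$. You pass through a regular/irregular boundary decomposition and a Kellogg-type theorem asserting that the irregular set is $p(.)$-relatively polar. This is a genuine detour, and it introduces a gap: a Kellogg property for the operator $\mathscr{L}_{p(.)}$ in this axiomatic, variable-exponent, perturbed setting is not among the results established in the paper, nor is it proved in \cite{BA3} or \cite{BA5} as you suggest; you in effect state it as an article of faith ("once it is granted\dots"). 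The paper avoids this entirely by leaning directly on Theorem \ref{Direchle solution}, which already asserts that the unique continuous $\mathscr{L}_{p(.)}$-harmonic extension of $f$ lies in $\mathscr{C}(\overline{\Omega})$; identifying this extension with the PWB solution $\mathscr{L}^{\Omega}_{p(.),f}$ (as the paper does via the uniqueness/comparison discussion preceding the construction of the PWB operator) yields that $\mathscr{L}^{\Omega}_{p(.),f}$ has continuous boundary trace $f$, whence $\mathscr{L}^{\Omega}_{p(.),f}=\mathscr{L}^{\Omega}_{p(.),g}$ forces $f=g$ r.q.e.\ on $\partial\Omega$ without any excursion into Wiener criteria. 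So your proof is structurally faithful but, in practice, replaces an available black box (Theorem \ref{Direchle solution}) by an unavailable one (Kellogg); to make it rigorous you would either have to actually prove the Kellogg property here, or retreat to citing Theorem \ref{Direchle solution} as the paper does, at which point the regular/irregular dichotomy becomes superfluous.
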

\begin{proof}
Let $ f,g\in \mathscr{C}(\partial\Omega)$ and assume that $ \mathscr{L}^{\Omega}_{p(.),f}=\mathscr{L}^{\Omega}_{p(.),g}$. By Theorem \ref{Direchle solution}, $ \mathscr{L}^{\Omega}_{p(.),f},\mathscr{L}^{\Omega}_{p(.),g} \in \mathscr{C}(\overline{\Omega})$. Hence 
\begin{equation*}
\mathscr{L}^{\Omega}_{p(.),f}=f \ p(.)\text{-r.q.e on} \ \partial\Omega   
\end{equation*}
and
\begin{equation*}
\mathscr{L}^{\Omega}_{p(.),g}=g  \ p(.)\text{-r.q.e on} \ \partial\Omega.  
\end{equation*}
Again by Theorem \ref{Direchle solution}, we deduce that 
\begin{equation*}
f-g=0  \ p(.)\text{-r.q.e on} \ \partial\Omega.  
\end{equation*}
Since $ \Omega $ is $p(.)$-regular in capacity, it follows from Theorem \ref{equivalece p capacity} that $ f-g=0$ everywhere on $ \partial\Omega $,  
which implies that $ f=g $  everywhere on $ \partial\Omega $. Hence the mapping $ f\mapsto \mathscr{L}^{\Omega}_{p(.),f}$ from $ \mathscr{C}(\partial\Omega) $ into $ \mathcal{H}_{\mathscr{L}_{p(.)}}(\Omega) $ is injective.       
\end{proof}
\begin{corollary}
Let $\Omega\subset \mathbb{R}^{n}$ be a bounded open set. Let $\mathscr{R}(\partial\Omega)$ the class of all real valued resolutive function on 
$\partial\Omega$ and $f,g\in  \mathscr{R}(\partial\Omega)$. Suppose that $ \Omega $ is $p(.)$-regular in capacity. If 
$\mathscr{L}^{\Omega}_{p(.),f}=\mathscr{L}^{\Omega}_{p(.),g}$, then the set $ \left\lbrace x\in \partial \Omega: f(x)\neq g(x)\right\rbrace$ is negligible.   
\end{corollary}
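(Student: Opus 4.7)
The plan is to show that the exceptional set $N := \{x \in \partial\Omega : f(x) \neq g(x)\}$ has $p(.)$-relative capacity zero, which I take as the working definition of ``negligible'' in this potential-theoretic framework. The argument will parallel the proof of the preceding injectivity theorem, but I must replace the (automatic) continuous boundary attainment available for $f,g\in \mathscr{C}(\partial\Omega)$ with a Kellogg-type boundary attainment valid for merely resolutive data.

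First, I would invoke from \cite{BA3}, together with the sheaf structure established above, a Kellogg-type statement of the following form: for any resolutive $h \in \mathscr{R}(\partial\Omega)$, the PWB solution $\mathscr{L}^{\Omega}_{p(.),h}$ attains $h$ as a boundary trace $p(.)$-r.q.e.\ on $\partial\Omega$, in the sense that its $p(.)$-fine boundary limit exists and equals $h(y)$ for every $y \in \partial\Omega$ outside a $p(.)$-relatively polar set. The ingredients will be the convergence axiom for $\mathcal{H}_{\mathscr{L}_{p(.)}}$ just verified, the basis of $\mathcal{H}_{\mathscr{L}_{p(.)}}$-regular open sets from Corollary~\ref{basis}, the extended comparison principle of Proposition~\ref{Comparison principal}, and the $p(.)$-fine continuity of equivalence classes of Sobolev functions from Theorem~\ref{p(.)-fine continuity}, which controls the boundary behaviour of the capacitary barriers constructed from the upper and lower Perron envelopes.

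Once the Kellogg-type step is in place, the rest is formal. Applying that statement to $f$ and to $g$, the hypothesis $\mathscr{L}^{\Omega}_{p(.),f} = \mathscr{L}^{\Omega}_{p(.),g}$ immediately gives $f = g$ $p(.)$-r.q.e.\ on $\partial\Omega$, so $N$ is contained in a $p(.)$-relatively polar subset of $\partial\Omega$, hence negligible. The $p(.)$-regularity in capacity hypothesis enters via Theorem~\ref{equivalece p capacity}, which ensures that the $p(.)$-relative capacity on $\partial\Omega$ is non-degenerate at every boundary point and thereby makes ``polar $=$ negligible'' the right identification; without it the statement could trivialise on pieces of $\partial\Omega$ of zero capacity.

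The main obstacle is precisely the Kellogg-type step: in the nonlinear, variable-exponent setting the classical Brelot barrier argument is not directly available, so one must assemble local barriers at each boundary point from the upper and lower Perron envelopes of $h$, use the comparison principle to squeeze them, and then estimate the set of irregular points by a capacitary Wiener-type sum, where the summands are relative $p(.)$-capacities of boundary annuli. Controlling that exceptional set in terms of $C_{p(.)}^{\overline{\Omega}}$ (rather than in terms of harmonic measure, which is not naturally available here) is the delicate point; everything downstream is a one-line comparison.
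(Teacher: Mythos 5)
The paper provides no proof of this corollary; it is stated and left unproved, so there is no argument of the author's to compare your route against, and the proposal has to stand on its own. Taken that way, your skeleton is the natural one — a Kellogg-type boundary-attainment theorem for resolutive data, followed by a one-line comparison — but the central step is exactly what is not available. Theorem \ref{Direchle solution} (from \cite{BA3}) gives continuous boundary attainment only for $f\in\mathscr{C}(\partial\Omega)$, and nothing in the paper or its bibliography establishes that the Perron envelope $\mathscr{L}^{\Omega}_{p(.),h}$ of a merely resolutive $h$ attains $h$ as a fine or radial limit outside a $p(.)$-relatively polar subset of $\partial\Omega$ for the perturbed operator $\mathscr{L}_{p(.)}$. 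You are honest that this is the obstacle, but the remedy you describe (assemble barriers from the upper and lower Perron envelopes, control the irregular set by a capacitary Wiener-type sum) is a research program rather than a proof; in the constant-exponent $p$-harmonic setting the Kellogg property is itself a substantial theorem, and its variable-exponent, lower-order-perturbed analogue is nowhere on the table here.

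Two secondary gaps. First, the paper never defines ``negligible,'' and you simply take it to mean ``$p(.)$-relatively polar.'' That identification is plausible, but in nonlinear axiomatic potential theory ``negligible'' is usually defined via the upper Perron solution of a characteristic function, and the paper explicitly warns that there is no harmonic-measure substitute in this nonlinear setting; so equating ``negligible'' with ``capacity zero'' is itself a Kellogg-type statement you have assumed rather than proved. Second, your explanation of the role of the $p(.)$-regularity-in-capacity hypothesis is unconvincing. In the preceding theorem that hypothesis is used, via Theorem \ref{equivalece p capacity}, to upgrade ``$f-g=0$ r.q.e.\ on $\partial\Omega$'' to ``$f-g=0$ everywhere'' — an upgrade possible because $f-g$ is continuous. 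For resolutive $f,g$ no such upgrade is claimed; the corollary's conclusion is exactly the r.q.e.-type statement, so as you have set things up the regularity hypothesis does no visible work. If it is genuinely needed, it must enter inside the unproved Kellogg step, and you would need to say how.
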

 
%The most important tool in linear axiomatic potential theories is harmonic measure. More precisely, let $ (X, \mathcal{H}) $ be a Brelot harmonic space, $ V $ an open set with compact closure and $ f\in \mathscr{C}(\partial V) $. If the $ \mathcal{H}-$Dirichlet problem
%\begin{equation*}	
%	\left\{
%	\begin{array}{rl}	
	%	u\in \mathcal{H}(V) \\
	%	\lim_{x\longrightarrow y}u(x)=f(y) & \text{for all}\ y\in\partial V, 
%	\end{array}
%	\right.
%\end{equation*}
%has a solution $ u=H_{f}^{V}$, then for every $ \mathcal{H}-$regular open set $ V $ and for every $ x \in V $, there exists a $ \mathcal{H}-$harmonic measure $ \mu_{x}^{V} $ on 
%$\mathscr{C}(V,\mathbb{R})$ such that  
 %\begin{equation*}
% H_{f}^{V}(x)=\int_{\partial V}f(y) \ d\mu_{x}^{V},	
%\end{equation*}
%for all $ f\in \mathscr{C}(V,\mathbb{R})$.

\bibliographystyle{amsplain}

\end{document}